\newtheorem{theorem}{Theorem}[section]
\newtheorem{proposition}[theorem]{Proposition}
\newtheorem{lemma}[theorem]{Lemma}
\theoremstyle{definition}
\newtheorem{corollary}[theorem]{Corollary}
\newtheorem{remark}[theorem]{Remark}
\definecolor{darkblue}{rgb}{0.0,0.0,0.4}
\newcommand\eps{\varepsilon}
\newcommand\diam{\mathrm{diam}}
\newcommand\dom{\mathrm{dom}}
\newcommand\R{\mathbb{R}}
\newcommand\C{\mathbb{C}}
\title{Regularity of solutions of the Navier--Stokes-$\alpha\beta$ equations\\
with wall-eddy boundary conditions}
\author[1]{Nella Rotundo}
\author[2,3,4]{Gantumur Tsogtgerel}
\affil[1]{University of Florence}
\affil[2]{McGill University}
\affil[3]{National University of Mongolia}
\affil[4]{Institute of Mathematics and Digital Technology, Mongolian Academy of Sciences}
\date{\today}
\begin{document}

\maketitle

%%%%%%%%%%%%%%%%%%%%%%%%%%%%%%%%%%%%%%%%%%%%%%%%%%%%%%%%%%%%%%%%%%%%%%%%%%%%%%
\begin{abstract}
We establish global well-posedness and regularity for the Navier--Stokes--$\alpha\beta$ system endowed with the wall--eddy boundary conditions proposed by Fried and Gurtin (2008). 
These conditions introduce a tangential vorticity traction proportional to wall vorticity and provide a continuum-mechanical model for near-wall turbulence. 
Our analysis begins with a variational formulation of the stationary fourth-order system, where we prove symmetry and a G{\aa}rding inequality for the associated bilinear form. 
We then verify Douglis--Nirenberg ellipticity and the Lopatinskii--Shapiro covering condition, establishing full Agmon--Douglis--Nirenberg regularity for the coupled system. 
Building on this framework, we derive a hierarchy of energy estimates for the nonlinear evolution equation, which yields global regularity, uniqueness, and stability. 
To our knowledge, this provides the first complete analytical treatment of the wall--eddy boundary model of Fried and Gurtin.
\end{abstract}

\tableofcontents

%%%%%%%%%%%%%%%%%%%%%%%%%%%%%%%%%%%%%%%%%%%%%%%%%%%%%%%%%%%%%%%%%%%%%%%%%%%%%%
\section{Introduction}

Regularized models of incompressible flow add length scales and smoothing operators to the Navier--Stokes equations, stabilizing dynamics while preserving large-scale features. 
Classical $\alpha$-type models  such as Navier--Stokes--$\alpha$ and Leray--$\alpha$ are well understood under periodic or homogeneous Dirichlet boundary conditions; see, e.g., the abstract framework of Holst--Lunasin--Tsogtgerel~\cite{HLT10}. These settings, however, do not directly encode near-wall eddy physics.

Fried and Gurtin~\cite{FriedGurtin08,FriedGurtin11} proposed a two-length-scale theory in which a bulk filter scale $\alpha$ is coupled with a wall eddy scale $\beta$, accompanied by a tensorial boundary condition prescribing tangential vorticity traction proportional to the wall vorticity. Despite the compelling continuum mechanical derivation, a rigorous PDE analysis appears to be missing. The present paper fills this gap.

The model under consideration is called the {\em Navier-Stokes-$\alpha\beta$} (NS-$\alpha\beta$) equations, which are given in a bounded smooth domain $\Omega \subset \R^3$ by
\begin{equation}
\partial_t v - \Delta (1-\beta^2\Delta)u + (\nabla v) u + (\nabla u)^\top v + \nabla p = 0,
\end{equation}
where the filtered velocity $v$ is related to the fluid velocity $u$ by
\begin{equation}
v=(1-\alpha^2\Delta)u, \qquad \nabla\cdot u=0,
\end{equation}
with constants satisfying $\alpha>\beta>0$, see \cite[Eq.\thinspace(204)]{FriedGurtin08}. 
This system is coupled with the {\em wall-eddy boundary conditions}
\begin{equation}
u=0 \quad \text{and} \quad
\beta^2(1-n\otimes n)\left(\nabla\omega + \gamma (\nabla\omega)^\top\right)n = \ell \omega \quad \text{on } \partial\Omega,
\end{equation}
where $\omega = \nabla\times u$ is the vorticity, $n$ is the outward unit normal at $\partial\Omega$, and the parameters satisfy $|\gamma|\leq1$ and $\ell>0$, 
see \cite[Eq.\thinspace(203)]{FriedGurtin08}, corrected in \cite[Eq.\thinspace(C20)]{FriedGurtin11}.
We use the convention $(\nabla u)_{ij} = \partial_j u_i$, so that
$(\nabla u)v = (v\cdot\nabla)u$ for vector fields $u$ and $v$.

Our primary goal is to establish the global existence and regularity of solutions. We begin by studying the spatial principal part of the system, which is the linear stationary problem:
\begin{equation}\label{e:lin-stat}
\Delta^2u+\nabla p = f, \qquad \nabla\cdot u=0, \qquad \text{subject to the boundary conditions.}
\end{equation}
In \S\ref{s:lin} we introduce a pressure-free variational formulation of this linear problem, 
prove a G{\aa}rding-type inequality, and reconstruct the pressure by a de Rham type argument. 
We then place it in the Agmon--Douglis--Nirenberg framework to derive regularity statements, verifying both ellipticity and the covering condition.
This is the subject of \S\ref{s:reg}.
Finally, in \S\ref{s:evol}, we turn to the nonlinear evolution problem, and establish global well-posedness and regularity via a hierarchy of energy estimates.

%%%%%%%%%%%%%%%%%%%%%%%%%%%%%%%%%%%%%%%%%%%%%%%%%%%%%%%%%%%%%%%%%%%%%%%%%%%%%%
\section{The linear stationary problem}
\label{s:lin}

\subsection{Preliminary properties of the bilinear form}

In order to analyse the linear stationary problem associated with the
NS–$\alpha\beta$ model we first introduce the bilinear form that encodes both
the interior dissipation and the wall–eddy boundary contribution.  
We define the relevant function
spaces and then derive several equivalent expressions for the form.  These
identities will be used repeatedly later.

Let $\Omega\subset\mathbb{R}^3$ be a bounded $C^2$ domain with outward unit normal $n$ at the boundary. 
The usual Sobolev spaces are denoted by $H^s(\Omega)$. 
With
\begin{equation}
\mathcal{V}=\{u\in \mathcal{D}(\Omega)^3:\nabla\cdot u=0\} ,
\end{equation}
we define $V$ to be the closure of $\mathcal{V}$ in $H^1(\Omega)^3$, and set $V^s=V\cap H^s(\Omega)^3$. 
Then we define the bilinear form $a$ on $H^2(\Omega)^3$ by
\begin{equation}
a(u,\phi) = \int_\Omega G:\nabla(\nabla\times\phi) + k \int_{\partial\Omega}(n\times\omega)\cdot\partial_n\phi.
\end{equation}
where 
$G = \nabla\omega + \gamma (\nabla\omega)^\top$,
$\omega = \nabla\times u$,
and $k=\ell/\beta^2$.

The next lemma rewrites $a(u,\phi)$ by repeated integration by parts.  The
first identity isolates the bulk operator $\Delta^2-\nabla\Delta(\nabla\cdot)$
acting on $u$, while the second form is more convenient for symmetric
estimates and for the G{\aa}rding inequality in \S\ref{ss:garding}.

\begin{lemma}
For $u,\phi\in C^\infty(\bar\Omega)^3$ with $\phi|_{\partial\Omega}=0$, we have
\begin{equation}\label{e:a-strong}
a(u,\phi) = \int_\Omega \big(\Delta^2u-\nabla\Delta(\nabla\cdot u)\big)\cdot\phi
   + \int_{\partial\Omega} (kn\times\omega-n\times Gn) \cdot \partial_n\phi ,
\end{equation}
and also
\begin{equation}\label{e:a-bilap}
\begin{split}
a(u,\phi) &= \int_\Omega \big(\Delta u\cdot\Delta\phi-\nabla(\nabla\cdot u)\cdot\nabla(\nabla\cdot\phi)\big)
   + \int_{\partial\Omega} (kn\times\omega-n\times Gn) \cdot \partial_n\phi \\
   &\quad-\int_{\partial\Omega} \Delta u\cdot\partial_n\phi  + \int_{\partial\Omega} \partial_n(\nabla\!\cdot u)(\nabla\!\cdot\phi) .
\end{split}
\end{equation}
\end{lemma}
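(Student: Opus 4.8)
\emph{Proof strategy.} The plan is to derive both identities directly from the definition of $a$ by repeated integration by parts, the only real work being to keep track of which boundary contributions survive the hypothesis $\phi|_{\partial\Omega}=0$.

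First I would integrate by parts once in the bulk integral. Writing $\psi=\nabla\times\phi$ and applying Green's formula componentwise to $\int_\Omega G:\nabla\psi=\int_\Omega G_{ij}\partial_j\psi_i$ gives
\[
\int_\Omega G:\nabla(\nabla\times\phi)=-\int_\Omega(\nabla\!\cdot G)\cdot(\nabla\times\phi)+\int_{\partial\Omega}(Gn)\cdot(\nabla\times\phi),
\]
with $(\nabla\!\cdot G)_i=\partial_jG_{ij}$. Since $\nabla\!\cdot\omega=\nabla\!\cdot(\nabla\times u)=0$, the $\gamma$-part of $G$ contributes nothing to this divergence and $\nabla\!\cdot G=\Delta\omega$; thus $\gamma$ leaves the interior entirely and is retained only through $G$ in the boundary term.

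Next I would invoke the curl identity $\nabla\!\cdot(a\times b)=(\nabla\times a)\cdot b-a\cdot(\nabla\times b)$ with $a=\Delta\omega$ and $b=\phi$. Because $\phi$ vanishes on $\partial\Omega$, this yields $-\int_\Omega\Delta\omega\cdot(\nabla\times\phi)=-\int_\Omega(\nabla\times\Delta\omega)\cdot\phi$, and since $\Delta$ commutes with $\curl$ and $\nabla\times\nabla\times u=\nabla(\nabla\!\cdot u)-\Delta u$, one has $\nabla\times\Delta\omega=\Delta(\nabla\times\nabla\times u)=\nabla\Delta(\nabla\!\cdot u)-\Delta^2u$, which produces exactly the bulk operator in \eqref{e:a-strong}. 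For the remaining boundary integral I would note that $\phi|_{\partial\Omega}=0$ forces the tangential derivatives of $\phi$ to vanish on $\partial\Omega$, so $\nabla\phi=n\otimes\partial_n\phi$ and hence $\nabla\times\phi=n\times\partial_n\phi$ there; the triple-product identity $(Gn)\cdot(n\times\partial_n\phi)=((Gn)\times n)\cdot\partial_n\phi=-(n\times Gn)\cdot\partial_n\phi$ then combines with the original term $k\int_{\partial\Omega}(n\times\omega)\cdot\partial_n\phi$ to give \eqref{e:a-strong}.

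To pass from \eqref{e:a-strong} to \eqref{e:a-bilap} I would integrate the bulk term of \eqref{e:a-strong} by parts twice more. Green's second identity applied to $\int_\Omega\Delta^2u\cdot\phi$ leaves $\int_\Omega\Delta u\cdot\Delta\phi-\int_{\partial\Omega}\Delta u\cdot\partial_n\phi$, the term $\int_{\partial\Omega}\partial_n(\Delta u)\cdot\phi$ dropping because $\phi|_{\partial\Omega}=0$; likewise, with $q=\nabla\!\cdot u$, integrating $-\int_\Omega\nabla\Delta q\cdot\phi$ by parts twice leaves $-\int_\Omega\nabla q\cdot\nabla(\nabla\!\cdot\phi)+\int_{\partial\Omega}\partial_nq\,(\nabla\!\cdot\phi)$, the intermediate term $\int_{\partial\Omega}\Delta q\,(n\cdot\phi)$ again vanishing. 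Here I would stress that, in contrast to $\nabla\times\phi$, the scalar $\nabla\!\cdot\phi=n\cdot\partial_n\phi$ need not vanish on $\partial\Omega$, so that last boundary integral genuinely persists; adding back the boundary term of \eqref{e:a-strong} gives \eqref{e:a-bilap}. The computation is elementary throughout, and the only step that genuinely demands care — and the most likely place for a sign slip — is telling apart the boundary contributions that carry a bare $\phi$ or a tangential derivative of $\phi$, which vanish, from those that survive as $\partial_n\phi$ or $\nabla\!\cdot\phi$ on $\partial\Omega$. No approximation or density argument is needed, since both $u$ and $\phi$ are assumed smooth up to the boundary.
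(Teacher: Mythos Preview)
Your proposal is correct and follows essentially the same route as the paper's proof: a first integration by parts to bring out $\nabla\!\cdot G=\Delta\omega$, a second to move the curl onto $\phi$ (using $\phi|_{\partial\Omega}=0$), the identity $\nabla\times\phi=n\times\partial_n\phi$ on the boundary, and then two further Green formulas on the volume terms to pass from \eqref{e:a-strong} to \eqref{e:a-bilap}. The only cosmetic difference is that you phrase the second step via the vector identity $\nabla\!\cdot(a\times b)=(\nabla\times a)\cdot b-a\cdot(\nabla\times b)$, whereas the paper writes it as $\int_\Omega(\nabla\times\phi)\cdot a=\int_\Omega\phi\cdot(\nabla\times a)+\int_{\partial\Omega}(n\times\phi)\cdot a$; these are the same integration by parts.
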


\begin{proof}
For $u,\phi\in C^\infty(\bar\Omega)^3$ we have
\begin{equation}
\int_\Omega G : \nabla(\nabla\times\phi)
= - \int_\Omega (\nabla\cdot G)\cdot(\nabla\times\phi)
   + \int_{\partial\Omega} (G n)\cdot(\nabla\times\phi) ,
\end{equation}
and
\begin{equation}
- \int_\Omega (\nabla\cdot G)\cdot(\nabla\times\phi)
= - \int_\Omega \big(\nabla\times(\nabla\cdot G)\big)\cdot\phi
   + \int_{\partial\Omega} \big(n\times(\nabla\cdot G)\big)\cdot\phi .
\end{equation}
Here and in the following, we view a matrix field $G=(G_{ij})$ as a collection of row vectors, and use
the standard PDE convention
\begin{equation}
(\nabla\cdot G)_i = \sum_{j=1}^3 \partial_j G_{ij}.
\end{equation}
This convention ensures the product rule
$\nabla\cdot(Gv) = (\nabla\cdot G)\cdot v + G : \nabla v$
and is consistent with $\nabla\cdot(\nabla u)=\Delta u$.
Thus for $u,\phi\in C^\infty(\bar\Omega)^3$ with $\phi|_{\partial\Omega}=0$, we infer
\begin{equation}
\int_\Omega G : \nabla(\nabla\times\phi) = - \int_\Omega \big(\nabla\times(\nabla\cdot G)\big)\cdot\phi
   + \int_{\partial\Omega} (G n)\cdot(\nabla\times\phi) .
\end{equation}
For the volume term, we further compute
\begin{equation}
\nabla\cdot G = \Delta\omega + \gamma\nabla(\nabla\cdot\omega) = \Delta(\nabla\times u) = \nabla\times\Delta u ,
\end{equation}
where we have taken into account that $\nabla\cdot\omega=\nabla\cdot(\nabla\times u)=0$.
This yields
\begin{equation}
\nabla\times(\nabla\cdot G) 
= \nabla\times(\nabla\times\Delta u)
= \nabla(\nabla\cdot\Delta u)- \Delta^2u 
= \nabla\Delta(\nabla\cdot u)- \Delta^2u .
\end{equation}
Now we deal with the boundary term. 
Since $\phi=0$ on $\partial\Omega$, all tangential derivatives of $\phi$ vanish
along the boundary, and one checks directly that
\begin{equation}
\nabla\times\phi = n\times\partial_n\phi\qquad\text{on }\partial\Omega.
\end{equation}
Hence, using the identity $(a\times b)\cdot c
=(c\times a)\cdot b$ with $a=n$, $b=\partial_n\phi$, $c=Gn$, we obtain
\begin{equation}\label{eq:boundary-Gn-curlphi}
(Gn)\cdot(\nabla\times\phi)
=(Gn)\cdot(n\times\partial_n\phi)
=-(n\times Gn)\cdot\partial_n\phi
\qquad\text{on }\partial\Omega.
\end{equation}
In particular,
\begin{equation}\label{eq:boundary-Gn-curlphi-int}
\int_{\partial\Omega} (Gn)\cdot(\nabla\times\phi)
=-\int_{\partial\Omega} (n\times Gn)\cdot\partial_n\phi.
\end{equation}

To obtain \eqref{e:a-bilap}, we integrate by parts in the volume terms of
\eqref{e:a-strong}.  Writing $f=\Delta u$, we have
\begin{equation}
\int_\Omega \Delta^2 u\cdot\phi
 = \int_\Omega \Delta f_i\,\phi_i
 = -\int_\Omega \nabla f_i\cdot\nabla\phi_i
   + \int_{\partial\Omega} \partial_n f_i\,\phi_i,
\end{equation}
where the Einstein summation convention is assumed (repeated indices are summed over).
Since $\phi|_{\partial\Omega}=0$, the boundary term vanishes, and one more
integration by parts yields
\begin{equation}
\int_\Omega \Delta^2 u\cdot\phi
 = \int_\Omega f_i\,\Delta\phi_i
   - \int_{\partial\Omega} f_i\,\partial_n\phi_i
 = \int_\Omega \Delta u\cdot\Delta\phi
   - \int_{\partial\Omega} \Delta u\cdot\partial_n\phi.
\end{equation}
For the second volume term, let $h=\nabla\cdot u$ and $g=\Delta h$.  Then
\begin{equation}
-\int_\Omega \nabla\Delta(\nabla\cdot u)\cdot\phi
 = -\int_\Omega \nabla g\cdot\phi
 = \int_\Omega g\,(\nabla\cdot\phi)
   - \int_{\partial\Omega} g\,(\phi\cdot n),
\end{equation}
and again $\phi|_{\partial\Omega}=0$ removes the boundary term.  Using the
scalar Green identity once more,
\begin{equation}
\int_\Omega \Delta h\,(\nabla\cdot\phi)
 = -\int_\Omega \nabla h\cdot\nabla(\nabla\cdot\phi)
   + \int_{\partial\Omega} \partial_n h\,(\nabla\cdot\phi),
\end{equation}
so that
\begin{equation}
-\int_\Omega \nabla\Delta(\nabla\cdot u)\cdot\phi
 = -\int_\Omega \nabla(\nabla\cdot u)\cdot\nabla(\nabla\cdot\phi)
   + \int_{\partial\Omega} \partial_n(\nabla\cdot u)\,(\nabla\cdot\phi).
\end{equation}
Substituting these expressions into \eqref{e:a-strong} gives \eqref{e:a-bilap}.
\end{proof}

\subsection{Weak and strong solutions}
\label{ss:weak-strong}

In this subsection we relate the variational structure induced by $a(\cdot,\cdot)$
to the stationary fourth–order system \eqref{e:lin-stat}. We first specify a pressure–free
weak formulation and then show how to reconstruct the pressure and the strong
boundary conditions from such a solution.

Our proposed weak formulation of \eqref{e:lin-stat} is as follows:
Find $u \in V^2$ such that
\begin{equation}\label{e:weak}
a(u, \phi) = \langle f, \phi\rangle \qquad\text{for all}\quad \phi \in V^2 ,
\end{equation}
where $f\in (V^2)'$ is given.

%The next result shows that weak solutions in the sense of \eqref{e:weak} are in fact weak solutions of the coupled system for $(u,p)$ and, under additional regularity, satisfy the strong boundary conditions.

\begin{theorem}\label{t:weak-strong}
(a) Suppose that $u\in V^2$ satisfies \eqref{e:weak}, and let $f\in H^{-2}(\Omega)^3$. Then there exists $p\in H^{-1}(\Omega)$ such that
\begin{equation}
\Delta^2u+\nabla p = f .
\end{equation}
(b) In addition, assume that $u\in V^4$ and $f\in L^{2}(\Omega)^3$.
Then $p\in H^1(\Omega)$ and 
\begin{equation}
u=0 \quad \textrm{and} \quad (1-n\otimes n)Gn=k\omega \quad \textrm{on}\;\partial\Omega.
\end{equation}
\end{theorem}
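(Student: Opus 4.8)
My plan for part~(a) is a de~Rham–type argument. For $\phi\in\mathcal{V}$ the boundary integral in $a(u,\phi)$ drops out, and performing the two integrations by parts from the proof of the Lemma — with $\nabla\cdot u=0$, since $u\in V^2\subset V$ — gives $a(u,\phi)=\langle\Delta^2 u,\phi\rangle$ in the $H^{-2}(\Omega)^3$–$H^2_0(\Omega)^3$ pairing. Then \eqref{e:weak} yields $\langle\Delta^2 u-f,\phi\rangle=0$ for every $\phi\in\mathcal{V}$, and since $\Delta^2 u-f\in H^{-2}(\Omega)^3$ annihilates all divergence-free test fields, the shifted de~Rham lemma (a distribution in $H^{-2}$ orthogonal to $\mathcal{V}$ equals $\nabla p$ for some $p\in H^{-1}$) produces $p\in H^{-1}(\Omega)$ with $f-\Delta^2 u=\nabla p$.

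For part~(b) I would first record that $u\in V^4\subset V\subset H^1_0(\Omega)^3$ already gives $u=0$ on $\partial\Omega$, and that the identity $\nabla p=f-\Delta^2 u$ from~(a), now with $\Delta^2 u\in L^2$ and $f\in L^2$, shows $\nabla p\in L^2(\Omega)^3$; since $p\in H^{-1}(\Omega)$, the Ne\v{c}as inequality then forces $p\in L^2(\Omega)$ up to a constant, hence $p\in H^1(\Omega)$. To obtain the remaining boundary condition I would test \eqref{e:a-strong} — legitimate for $u\in H^4$ and $\phi\in H^2$ with $\phi|_{\partial\Omega}=0$ by density of smooth fields and continuity of both sides — against an arbitrary $\phi\in V^2$: using $\nabla\cdot u=0$, $\Delta^2 u=f-\nabla p$, $\nabla\cdot\phi=0$ and $\phi|_{\partial\Omega}=0$, an integration by parts collapses $\int_\Omega\Delta^2 u\cdot\phi$ to $\int_\Omega f\cdot\phi=\langle f,\phi\rangle$, so that
\begin{equation*}
a(u,\phi)=\langle f,\phi\rangle+\int_{\partial\Omega}\big(kn\times\omega-n\times Gn\big)\cdot\partial_n\phi ,
\end{equation*}
and comparison with \eqref{e:weak} yields $\int_{\partial\Omega}(kn\times\omega-n\times Gn)\cdot\partial_n\phi=0$ for all $\phi\in V^2$.

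Two elementary observations then finish. Since $\phi|_{\partial\Omega}=0$ makes $\nabla\phi$ normal along $\partial\Omega$, we have $\partial_n\phi\cdot n=(\nabla\cdot\phi)|_{\partial\Omega}=0$, so $\partial_n\phi$ is tangential; conversely, every smooth tangential field $\tau$ on $\partial\Omega$ equals $\partial_n\phi|_{\partial\Omega}$ for some $\phi\in V^2$ — choose $\psi\in H^2(\Omega)^3$ with $\psi|_{\partial\Omega}=0$ and $\partial_n\psi|_{\partial\Omega}=\tau$, observe that $\nabla\cdot\psi\in H^1_0(\Omega)$ has zero mean, subtract the $H^2_0(\Omega)^3$-valued Bogovskii correction $w$ with $\nabla\cdot w=\nabla\cdot\psi$, and set $\phi=\psi-w\in V^2$, which has $\partial_n\phi|_{\partial\Omega}=\tau$. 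Moreover the vector $T:=kn\times\omega-n\times Gn$ is itself tangential, because on $\partial\Omega$ one has $\omega=n\times\partial_n u\perp n$, hence $n\times\omega\perp n$, while $n\times Gn\perp n$ trivially. Therefore $\int_{\partial\Omega}T\cdot\tau=0$ for every smooth tangential $\tau$, so $T=0$ on $\partial\Omega$; and since $n\times Gn=n\times\big((1-n\otimes n)Gn\big)$, this says $n\times\big(k\omega-(1-n\otimes n)Gn\big)=0$ with tangential argument, equivalently $(1-n\otimes n)Gn=k\omega$ on $\partial\Omega$.

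I expect the one genuinely delicate step to be the surjectivity of $\phi\mapsto\partial_n\phi|_{\partial\Omega}$ from $V^2$ onto tangential fields: the lift $\psi$ carrying the prescribed normal derivative is not divergence-free, and one must cancel its divergence without disturbing either boundary trace, which is exactly what a bounded right inverse of $\nabla\cdot$ mapping $H^1_0(\Omega)\cap L^2_0(\Omega)$ into $H^2_0(\Omega)^3$ — furnished by Bogovskii's construction — accomplishes. The other ingredients — the de~Rham step in~(a), the bootstrap of $p$ to $H^1$, and the density arguments legitimising \eqref{e:a-strong} outside the smooth class — are routine.
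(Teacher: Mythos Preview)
Your proof is correct and follows essentially the same route as the paper's: a de~Rham argument for~(a), and for~(b) the use of \eqref{e:a-strong} by density to isolate the boundary integral, followed by the surjectivity of $\phi\mapsto\partial_n\phi|_{\partial\Omega}$ from $V^2$ onto tangential fields. Your explicit Bogovskii construction for this surjectivity is a nice concrete alternative to the paper's citation of a density statement from Temam, but the overall architecture is the same.
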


\begin{proof}
(a)
We can define a linear functional $L$ on test fields, as follows:
$$
\langle L, \psi \rangle = \langle f, \psi\rangle - a(u, \psi), \qquad \psi \in \mathcal{D}(\Omega)^3.
$$
From the definition of our weak solution, $L$ vanishes on $\mathcal{V}$.
Since $L$ vanishes on all divergence-free test fields, by the classical
Helmholtz decomposition
there exists a scalar distribution $p\in H^{-1}(\Omega)$ such that $L=\nabla p$,  cf. \cite[\S1.4]{Temam}.
Now if $u\in C^\infty(\bar{\Omega})^3$, then \eqref{e:a-bilap} gives
\begin{equation}
a(u,\phi) = \int_\Omega \big(\Delta u\cdot\Delta\phi-\nabla(\nabla\cdot u)\cdot\nabla(\nabla\cdot\phi)\big) ,
\qquad \phi\in\mathcal{D}(\Omega)^3,
\end{equation}
and as $u\mapsto a(u,\phi)$ is continuous as a map $H^2(\Omega)^3\to\R$ for any fixed test field $\phi\in\mathcal{D}(\Omega)^3$, 
this formula is valid for $u\in H^2(\Omega)^3$ by density.
In particular, for $u\in V^2$, we have
\begin{equation}
a(u,\phi) = \int_\Omega \Delta u\cdot\Delta\phi = \langle\Delta^2u,\phi\rangle .
\end{equation}
Thus we find that
$$
\langle f - \Delta^2 u, \psi \rangle = \langle \nabla p, \psi \rangle , \qquad\text{for any}\ \psi \in \mathcal{D}(\Omega)^3 ,
$$
meaning that the equation $\Delta^2 u + \nabla p = f$ holds in the sense of distributions.

(b)
If $u\in V^4$ and $f\in L^{2}(\Omega)^3$, then $\nabla p=f-\Delta^2 u\in L^2(\Omega)^3$ and hence $p\in H^1(\Omega)^3$.
Moreover, $\Delta^2 u + \nabla p = f$ holds now almost everywhere pointwise in $\Omega$.
The space $V^4$ already incorporates the homogeneous Dirichlet boundary condition, meaning that $u=0$ on $\partial\Omega$ almost everywhere.
Arguing by density as above, we see that \eqref{e:a-strong} is valid for $u\in H^4(\Omega)^3$.
Then taking into account that $\nabla\cdot u=0$, we infer from the weak formulation \eqref{e:weak} that
\begin{equation}
\int_\Omega (\Delta^2u)\cdot\phi
   + \int_{\partial\Omega} (kn\times\omega-n\times Gn) \cdot \partial_n\phi = \int_\Omega f\cdot\phi ,
\end{equation}
for all $\phi\in C^\infty(\bar\Omega)^3$ with $\nabla\cdot\phi=0$ and $\phi|_{\partial\Omega}=0$.
Since 
\begin{equation}
\int_\Omega (f-\Delta^2u)\cdot\phi = \int_\Omega (\nabla p)\cdot\phi = \int_{\partial\Omega}pn\cdot\phi - \int_\Omega p\nabla\cdot\phi = 0 ,
\end{equation}
for such test fields, we have
\begin{equation}\label{e:weak-strong-pf-1}
\int_{\partial\Omega} (kn\times\omega-n\times Gn) \cdot \partial_n\phi = 0 ,
\end{equation}
for all $\phi\in C^\infty(\bar\Omega)^3$ with $\nabla\cdot\phi=0$ and $\phi|_{\partial\Omega}=0$.
Note that the boundary vector field $B = kn\times\omega-n\times Gn$ is tangential on $\partial\Omega$.  
It is a standard fact (see, e.g., the construction of
solenoidal extensions in \cite[\S1.4]{Temam}) that the set
\begin{equation}
\bigl\{\partial_n\phi|_{\partial\Omega} :
    \phi\in C^\infty(\bar\Omega)^3,\ \phi|_{\partial\Omega}=0,\ 
    \nabla\cdot\phi=0\bigr\}
\end{equation}
is dense in the space of smooth tangential vector fields on $\partial\Omega$.
Hence \eqref{e:weak-strong-pf-1} implies
\begin{equation}
\int_{\partial\Omega} B\cdot\tau = 0
\end{equation}
for all smooth tangential $\tau$, hence $B=0$ on $\partial\Omega$, i.e.
\begin{equation}
n\times(k\omega - Gn) = 0 \quad\text{on }\partial\Omega.
\end{equation}
This says that the tangential component of $Gn-k\omega$ vanishes
\begin{equation}
(1-n\otimes n)(k\omega - Gn) = 0 \quad\text{on }\partial\Omega.
\end{equation}
However, local computations show that $\omega$ is tangential because $u=0$ on $\partial\Omega$,
and hence $(1-n\otimes n)\omega=\omega$,
completing the proof.
\end{proof}

\begin{remark}
In part~\textup{(a)} of Theorem~\ref{t:weak-strong} we work entirely at
the distributional level; the argument only uses integration by parts in the
volume and basic trace properties of $H^2(\Omega)^3$, so a bounded Lipschitz
domain would already be sufficient.  Part~\textup{(b)} uses the fact that
$u\in V^4$ has well-defined traces of $\nabla\times u$ and $G(u)n$ on
$\partial\Omega$ and that these traces are compatible with the strong wall--eddy
boundary condition.  Again, $\partial\Omega\in C^2$ is ample for these trace
statements and for the elliptic regularity invoked here.  In later sections,
when we appeal to Agmon--Douglis--Nirenberg regularity for the fourth-order
system, higher boundary regularity (e.g.\ $\partial\Omega\in C^{4}$) will be
assumed explicitly in the corresponding theorems.
\end{remark}

\subsection{Symmetry and continuity}

Having identified the appropriate weak formulation, we now collect some basic
functional analytic properties of the bilinear form $a(\cdot,\cdot)$.  In
particular, we establish its continuity on $H^2(\Omega)^3$ and its symmetry on
vector fields with homogeneous Dirichlet data.

\begin{theorem}
\label{thm:sym}
(a)
There exists $C>0$ such that
\begin{equation}
      |a(u,\phi)| \le C\,\|u\|_{H^2(\Omega)}\,\|\phi\|_{H^2(\Omega)}
      \qquad\forall\,u,\phi\in H^2(\Omega)^3.
\end{equation}
(b)
For all $u,\phi\in H^2(\Omega)^3\cap H^1_0(\Omega)^3$ one has
\begin{equation}
      a(u,\phi) = a(\phi,u).
\end{equation}
\end{theorem}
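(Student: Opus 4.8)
The plan is to establish (a) first, since continuity on $H^2(\Omega)^3$ is exactly what licenses the density reduction used in (b). For (a) I would bound the two constituents of $a(u,\phi)$ separately. In the bulk term, $G=\nabla\omega+\gamma(\nabla\omega)^\top$ with $\omega=\nabla\times u$, so $|G|\le(1+|\gamma|)\,|\nabla\omega|\le2\,|\nabla(\nabla\times u)|$ pointwise, hence $\|G\|_{L^2(\Omega)}\le C\|u\|_{H^2(\Omega)}$; likewise $\|\nabla(\nabla\times\phi)\|_{L^2(\Omega)}\le C\|\phi\|_{H^2(\Omega)}$, and Cauchy--Schwarz gives $\big|\int_\Omega G:\nabla(\nabla\times\phi)\big|\le C\|u\|_{H^2(\Omega)}\|\phi\|_{H^2(\Omega)}$. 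For the boundary term, $\omega=\nabla\times u\in H^1(\Omega)^3$ and the first-order derivatives of $\phi$ lie in $H^1(\Omega)$, so the trace inequality $H^1(\Omega)\hookrightarrow L^2(\partial\Omega)$ yields $\|n\times\omega\|_{L^2(\partial\Omega)}\le C\|u\|_{H^2(\Omega)}$ and $\|\partial_n\phi\|_{L^2(\partial\Omega)}\le C\|\phi\|_{H^2(\Omega)}$, using that $n$ is bounded on the $C^2$ boundary; Cauchy--Schwarz on $\partial\Omega$ then bounds $\big|k\int_{\partial\Omega}(n\times\omega)\cdot\partial_n\phi\big|$ by $C\|u\|_{H^2(\Omega)}\|\phi\|_{H^2(\Omega)}$. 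Summing the two estimates proves (a).

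For (b) the idea is that \emph{each} of the two terms defining $a$ is separately symmetric under $u\leftrightarrow\phi$ on $H^2(\Omega)^3\cap H^1_0(\Omega)^3$. By (a) together with the fact that $H^2(\Omega)^3\cap H^1_0(\Omega)^3$ is the $H^2$-closure of $\{w\in C^\infty(\bar\Omega)^3:w|_{\partial\Omega}=0\}$, it suffices to verify $a(u,\phi)=a(\phi,u)$ for such smooth fields; note that swapping $u$ and $\phi$ interchanges $\omega=\nabla\times u$ and $\eta=\nabla\times\phi$. For the bulk term, $\int_\Omega G:\nabla\eta=\int_\Omega\nabla\omega:\nabla\eta+\gamma\int_\Omega(\nabla\omega)^\top:\nabla\eta$, and both integrals are symmetric in $(\omega,\eta)$ by a direct index computation with no integration by parts: $\int_\Omega\nabla\omega:\nabla\eta=\int_\Omega\partial_j\omega_i\,\partial_j\eta_i$ (sum over repeated indices) is manifestly unchanged under $\omega\leftrightarrow\eta$, while $\int_\Omega(\nabla\omega)^\top:\nabla\eta=\int_\Omega\partial_i\omega_j\,\partial_j\eta_i$ becomes $\int_\Omega(\nabla\eta)^\top:\nabla\omega$ after relabelling the dummy indices. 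Hence the bulk part of $a(u,\phi)$ equals that of $a(\phi,u)$, and, importantly, this step uses no boundary condition at all.

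It remains to treat the boundary term, and this is where the hypothesis $u,\phi\in H^1_0$ enters. Since $u|_{\partial\Omega}=0$, all tangential derivatives of $u$ vanish on $\partial\Omega$, so $\omega=\nabla\times u=n\times\partial_n u$ there — the same identity established in the proof of the lemma above — and likewise $\eta=n\times\partial_n\phi$. Therefore, using $n\times(n\times a)=(n\cdot a)n-a$ for the unit normal $n$,
\begin{equation}
(n\times\omega)\cdot\partial_n\phi=\big(n\times(n\times\partial_n u)\big)\cdot\partial_n\phi=(n\cdot\partial_n u)(n\cdot\partial_n\phi)-\partial_n u\cdot\partial_n\phi\quad\text{on }\partial\Omega,
\end{equation}
which is visibly symmetric in $u$ and $\phi$; integrating over $\partial\Omega$ gives $\int_{\partial\Omega}(n\times\omega)\cdot\partial_n\phi=\int_{\partial\Omega}(n\times\eta)\cdot\partial_n u$. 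Combined with the symmetry of the bulk term, this yields $a(u,\phi)=a(\phi,u)$ for the smooth fields, and the density reduction extends it to all of $H^2(\Omega)^3\cap H^1_0(\Omega)^3$. I do not expect a genuine obstacle here: the only points requiring care are recording the correct density statement for $H^2\cap H^1_0$ and observing that the identity $\omega=n\times\partial_n u$ on $\partial\Omega$ — trivial for smooth fields — is precisely the reason for passing to the smooth case first; overall the argument should be short.
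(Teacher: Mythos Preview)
Your proof is correct and follows essentially the same route as the paper: Cauchy--Schwarz plus trace estimates for part~(a), and for part~(b) the observation that the bulk integrand is algebraically symmetric in $\nabla\omega$ and $\nabla\eta$ together with the boundary identity $\omega=n\times\partial_n u$ to rewrite the surface term in a manifestly symmetric form. The only cosmetic differences are that the paper uses $H^{1/2}(\partial\Omega)$ rather than $L^2(\partial\Omega)$ for the trace bound and derives the symmetric boundary expression via the scalar triple product identity $(n\times\omega)\cdot\partial_n\phi=-(n\times\partial_n u)\cdot(n\times\partial_n\phi)$ instead of your BAC--CAB expansion, and it does not spell out the density reduction explicitly.
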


\begin{proof}
(a)
For $u\in H^2(\Omega)^3$ we have $\omega=\nabla\times u\in H^1(\Omega)^3$ and $\nabla\omega\in L^2(\Omega)^{3\times 3}$. 
Thus $G\in L^2(\Omega)^{3\times 3}$ with
\begin{equation}
\|G\|_{L^2(\Omega)}
 \lesssim \|\nabla\omega(u)\|_{L^2(\Omega)}
 \lesssim \|u\|_{H^2(\Omega)}.
\end{equation}
Here and in what follows, we write $A\lesssim B$ for $A\le C\,B$ with $C>0$ that may have different values at its different occurrences.
For $\phi\in H^2(\Omega)^3$ one has
$\nabla(\nabla\times\phi)\in L^2(\Omega)^{3\times 3}$ and
\begin{equation}
\|\nabla(\nabla\times\phi)\|_{L^2(\Omega)}
 \lesssim \|\phi\|_{H^2(\Omega)}.
\end{equation}
Hence
\begin{equation}
\biggl|\int_\Omega G:\nabla(\nabla\times\phi)\,dx\biggr|
 \le \|G\|_{L^2}\,\|\nabla(\nabla\times\phi)\|_{L^2}
 \lesssim \|u\|_{H^2}\,\|\phi\|_{H^2}.
\end{equation}

Now we turn to the boundary term. 
Since
$\omega\in H^1(\Omega)^3$, we have $\omega|_{\partial\Omega}\in H^{1/2}(\partial\Omega)^3$, and similarly
$\partial_n\phi\in H^{1/2}(\partial\Omega)^3$. Thus
\begin{equation}
\biggl|\int_{\partial\Omega} (n\times\omega)\cdot\partial_n\phi\,dS\biggr|
 \lesssim \|\omega\|_{H^1(\Omega)}\,\|\phi\|_{H^2(\Omega)}
 \lesssim \|u\|_{H^2(\Omega)}\,\|\phi\|_{H^2(\Omega)}.
\end{equation}
Combining the two estimates yields (a).

(b)
The symmetry of the volume integral follows from
\begin{equation}
(\nabla\omega+\gamma(\nabla\omega)^\top):\nabla\psi 
=
\nabla\omega:\nabla\psi +\gamma(\nabla\omega)^\top:\nabla\psi 
=
\nabla\omega:(\nabla\psi +\gamma(\nabla\psi)^\top) ,
\end{equation}
where $\psi=\nabla\times\phi$, and where have used the fact that $A^\top : B = A : B^\top$ for any pair of square matrices $A,B\in\mathbb{R}^{3\times 3}$.
The boundary term is also symmetric, since $u=0$ on $\partial\Omega$ implies $\omega=n\times\partial_nu$, and so
\begin{equation}
(n\times\omega)\cdot\partial_n\phi
= (n\times n\times\partial_nu)\cdot\partial_n\phi
= -(n\times\partial_nu)\cdot(n\times\partial_n\phi) ,
\end{equation}
where we have used the triple product identity 
$(a\times b)\cdot c = b\cdot(c\times a)$ with $a=n$, $b=n\times\partial_n u$ and $c=\partial_n\phi$.
Thus $a(u,\phi)=a(\phi,u)$ for all $u,\phi\in V^2$.
\end{proof}

\begin{remark}
In the preceding theorem we use only:
\begin{itemize}
  \item standard trace theorems for $H^2(\Omega)$ and $H^1(\Omega)$, and
  \item elementary integration by parts identities in the volume and on
        $\partial\Omega$.
\end{itemize}
In particular, the proof does not rely on any delicate elliptic regularity,
and the assumption $\partial\Omega\in C^2$ can be relaxed here: a bounded
Lipschitz domain already suffices to justify the traces and Green formulas used in the argument.
We keep the global standing assumption $\partial\Omega\in C^2$ for notational
uniformity throughout the section.
\end{remark}

\subsection{Coercivity}
\label{ss:garding}

To use the form $a(\cdot,\cdot)$ as the starting point for the operator
theory in \S\ref{ss:operator} and for the energy estimates in \S\ref{s:evol}, we require a
Gårding type lower bound.  The following theorem provides such an estimate
and shows, in particular, that $a(\cdot,\cdot)$ controls the full $H^2$–norm
of divergence-free fields.

\begin{theorem}\label{t:garding}
      There exist constants $c_0>0$ and $\gamma_0\ge 0$, depending only
      on $\Omega$, $\gamma$ and $k$, such that
      \begin{equation}
      \label{eq:garding}
      a(u,u) + \gamma_0 \|u\|_{L^2(\Omega)}^2 \ge c_0 \|u\|_{H^2(\Omega)}^2
      \qquad\text{for all}\ u\in V^2 .
      \end{equation}
\end{theorem}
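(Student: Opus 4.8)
The plan is to show that the interior term $\int_\Omega G:\nabla(\nabla\times u)$ of $a(u,u)$ already dominates $\|u\|_{H^2(\Omega)}^2$ up to a multiple of $\|u\|_{L^2(\Omega)}^2$, and that the boundary term together with the remaining lower-order contributions is absorbable. The mechanism is the interplay between the symmetric/antisymmetric decomposition of $\nabla\omega$, the constraint $\nabla\cdot u=0$, Korn's inequality, and elliptic regularity for the Dirichlet Laplacian.

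First I would rewrite the bulk integrand. Writing $\omega=\nabla\times u$, $M=\nabla\omega$, $S=\tfrac12(M+M^\top)$, $A=\tfrac12(M-M^\top)$, and using $S:A=0$, one has the pointwise identity $G:\nabla\omega=(M+\gamma M^\top):M=(1+\gamma)|S|^2+(1-\gamma)|A|^2$, hence
\[
\int_\Omega G:\nabla(\nabla\times u)=(1+\gamma)\,\|S\|_{L^2}^2+(1-\gamma)\,\|A\|_{L^2}^2 ,
\]
with both weights $\ge0$ since $|\gamma|\le1$. Next I would record the elementary identities $|A|^2=\tfrac12|\nabla\times\omega|^2$ (pointwise), $\nabla\times\omega=\nabla\times\nabla\times u=-\Delta u$ (using $\nabla\cdot u=0$), and $|\nabla\times\omega|\le\sqrt2\,|\nabla\omega|$; together these give $\|A\|_{L^2}^2=\tfrac12\|\Delta u\|_{L^2}^2$ and $\|\nabla\omega\|_{L^2}^2\ge\tfrac12\|\Delta u\|_{L^2}^2$.

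Then I would invoke the two workhorse inequalities. Since $u\in V^2\subset H^2(\Omega)^3\cap H^1_0(\Omega)^3$ and $\partial\Omega\in C^2$, elliptic regularity for the componentwise Dirichlet Laplacian gives $\|u\|_{H^2}\le C_E\|\Delta u\|_{L^2}$, so $\|A\|_{L^2}^2\ge(2C_E^2)^{-1}\|u\|_{H^2}^2$ and also $\|\nabla\omega\|_{L^2}^2\ge(2C_E^2)^{-1}\|u\|_{H^2}^2$. Korn's second inequality applied to $\omega\in H^1(\Omega)^3$, $\|\nabla w\|_{L^2}^2\le C_K(\|S(w)\|_{L^2}^2+\|w\|_{L^2}^2)$, gives $\|S\|_{L^2}^2\ge C_K^{-1}\|\nabla\omega\|_{L^2}^2-\|\omega\|_{L^2}^2$. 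Substituting into the displayed identity, and using that the weights $(1+\gamma)$ and $(1-\gamma)$ are nonnegative, affine in $\gamma$, and sum to $2$, I obtain a uniform lower bound
\[
\int_\Omega G:\nabla(\nabla\times u)\ \ge\ c_*\|u\|_{H^2(\Omega)}^2-2\|\omega\|_{L^2(\Omega)}^2 ,\qquad c_*>0\ \text{depending only on}\ \Omega .
\]

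Finally I would absorb the lower-order terms: $\|\omega\|_{L^2(\Omega)}^2\le2\|\nabla u\|_{L^2(\Omega)}^2\le\eps\|u\|_{H^2}^2+C_\eps\|u\|_{L^2}^2$ by interpolation, and, for the boundary integral, Cauchy--Schwarz together with the multiplicative trace inequality $\|v\|_{L^2(\partial\Omega)}^2\lesssim\|v\|_{L^2(\Omega)}\|v\|_{H^1(\Omega)}$ (applied with $v=\omega$ and $v=\nabla u$) and a further interpolation give $\bigl|k\int_{\partial\Omega}(n\times\omega)\cdot\partial_n u\bigr|\le\eps\|u\|_{H^2}^2+C_\eps\|u\|_{L^2}^2$. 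Taking $\eps$ small relative to $c_*$ then yields \eqref{eq:garding}. The one step I expect to require real care is the uniformity over the full admissible range $|\gamma|\le1$: at $\gamma=1$ the antisymmetric part carries no weight and coercivity rests entirely on Korn's inequality, while at $\gamma=-1$ the symmetric part drops out and one relies wholly on $\nabla\times\nabla\times u=-\Delta u$ (hence on $\nabla\cdot u=0$) together with elliptic regularity; it is the affine dependence of the weights on $\gamma$ that knits the two extremes into a single estimate.
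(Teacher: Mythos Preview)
Your argument is correct and uses the same ingredients as the paper---Korn's second inequality for the symmetric part of $\nabla\omega$, the identity $\nabla\times\omega=-\Delta u$ together with $H^2$-regularity for the Dirichlet Laplacian for the antisymmetric part, and trace/interpolation for the boundary term---but you organize them differently. The paper splits into three cases ($|\gamma|<1$, $\gamma=-1$, $\gamma=1$) and in each case invokes only the tool needed there: a crude pointwise bound $G:\nabla\omega\ge(1-|\gamma|)|\nabla\omega|^2$ in the interior of the range, the curl identity at $\gamma=-1$, and Korn at $\gamma=1$. You instead write the single identity $G:\nabla\omega=(1+\gamma)|S|^2+(1-\gamma)|A|^2$ and bound both pieces simultaneously, exploiting the affine dependence on $\gamma$ to get a coercivity constant $c_*$ that is uniform over the whole closed interval $|\gamma|\le1$ and depends only on $\Omega$. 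That is a tidier packaging: it avoids the case split, makes the endpoint mechanisms visible in one line, and shows that the G\aa rding constants can in fact be chosen independent of $\gamma$ (the paper's statement allows dependence on $\gamma$). The boundary treatment is equivalent to the paper's; your multiplicative trace inequality is just a sharper form of the $H^{3/2}$-trace plus interpolation that the paper uses.
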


\begin{proof}
Let us treat the volume term first. We consider 3 cases, depending on the value of $\gamma$.

{\em Case $|\gamma|<1$}.
Using the Cauchy--Schwarz inequality
\begin{equation}
\bigl|(\nabla\omega)^\top:\nabla\omega\bigr|  \le |\nabla\omega|^2,
\end{equation}
we obtain pointwise
\begin{equation}
G:\nabla\omega
 \ge (1-|\gamma|)\,|\nabla\omega|^2.
\end{equation}
Integrating over $\Omega$ gives
\begin{equation}
\label{eq:garding-omega}
\int_\Omega G:\nabla\omega
 \ge (1-|\gamma|)\|\nabla\omega\|_{L^2(\Omega)}^2 \gtrsim \|\nabla\times\omega\|_{L^2(\Omega)}^2 = \|\Delta u\|_{L^2(\Omega)}^2 \gtrsim \|u\|_{H^2(\Omega)}^2 ,
\end{equation}
since $u$ satisfies the homogeneous Dirichlet boundary condition.

{\em Case $\gamma=-1$}.
For any square matrix $A$, we have
\begin{equation}
(A-A^\top):A 
= (A-A^\top):\big(\frac{A+A^\top}2 + \frac{A-A^\top}2\big)
= \frac12(A-A^\top):(A-A^\top) ,
\end{equation}
because $(A-A^\top):(A+A^\top)=0$.
If $A=\nabla\omega$ then the elements of $A-A^\top$ are exactly the elements of $\nabla\times\omega$ counted twice,
and hence
\begin{equation}
G:\nabla\omega = 
\frac12(\nabla\omega-\nabla\omega^\top):(\nabla\omega-\nabla\omega^\top)
= |\nabla\times\omega|^2 .
\end{equation}
Then taking into account
\begin{equation}
\nabla\times\omega = \nabla\times\nabla\times u = \nabla(\nabla\cdot u) - \Delta u = - \Delta u ,
\end{equation}
we have
\begin{equation}
\int_\Omega G:\nabla\omega = \|\nabla\times\omega\|_{L^2(\Omega)}^2 = \|\Delta u\|_{L^2(\Omega)}^2 \gtrsim \|u\|_{H^2(\Omega)}^2 ,
\end{equation}
since $u$ satisfies the homogeneous Dirichlet boundary condition.

{\em Case $\gamma=1$}.
Similarly to the preceding case, we compute
\begin{equation}
G:\nabla\omega = 
\frac12(\nabla\omega+\nabla\omega^\top):(\nabla\omega+\nabla\omega^\top)
= 2|\eps(\omega)|^2 ,
\end{equation}
where 
\begin{equation}
\eps(\omega) = \frac12(\nabla\omega+\nabla\omega^\top) ,
\end{equation}
is the symmetric gradient of $\omega$.
By the second Korn inequality (see, e.g., \cite[Thm.~2.5-2]{Ciarlet} or
\cite[Thm.~10.1.1]{BrennerScott}), we have
\begin{equation}
\|\omega\|_{H^1(\Omega)} \lesssim \|\eps(\omega)\|_{L^2(\Omega)} + \|\omega\|_{L^2(\Omega)} ,
\end{equation}
and so
\begin{equation}
\int_\Omega G:\nabla\omega = 2\|\eps(\omega)\|_{L^2(\Omega)}^2 \ge c \|\omega\|_{H^1(\Omega)}^2 - C \|\omega\|_{L^2(\Omega)}^2 ,
\end{equation}
with $c>0$.
The first term on the right hand side can be bounded as
\begin{equation}
\|\omega\|_{H^1(\Omega)}\geq\|\nabla\times\omega\|_{L^2(\Omega)} = \|\Delta u\|_{L^2(\Omega)} \gtrsim \|u\|_{H^2(\Omega)} ,
\end{equation}
and the second term as
\begin{equation}
\|\omega\|_{L^2(\Omega)} \leq \|u\|_{H^1(\Omega)} \leq \eps \|u\|_{H^2(\Omega)} + C_\eps \|u\|_{L^2(\Omega)} .
\end{equation}

Now we will deal with the boundary term. 
By the trace theorem and an interpolation inequality, for any $\varepsilon>0$ there exists $C_\varepsilon>0$ such that
\begin{equation}
\int_{\partial\Omega} |n\times\partial_nu|^2
 \le C \|u\|_{H^{3/2}(\Omega)}^2
 \le \varepsilon \|u\|_{H^2(\Omega)}^2
      + C_\varepsilon\,\|u\|_{L^2(\Omega)}^2.
\end{equation}
This completes the proof.
\end{proof}

The preceding estimate is sufficient for our purposes, but it is useful to
record two variants that clarify the role of the parameters and the boundary
regularity.

\begin{remark}
The bilinear form $a$ is strictly coercive (i.e., $C=0$) if the parameters satisfy a smallness condition, for instance, of the form
$$
\frac{\ell}{\beta}<\frac{\delta\beta}{\diam(\Omega)} \quad \text{for some } \delta > 0.
$$
\end{remark}

\begin{remark}
The G\r{a}rding inequality as stated, with control of the
full $H^2$-norm of $u$, relies on global elliptic regularity for the Poisson
problem with Dirichlet boundary conditions.  We do not attempt to optimize the
boundary regularity here; the standing assumption $\partial\Omega\in C^2$ is
more than sufficient for the estimates used in this section.
\end{remark}

\subsection{The elliptic operator}
\label{ss:operator}

The $L^2$–closure of divergence-free test fields is denoted by $H$.  The
$L^2$–inner product and norm on $H$ are written $(\cdot,\cdot)$ and
$\|\cdot\|$, and $P:L^2(\Omega)^3\to H$ is the Leray projector.  Recall
that $V$ is the closure of $\mathcal{V}$ in $H^1(\Omega)^3$ and that
$V^2=V\cap H^2(\Omega)^3$.

%The symmetry and Gårding inequality from Theorems~\ref{thm:symmetry-continuity} and~\ref{thm:garding} allow us to interpret $a(\cdot,\cdot)$ as a closed, lower bounded quadratic form on $H$.  We now introduce the associated self–adjoint operator and record its basic spectral properties.

The bilinear form $a:V^2\times V^2\to\mathbb{R}$ is continuous and symmetric
and satisfies the G\r{a}rding inequality
\begin{equation}\label{eq:garding-shifted}
  a(u,u) + \gamma_0\|u\|^2 \,\ge c_0\|u\|_{H^2(\Omega)}^2,
  \qquad u\in V^2,
\end{equation}
for some constants $c_0>0$ and $\gamma_0\ge0$, cf.\ Theorem~\ref{t:garding}.
In particular $a(\cdot,\cdot)$ is a densely defined, closed, lower bounded
symmetric form on $H$ in the sense of Kato
\cite{Kato} and Lions--Magenes \cite{LionsMagenes}.

\begin{proposition}\label{p:A-sa-gen}
There exists a unique self-adjoint operator $A:\dom(A)\subset H\to H$ such
that
\begin{equation}\label{eq:A-form}
  (Au,\phi) \,=\, a(u,\phi)
  \qquad\text{for all }u\in\dom(A),\ \phi\in V^2,
\end{equation}
where
\begin{equation}
  \dom(A)
  := \bigl\{u\in V^2 : \exists f\in H\text{ with }a(u,\phi)=(f,\phi)
       \ \forall\,\phi\in V^2\bigr\} .
\end{equation}
The spectrum of $A$ is bounded from below,
\begin{equation}
  (Au,u)\ge -\gamma_0\|u\|^2,\qquad u\in\dom(A),
\end{equation}
and the resolvent $(A+\eta I)^{-1}:H\to H$ is compact for every $\eta\ge\gamma_0$.
\end{proposition}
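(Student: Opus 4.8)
The plan is to obtain $A$ as a real shift of the self-adjoint operator attached, through the first representation theorem for closed lower-bounded symmetric forms, to the \emph{coercive} symmetric form $b(u,\phi):=a(u,\phi)+\gamma_0\,(u,\phi)$ on $V^2$. First I would verify that $b$ fits the abstract framework. The space $V^2=V\cap H^2(\Omega)^3$ is closed in $H^2(\Omega)^3$ --- if $u_n\in V^2$ with $u_n\to u$ in $H^2$, then $u_n\to u$ in $H^1$, so $u\in V$, and $u\in H^2$, hence $u\in V^2$ --- so $V^2$ with the $H^2$-norm is a Hilbert space; since $\mathcal V\subseteq V^2$ and $\mathcal V$ is dense in $H$ by the definition of $H$, the form $b$ is densely defined in $H$. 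Continuity and symmetry of $a$, hence of $b$, on $V^2$ are Theorem~\ref{thm:sym}, while the G\r{a}rding bound~\eqref{eq:garding-shifted} gives $b(u,u)\ge c_0\|u\|_{H^2(\Omega)}^2$ for all $u\in V^2$; in particular the form norm $u\mapsto b(u,u)^{1/2}$ is equivalent to the $H^2$-norm on $V^2$, so $b$ is closed, and $b(u,u)\ge c_0\|u\|^2\ge 0$.

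Next I would invoke the first representation theorem (\cite[Thm.~VI.2.1]{Kato}; see also \cite{LionsMagenes}) to obtain a unique self-adjoint operator $B$ on $H$ with $\dom(B)\subseteq V^2$, $(Bu,\phi)=b(u,\phi)$ for $u\in\dom(B)$ and $\phi\in V^2$, and $B\ge c_0 I$, where $\dom(B)=\{u\in V^2:\exists f\in H,\ b(u,\phi)=(f,\phi)\ \forall\,\phi\in V^2\}$. Setting $A:=B-\gamma_0 I$ produces a self-adjoint operator with $\dom(A)=\dom(B)$ and $(Au,\phi)=b(u,\phi)-\gamma_0(u,\phi)=a(u,\phi)$ for $\phi\in V^2$; writing $f=g+\gamma_0 u$ with $g:=f-\gamma_0 u\in H$ turns the description of $\dom(B)$ into exactly the set defining $\dom(A)$ in the statement, with $Au=g$. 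Uniqueness follows from the density of $V^2$ in $H$: on the prescribed domain the identity $(Au,\phi)=a(u,\phi)$ for all $\phi\in V^2$ determines $Au$; equivalently, any self-adjoint operator attached to $a$ in this way yields, after adding $\gamma_0 I$, a self-adjoint operator attached to the closed form $b$, hence equal to $B$.

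The two remaining assertions are then read off directly. For $u\in\dom(A)$ one has $(Au,u)=a(u,u)=b(u,u)-\gamma_0\|u\|^2\ge-\gamma_0\|u\|^2$. For $\eta\ge\gamma_0$, the operator $A+\eta I=B+(\eta-\gamma_0)I\ge c_0 I$ is self-adjoint and bounded below by a positive constant, hence boundedly invertible on $H$; testing $(A+\eta I)u=f$ against $u\in V^2$ gives $a(u,u)+\eta\|u\|^2=(f,u)$, so $c_0\|u\|_{H^2(\Omega)}^2\le a(u,u)+\gamma_0\|u\|^2\le a(u,u)+\eta\|u\|^2=(f,u)\le\|f\|\,\|u\|_{H^2(\Omega)}$, which shows $(A+\eta I)^{-1}$ is bounded as a map $H\to V^2$. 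Composing with the compact Rellich embedding $V^2\hookrightarrow H$ (available since $\Omega$ is bounded) shows that $(A+\eta I)^{-1}:H\to H$ is compact.

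I do not anticipate a genuine obstacle here: the content is the standard form--operator correspondence. The steps that require care are the verification that $b$ is precisely a closed, densely defined, lower-bounded symmetric form in the sense demanded by the representation theorem --- which reduces to closedness of $V^2$ in $H^2(\Omega)^3$, density of $\mathcal V$ in $H$, and the G\r{a}rding inequality --- and the bookkeeping that matches the operator-theoretic domain $\dom(B)$ with the set prescribed for $\dom(A)$; given these, compactness of the resolvent is immediate from the coercivity estimate and Rellich's theorem.
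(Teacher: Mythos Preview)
Your proposal is correct and follows essentially the same route as the paper: shift the form by $\gamma_0$ to obtain a coercive symmetric form on $V^2$, apply the Kato/Lions--Magenes representation theorem to get the associated self-adjoint operator, then undo the shift and read off the lower bound and compact resolvent from the G\r{a}rding inequality and the compact embedding $V^2\hookrightarrow H$. You simply spell out in more detail the verification that the shifted form is closed and densely defined, the domain bookkeeping, and the explicit $H\to V^2$ bound behind compactness; the paper compresses these into a few lines.
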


\begin{proof}
By \eqref{eq:garding-shifted} the shifted form
\begin{equation}
  a_{\gamma_0}(u,\phi):=a(u,\phi)+\gamma_0(u,\phi)
\end{equation}
is symmetric, continuous on $V^2\times V^2$, and strictly coercive.  The general form
theory (see \cite{Kato} or \cite{LionsMagenes}) yields a unique self-adjoint operator
$\widetilde A$ associated with $a_{\gamma_0}$ by
$(\widetilde Au,\phi)=a_{\gamma_0}(u,\phi)$ for
$u\in\dom(\widetilde A)$, $\phi\in V^2$.  Setting $A:=\widetilde A-\gamma_0I$
gives \eqref{eq:A-form}, and $(Au,u)\ge-\gamma_0\|u\|_H^2$ follows from the
coercivity of $a_{\gamma_0}$.  The compactness of the embedding
$V^2\hookrightarrow H$ implies that $(\widetilde A+\eta I)^{-1}$ is compact
for every $\eta\ge0$, hence $(A+\eta I)^{-1}$ is compact for $\eta\ge\gamma_0$.
\end{proof}

By the spectral theory for compact self-adjoint operators
(e.g., \cite{ReedSimon1}), we obtain:

\begin{corollary}\label{c:spectrum-A}
The spectrum of $A$ consists of a nondecreasing sequence of real
eigenvalues
\begin{equation}
  \lambda_1\le\lambda_2\le\cdots\le\lambda_n\le\cdots,\qquad
  \lambda_n\to+\infty\ \text{as }n\to\infty,
\end{equation}
each of finite multiplicity, and $\lambda_1>-\gamma_0$.  The corresponding
eigenfunctions $\{v_n\}_{n\ge1}\subset\dom(A)$ form an orthonormal basis of
$H$ and an orthogonal basis of $V^2$ with respect to $a(\cdot,\cdot)$:
\begin{equation}
  (v_n,v_m)=\delta_{nm}, \qquad a(v_n,v_m)=\lambda_n\delta_{nm}.
\end{equation}
In particular, every $u\in H$ admits the expansion
\begin{equation}
  u = \sum_{n=1}^\infty (u,v_n)v_n
\end{equation}
with convergence in $H$.
\end{corollary}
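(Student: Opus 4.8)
The plan is to read the corollary off from Proposition~\ref{p:A-sa-gen} by applying the spectral theorem for compact self-adjoint operators to a resolvent of $A$. Fix any $\eta>\gamma_0$ and put $T:=(A+\eta I)^{-1}$, which Proposition~\ref{p:A-sa-gen} provides as a compact operator on $H$. Since $A$ is self-adjoint, so is $A+\eta I$, and the lower bound $(Au,u)\ge-\gamma_0\|u\|^2$ gives $((A+\eta I)u,u)\ge(\eta-\gamma_0)\|u\|^2$; hence $A+\eta I$ is injective and positive, and therefore $T$ is a self-adjoint, positive, injective compact operator on $H$, with $0\notin\sigma_p(T)$.

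Next I would invoke the spectral theorem for compact self-adjoint operators (as cited, \cite{ReedSimon1}): $T$ admits an orthonormal basis $\{v_n\}_{n\ge1}$ of $H$ consisting of eigenfunctions, $Tv_n=\mu_n v_n$ with $\mu_n>0$, each $\mu_n$ of finite multiplicity and $\mu_n\to0$. Injectivity of $T$ is exactly what guarantees that these eigenfunctions already span $H$, with no kernel to adjoin. Arranging the $\mu_n$ in nonincreasing order and setting $\lambda_n:=\mu_n^{-1}-\eta$, one checks that $v_n\in\dom(A)$ with $Av_n=\lambda_nv_n$, that $(\lambda_n)$ is nondecreasing with $\lambda_n\to+\infty$, and that each $\lambda_n$ inherits finite multiplicity from $\mu_n$; that $\sigma(A)$ consists of precisely these points follows from the reciprocal relation between the spectra of $A+\eta I$ and $T$ away from $0$. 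For the lower bound, \eqref{eq:garding-shifted} together with $\|u\|_{H^2(\Omega)}\ge\|u\|$ yields $a(u,u)\ge(c_0-\gamma_0)\|u\|^2$ on $V^2$, so the variational characterization $\lambda_1=\inf_{0\ne u\in V^2}a(u,u)/\|u\|^2$ gives $\lambda_1\ge c_0-\gamma_0>-\gamma_0$.

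It remains to address the $V^2$ statements. Since $v_n\in\dom(A)\subset V^2$, \eqref{eq:A-form} and the $H$-orthonormality give $a(v_n,v_m)=(Av_n,v_m)=\lambda_n\delta_{nm}$. To see that $\{v_n\}$ is complete in $V^2$ — equivalently, orthonormal after scaling for the equivalent inner product $a_\eta(\cdot,\cdot):=a(\cdot,\cdot)+\eta(\cdot,\cdot)$ on $V^2$ — suppose $w\in V^2$ satisfies $a_\eta(w,v_n)=0$ for all $n$; using the symmetry of $a$ from Theorem~\ref{thm:sym}(b) and \eqref{eq:A-form}, $0=a_\eta(v_n,w)=((A+\eta I)v_n,w)=(\lambda_n+\eta)(v_n,w)$, and since $\lambda_n+\eta>0$ this forces $(v_n,w)=0$ for all $n$, hence $w=0$ by completeness in $H$. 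The claimed expansion with convergence in $H$ is then just the Parseval expansion in the orthonormal basis $\{v_n\}$. The argument is entirely standard and I do not expect a genuinely hard step; the only points calling for a little care are the bookkeeping between the spectra of $T$ and of $A$ (in particular ensuring $0\notin\sigma_p(T)$, which is precisely where injectivity of $A+\eta I$ enters) and the completeness of $\{v_n\}$ in $V^2$, which relies on the symmetry of $a$ and on the strict positivity $\lambda_n+\eta>0$.
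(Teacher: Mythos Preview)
Your proof is correct and follows precisely the approach the paper indicates: it cites the spectral theory for compact self-adjoint operators applied to the resolvent $(A+\eta I)^{-1}$ furnished by Proposition~\ref{p:A-sa-gen}. You have simply supplied the standard details the paper omits, including the strict lower bound $\lambda_1>-\gamma_0$ via the G\r{a}rding inequality and the $V^2$-completeness argument using the shifted form $a_\eta$.
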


\begin{corollary}\label{c:ell-ex}
With the right hand side $f\in H$, the weak formulation \eqref{e:weak} of the linear stationary problem \eqref{e:lin-stat} can be stated as
\begin{equation}\label{e:op}
Au = f .
\end{equation}
If $0$ happens to be an eigenvalue of $A$, then $f$ must be orthogonal to the (finite dimensional) kernel of $A$ in order for this equation to have a solution,
and the solution will be determined up to this finite dimensional kernel.
\end{corollary}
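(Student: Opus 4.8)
The statement is an immediate consequence of the spectral picture recorded in Proposition~\ref{p:A-sa-gen} and Corollary~\ref{c:spectrum-A}, so the argument is essentially a transcription. First I would note that for $f\in H$ the duality pairing $\langle f,\phi\rangle$ in \eqref{e:weak} coincides with the $L^2$ inner product $(f,\phi)$ on $V^2$, using the dense continuous embedding $V^2\hookrightarrow H$ to identify $H$ with a subspace of $(V^2)'$ via the $L^2$ pairing. By the very definition of $\dom(A)$, a field $u\in V^2$ then solves \eqref{e:weak} if and only if $u\in\dom(A)$ and $Au=f$; this gives the reformulation $Au=f$.

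For the solvability statement I would invoke that $A$ is self-adjoint with compact resolvent, so by Corollary~\ref{c:spectrum-A} its spectrum is a discrete sequence $\lambda_1\le\lambda_2\le\cdots\to+\infty$ with orthonormal eigenbasis $\{v_n\}$ of $H$. If $0\notin\sigma(A)$, then $A$ is boundedly invertible and $Au=f$ has a unique solution for every $f$. If $0$ is an eigenvalue, it is isolated and of finite multiplicity $m=\dim\ker A<\infty$; writing $f=\sum_n f_nv_n$ with $f_n=(f,v_n)$, the series $u_0:=\sum_{\lambda_n\ne0}\lambda_n^{-1}f_nv_n$ converges in $\dom(A)$ precisely because the nonzero eigenvalues are bounded away from $0$, and one checks $Au_0=\sum_{\lambda_n\ne0}f_nv_n$. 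Hence $Au=f$ is solvable if and only if $f_n=0$ for all $n$ with $\lambda_n=0$, i.e.\ $f\perp\ker A$, and in that case the general solution is $u=u_0+w$ with $w\in\ker A$ arbitrary. Equivalently, self-adjointness yields $\operatorname{ran}(A)=\ker(A)^\perp$ once one knows $\operatorname{ran}(A)$ is closed, and closedness is exactly the spectral gap around the isolated point $0$.

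This corollary carries no real obstacle; the only point that merits a line of care is the identification of the embedding $H\hookrightarrow(V^2)'$ implicit in \eqref{e:weak}, which must be the one induced by the $L^2$ pairing so that the weak solutions of \S\ref{ss:weak-strong} coincide with the elements of $\dom(A)$ solving $Au=f$. Once that is pinned down, the rest is the Fredholm alternative for self-adjoint operators with compact resolvent, applied verbatim.
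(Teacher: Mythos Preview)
Your argument is correct and matches the paper's intended reasoning: the corollary is stated in the paper without proof, as an immediate consequence of the definition of $\dom(A)$ in Proposition~\ref{p:A-sa-gen} and the spectral decomposition in Corollary~\ref{c:spectrum-A}. You have simply written out the Fredholm alternative that the paper leaves implicit, including the careful identification of $H\hookrightarrow(V^2)'$ via the $L^2$ pairing, which is exactly the point one would expand if asked.
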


\begin{remark}
For any bounded Borel function $g:\mathbb{R}\to\mathbb{C}$ we can now define
the bounded operator
\begin{equation}\label{eq:g-of-A-gen}
  g(A)u := \sum_{n=1}^\infty g(\lambda_n)(u,v_n)v_n,
  \qquad u\in H,
\end{equation}
whenever $\sup_n|g(\lambda_n)|<\infty$.  In particular, for
$\sigma\in\mathbb{R}$ we obtain fractional powers $A^\sigma$ and the
associated scale of Hilbert spaces
\begin{equation}
  H_\sigma := \dom(A^{\sigma/2}),\qquad
  \|u\|_{H_\sigma} := \bigl\|A^{\sigma/2}u\bigr\| .
\end{equation}
These abstract spaces will be identified in the next section with
concrete Sobolev spaces via elliptic regularity; for instance, we will show that
$\dom(A)\subset V^4$ and that $\|A u\|$ is equivalent to the $H^4$–norm of
$u$ on $\dom(A)$.
\end{remark}

\section{Elliptic regularity}
\label{s:reg}

\subsection{Discussion of the Agmon-Douglis-Nirenberg theory}
\label{ss:adn}

The existence of a weak solution $u\in V^2$ obtained from the bilinear form $a(\cdot,\cdot)$
provides a natural starting point for our analysis of regularity. In order to control
higher derivatives of $u$ (and, ultimately, of the nonlinear evolution), we need estimates
that are uniform in the order of differentiation. The appropriate tool is the
Agmon--Douglis--Nirenberg (ADN) theory for general elliptic systems with boundary
conditions, cf.~\cite{ADN64,WRL}. In this section we verify that the stationary problem for
$(u,p)$ fits into the ADN framework and then invoke the abstract regularity result.
Throughout this section we assume that $\partial\Omega$ is of class $C^{4+m}$, where
$m\ge0$ will be specified in Theorem~\ref{t:ell-reg}.

In light of Theorem~\ref{t:weak-strong},
the stationary system can be written as a system of four equations for the four unknowns $(u_1, u_2, u_3, p)$:
\begin{equation}
    \Delta^2 u_i + \partial_i p = f_i, \quad i=1,2,3, \qquad 
    \nabla \cdot u = 0 .
\end{equation}
This can be expressed in matrix operator form $L(\partial) U = F$, 
where
\begin{equation}
L(\partial) = 
\begin{bmatrix}
\Delta^2 & 0 & 0 & \partial_1 \\
0 & \Delta^2 & 0 & \partial_2 \\
0 & 0 & \Delta^2 & \partial_3 \\
\partial_1 & \partial_2 & \partial_3 & 0
\end{bmatrix} ,
\qquad
U = 
\begin{bmatrix}
u_1 \\ u_2 \\ u_3 \\ p
\end{bmatrix} ,
\qquad
F = 
\begin{bmatrix}
f_1 \\ f_2 \\ f_3 \\ 0
\end{bmatrix} .
\end{equation}
To apply the ADN theory, we must first assign orders to the rows and columns of this system. 
Let $s_i$ be the order for the $i$-th equation and $t_j$ be the order for the $j$-th variable. 
We choose $s = (s_1, s_2, s_3, s_4) = (4, 4, 4, 1)$ and $t = (t_1, t_2, t_3, t_4) = (0, 0, 0, -3)$. 
With this choice, the order of the differential operator $L_{ij}$ is at most $s_i + t_j$.
By convention, the order of $0$ is $-\infty$.

Next, the principal symbol $\hat L^0(\xi)$ is formed by taking, in each entry $L_{ij}(\partial)$, the terms of order \emph{exactly} $s_i+t_j$, and replacing $\partial_k$ with $i\xi_k$.
Then the {\em Douglis-Nirenberg ellipticity} requires that the matrix $\hat L^0(i\xi)$ is invertible for all $\xi\neq0$.
In our case, it is easy to see that $\hat L^0(\xi)=L(i\xi)$, and that this ellipticity condition is satisfied.
For completeness, we will spell out the confirmation of ellipticity in the next subsection (\S\ref{ss:ver-ell}).

Finally, we must verify that the boundary conditions satisfy the {\em Lopatinskii-Shapiro (or covering) condition}. We have five boundary conditions in total:
\begin{itemize}
    \item $u_j = 0$ on $\partial\Omega$, for $j=1,2,3$. (Three conditions of order 0).
    \item $(1-n\otimes n)Gn - k\omega = 0$ on $\partial\Omega$. This is a vector equation on the tangent plane, providing two independent scalar conditions involving the second derivatives of $u$.
\end{itemize}
Let us write these conditions as $B(x,\partial)U=0$, where $B(x,\partial)$ is a $5\times4$ matrix consisting of differential operators, with coefficients possibly depending on $x\in\partial\Omega$.
The orders of the boundary conditions (i.e., rows of $B$) are $r=(0,0,0,2,2)$.
The principal part $B^0$ is formed by taking, in each entry $B_{ij}(x,\partial)$, the terms of order \emph{exactly} $r_i+t_j$.

We flatten the boundary near a point and work in the half-space $\R^3_+ = \{(y_1,y_2,t) : t > 0\}$ with the boundary at $t=0$.
We apply the Fourier transform in the tangential directions $y = (y_1, y_2) \to \eta = (\eta_1, \eta_2)$. 
The PDE system $L(\partial)U=0$ becomes a system of ordinary differential equations in the normal variable $t$ for the transformed function $\hat{U}(\eta,t)$:
\begin{equation}\label{e:covering-pde}
L(i\eta, \frac{d}{dt})\hat{U} = 0.
\end{equation}
Then, tailored to our case, the \emph{covering condition} is:
For any fixed $\eta\in\R^2\setminus\{0\}$, the only solution $\hat U(\eta,t)$ of the ODE system \eqref{e:covering-pde} on the half–line $t>0$ which
\begin{itemize}
  \item decays as $t\to+\infty$, and
  \item satisfies all homogeneous principal boundary conditions
	\begin{equation}\label{e:covering-bc}
    	B^0(i\eta, \frac{d}{dt})\hat U|_{t=0}=0,
	\end{equation}
\end{itemize}
is the trivial solution $\hat U(\eta,\cdot)\equiv0$.
We will verify the covering condition for our system in \S\ref{ss:ver-cov}.

Once ellipticity and the Lopatinskii--Shapiro (covering) condition have been
verified for the pair $(L,B)$, the ADN theory yields full interior and boundary
regularity for weak solutions. We state here the consequence that will be used later
for the nonlinear problem.

%We summarize below the version tailored to our stationary Stokes–type system for $(u,p)$.

\begin{theorem}\label{t:ell-reg}
Assume that $\partial\Omega$ is of class $C^{4+m}$ for an integer $m\ge0$. Let
$f\in H^m(\Omega)^3$ and let $(u,p)$ be a weak solution of the stationary problem
\begin{equation}\label{eq:strong-stationary}
\begin{cases}
\Delta^2 u + \nabla p = f,& \text{in }\Omega,\\
\nabla\cdot u = 0,& \text{in }\Omega,\\
u = 0,& \text{on }\partial\Omega,\\
(1-n\otimes n)Gn = k\omega,& \text{on }\partial\Omega,
\end{cases}
\end{equation}
in the sense of Theorem~\ref{t:weak-strong} (i.e.\ $u\in V^2$, $f\in H^{-2}(\Omega)^3$,
and there exists $p\in H^{-1}(\Omega)$ such that $\Delta^2u+\nabla p=f$ in $\mathcal D'$
and \eqref{eq:strong-stationary}$_4$ holds in the weak sense).
Then
\begin{equation}
u\in H^{m+4}(\Omega)^3,\qquad p\in H^{m+3}(\Omega),
\end{equation}
and there exists a constant $C=C(m,\Omega,\alpha,\beta,\gamma,\ell)>0$ such that
\begin{equation}\label{eq:ADN-estimate}
\|u\|_{H^{m+4}(\Omega)} + \|p\|_{H^{m+3}(\Omega)}
\le C\bigl(\|f\|_{H^{m}(\Omega)} + \|u\|_{L^2(\Omega)}\bigr).
\end{equation}
In particular, for $f\in L^2(\Omega)^3$ we obtain $u\in V^4$ and $p\in H^3(\Omega)$.
\end{theorem}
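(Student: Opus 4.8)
The plan is to apply the abstract Agmon--Douglis--Nirenberg regularity theorem once ellipticity (\S\ref{ss:ver-ell}) and the Lopatinskii--Shapiro covering condition (\S\ref{ss:ver-cov}) have been verified for the pair $(L,B)$ with the Douglis--Nirenberg weights $s=(4,4,4,1)$, $t=(0,0,0,-3)$, $r=(0,0,0,2,2)$. First I would record that, by Theorem~\ref{t:weak-strong}, a weak solution $u\in V^2$ with $f\in H^{-2}(\Omega)^3$ gives rise to a pressure $p\in H^{-1}(\Omega)$ with $\Delta^2u+\nabla p=f$ in $\mathcal D'(\Omega)$, and that the boundary conditions $u=0$ and $(1-n\otimes n)Gn=k\omega$ are satisfied (the latter in the weak trace sense when $u\in V^4$, but here as part of the hypotheses of the theorem). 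Thus $(u,p)$ is a genuine ADN weak solution of \eqref{eq:strong-stationary}.

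Second, I would invoke the ADN \emph{a priori} estimate: for a properly elliptic system satisfying the covering condition with $C^{4+m}$ boundary, any solution with $f\in H^m$ satisfies $u\in H^{m+4}_{\mathrm{loc}}$, $p\in H^{m+3}_{\mathrm{loc}}$ together with the global bound $\|u\|_{H^{m+4}(\Omega)}+\|p\|_{H^{m+3}(\Omega)}\le C(\|f\|_{H^m(\Omega)}+\|U\|_{L^2})$, where the low-order term $\|U\|_{L^2}$ accounts for the possible kernel. One must then absorb $\|p\|_{L^2}$ into the left side; this is standard once one normalizes $p$ by $\int_\Omega p=0$ (the pressure is determined only up to a constant), using the estimate $\|p\|_{L^2}\lesssim\|\nabla p\|_{H^{-1}}=\|f-\Delta^2u\|_{H^{-1}}\lesssim\|f\|_{H^{-2}}+\|u\|_{H^2}$ and then the $H^2$-control of $u$ in terms of $\|f\|_{H^{-2}}$ and $\|u\|_{L^2}$ coming from the G\aa rding inequality of Theorem~\ref{t:garding} (equivalently Corollary~\ref{c:ell-ex}). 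This reduces the right-hand side to $\|f\|_{H^m(\Omega)}+\|u\|_{L^2(\Omega)}$ as claimed in \eqref{eq:ADN-estimate}. The case $f\in L^2$ corresponds to $m=0$ and yields $u\in H^4(\Omega)^3$; since $u\in V^2$ and $\nabla\cdot u=0$, we have $u\in V\cap H^4(\Omega)^3=V^4$, and $p\in H^3(\Omega)$.

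Third, I would treat the bootstrap for $m\ge1$ carefully: one cannot apply the $H^{m+4}$ estimate directly to a merely $H^2$ solution. Instead one argues inductively --- having shown $u\in H^{k+4}$ for the value $k=m-1$ and $f\in H^{m-1}$, one notes $f\in H^m$ and the already-established higher regularity of $u$ license the next application of the ADN estimate with weight $m$, raising the regularity to $H^{m+4}$; interior estimates combine with boundary estimates near each point of $\partial\Omega$ via a partition of unity subordinate to boundary charts, and $\partial\Omega\in C^{4+m}$ is exactly what is needed to differentiate the flattened boundary operator $m$ times with bounded coefficients. Iterating from the base case $m=0$ gives the full statement.

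The main obstacle is not any single estimate but the bookkeeping of the two genuinely nonstandard points in the ADN setup, both of which are delegated to later subsections: the Douglis--Nirenberg ellipticity of $\hat L^0(\xi)=L(i\xi)$ (\S\ref{ss:ver-ell}) and, more delicately, the covering condition for the \emph{tangential} boundary operator $(1-n\otimes n)Gn-k\omega$, which is only two scalar conditions of order $2$ sitting alongside three Dirichlet conditions of order $0$ --- one must check that, at each boundary point, the principal parts $B^0$ do not annihilate any nonzero decaying solution of the constant-coefficient ODE system \eqref{e:covering-pde}, for all $\eta\ne0$ (\S\ref{ss:ver-cov}). Granting those two verifications, the proof of Theorem~\ref{t:ell-reg} itself is a direct quotation of the ADN estimate plus the elementary absorption of the pressure and low-order terms described above.
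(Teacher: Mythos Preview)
Your proposal is correct and follows essentially the same route as the paper: verify Douglis--Nirenberg ellipticity and the Lopatinskii--Shapiro condition (delegated to \S\ref{ss:ver-ell}--\S\ref{ss:ver-cov}), recover the pressure via Theorem~\ref{t:weak-strong}, and then invoke the abstract ADN regularity theorem. The paper's own proof is in fact much terser than yours---it simply cites the general ADN result \cite[Thm.~10.5.1]{WRL} without spelling out the pressure-absorption step or the inductive bootstrap in $m$; your added detail on normalizing $p$ and iterating from $m=0$ is sound and fills in what the paper leaves to the cited reference.
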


\begin{proof}
By Proposition~\ref{p:dn-ell} and Theorem~\ref{t:ls-cov} below, the pair $(L,B)$ is Douglis--Nirenberg elliptic and the boundary conditions satisfy
the Lopatinskii--Shapiro covering condition. The regularity result
\eqref{eq:ADN-estimate} then follows from the general ADN theorem
(e.g., \cite[Thm.~10.5.1]{WRL}). The passage from the
pressure-free variational formulation to the coupled system with pressure
was already established in Theorem~\ref{t:weak-strong}, so the ADN result applies
to the pair $(u,p)$ constructed there.
\end{proof}

This theorem immediately yields the following inclusion for the domain of $A$.

\begin{corollary}\label{c:dom-A}
If $\partial\Omega$ is of class $C^{4}$, then the operator $A$
introduced in \S\ref{ss:operator} satisfies
\begin{equation}
\dom(A) \subset V^{4},
\end{equation}
and there exists a constant $C>0$ such that
\begin{equation}\label{eq:A-graph-H4}
  \|u\|_{H^{4}(\Omega)}
  \le C\bigl(\|Au\|+\|u\|\bigr)
  \qquad\text{for all }u\in\dom(A).
\end{equation}
\end{corollary}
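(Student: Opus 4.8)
The plan is to read the corollary off directly from the Agmon--Douglis--Nirenberg regularity statement, Theorem~\ref{t:ell-reg}, applied at level $m=0$. First I would fix $u\in\dom(A)$ and set $f:=Au$. By the definition of $\dom(A)$ and Proposition~\ref{p:A-sa-gen} we have $f\in H\subset L^2(\Omega)^3$, and \eqref{eq:A-form} gives $a(u,\phi)=(f,\phi)$ for all $\phi\in V^2$; since $V^2$ is dense in $H$ this identity determines $f$ uniquely and confirms $f=Au$. In particular $u\in V^2$ solves the pressure-free weak formulation \eqref{e:weak} with right-hand side $f\in L^2(\Omega)^3$.

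Next I would appeal to Theorem~\ref{t:weak-strong}(a): since $f\in L^2(\Omega)^3\subset H^{-2}(\Omega)^3$, there exists $p\in H^{-1}(\Omega)$ with $\Delta^2u+\nabla p=f$ in $\mathcal D'(\Omega)$, and---this being precisely the content of part~(a)---the pair $(u,p)$ is then a weak solution of the stationary system \eqref{eq:strong-stationary} in the exact sense demanded in the hypotheses of Theorem~\ref{t:ell-reg}, with the wall-eddy boundary condition \eqref{eq:strong-stationary}$_4$ understood in the weak sense inherited from \eqref{e:weak}.

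Since $\partial\Omega\in C^4$, Theorem~\ref{t:ell-reg} with $m=0$ now applies and yields $u\in H^4(\Omega)^3$, $p\in H^3(\Omega)$, together with
\begin{equation}
\|u\|_{H^4(\Omega)}+\|p\|_{H^3(\Omega)}\le C\bigl(\|f\|_{L^2(\Omega)}+\|u\|_{L^2(\Omega)}\bigr).
\end{equation}
As $u\in V^2$ and $u\in H^4(\Omega)^3$, by definition $u\in V^2\cap H^4(\Omega)^3=V^4$; since $u\in\dom(A)$ was arbitrary this proves $\dom(A)\subset V^4$. Discarding the nonnegative term $\|p\|_{H^3(\Omega)}$ and recalling $f=Au$ (with $\|\cdot\|$ the $L^2$-norm on $H$) gives precisely \eqref{eq:A-graph-H4}.

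I do not expect any genuine analytical obstacle here: the corollary is a bookkeeping consequence of Theorems~\ref{t:weak-strong} and~\ref{t:ell-reg}. The only two points deserving an explicit line are the identification $f=Au$ (which uses density of $V^2$ in $H$) and the observation that a weak solution in the operator-theoretic sense is a weak solution in the sense required by Theorem~\ref{t:ell-reg}; the latter is immediate from Theorem~\ref{t:weak-strong}(a). All the substantive work---the G{\aa}rding inequality, Douglis--Nirenberg ellipticity, and the Lopatinskii--Shapiro covering condition---has already been carried out, so the proof is short.
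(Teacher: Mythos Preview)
Your proposal is correct and follows essentially the same route as the paper's own proof: take $u\in\dom(A)$, set $f=Au\in H\subset L^2(\Omega)^3$, invoke Theorem~\ref{t:weak-strong}(a) to obtain the pressure and recognize $(u,p)$ as a weak solution of \eqref{eq:strong-stationary}, then apply Theorem~\ref{t:ell-reg} with $m=0$ and read off both the inclusion $\dom(A)\subset V^4$ and the estimate \eqref{eq:A-graph-H4}. The only extra step you include---the density remark to identify $f$ uniquely---is harmless but unnecessary, since $f=Au$ is already the definition.
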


\begin{proof}
Let $u\in\dom(A)$. By definition of $A$ there exists $f\in H$ such that
\begin{equation}
a(u,\phi)=(f,\phi)_{H}
\qquad\text{for all }\phi\in V^{2}.
\end{equation}
By Theorem~\ref{t:weak-strong} there is a pressure $p\in H^{-1}(\Omega)$ such that the pair
$(u,p)$ is a weak solution of \eqref{eq:strong-stationary} with right-hand side $f$.  Since
$f\in H\subset L^{2}(\Omega)^{3}$ and $\partial\Omega$ is of class $C^{4}$, we can
apply Theorem~\ref{t:ell-reg} with $m=0$ to conclude that $u\in H^{4}(\Omega)^{3}$ and
\begin{equation}
\|u\|_{H^{4}(\Omega)}+\|p\|_{H^{3}(\Omega)}
\le C\bigl(\|f\|_{L^{2}(\Omega)}+\|u\|_{L^{2}(\Omega)}\bigr)
= C\bigl(\|Au\| + \|u\|\bigr),
\end{equation}
giving \eqref{eq:A-graph-H4}.  As $u\in V^{2}$ by definition of $\dom(A)$,
this shows that $u\in V^{2}\cap H^{4}(\Omega)^{3}=V^{4}$, and hence
$\dom(A)\subset V^{4}$.
\end{proof}

\subsection{Verification of ellipticity}
\label{ss:ver-ell}

We first check that the interior operator $L(\partial)$ is elliptic in the
Douglis–Nirenberg sense for the choice of orders $(s_i)$ and $(t_j)$ specified
in \S\ref{ss:adn}.  This amounts to showing that the principal symbol $\hat L^0(\xi)$ is invertible
for every nonzero covector $\xi$.

\begin{proposition}\label{p:dn-ell}
The system $L(\partial)$ is elliptic in the sense of Douglis-Nirenberg.
\end{proposition}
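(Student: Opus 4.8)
The plan is to reduce the Douglis--Nirenberg ellipticity of $L(\partial)$ to an elementary determinant computation. First I would check the order bookkeeping: with the orders $s=(4,4,4,1)$ and $t=(0,0,0,-3)$ fixed in \S\ref{ss:adn}, every nonzero entry $L_{ij}(\partial)$ is homogeneous of degree \emph{exactly} $s_i+t_j$ --- the diagonal entries $\Delta^2$ have degree $4=s_i+t_i$, the gradient entries $\partial_i$ in the last column and the divergence entries $\partial_j$ in the last row have degree $1=s_i+t_4=s_4+t_j$, and the $(4,4)$ entry is $0$, of order $-\infty<-2=s_4+t_4$. Hence no lower-order remainders are discarded and no spurious terms are added, so $\hat L^0(\xi)=L(i\xi)$ exactly; and since $\Delta^2$ corresponds to $(-|\xi|^2)^2=|\xi|^4$ we obtain
\begin{equation}
\hat L^0(\xi)=
\begin{bmatrix}
|\xi|^4 & 0 & 0 & i\xi_1\\
0 & |\xi|^4 & 0 & i\xi_2\\
0 & 0 & |\xi|^4 & i\xi_3\\
i\xi_1 & i\xi_2 & i\xi_3 & 0
\end{bmatrix},\qquad \xi\in\R^3.
\end{equation}

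Next I would compute $\det\hat L^0(\xi)$ and show it never vanishes for $\xi\neq0$. Writing the matrix in block form with invertible leading block $|\xi|^4 I_3$, column $b=i\xi$, row $b^{\top}$ and zero corner, the Schur complement relative to $|\xi|^4 I_3$ gives
\begin{equation}
\det\hat L^0(\xi)=|\xi|^{12}\,\bigl(0-(i\xi)^{\top}(|\xi|^4 I_3)^{-1}(i\xi)\bigr)
=|\xi|^{12}\cdot|\xi|^{-2}=|\xi|^{10};
\end{equation}
equivalently, cofactor expansion along the last column yields $\sum_{k=1}^{3}\xi_k^{2}\,|\xi|^{8}=|\xi|^{10}$, a homogeneous polynomial of degree $10=\sum_i s_i+\sum_j t_j=13-3$, exactly as it should be. In particular $\det\hat L^0(\xi)=|\xi|^{10}\neq0$ for every $\xi\in\R^3\setminus\{0\}$, so $\hat L^0(\xi)$ is invertible, which is precisely the Douglis--Nirenberg ellipticity of $L(\partial)$.

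I do not expect a genuine obstacle: the heart of the argument is a two-line determinant, and the only place that calls for care is the order bookkeeping that justifies $\hat L^0(\xi)=L(i\xi)$, which is why I would carry that out first. For later use I would also record the consequence that, the spatial dimension being $3\ge3$, Douglis--Nirenberg ellipticity automatically upgrades to proper ellipticity: for linearly independent real covectors $\xi,\zeta$ the one-variable polynomial $\tau\mapsto\det\hat L^0(\xi+\tau\zeta)=\bigl(|\xi|^2+2\tau\,\xi\cdot\zeta+\tau^2|\zeta|^2\bigr)^{5}$ has a strictly negative discriminant in its quadratic factor (strict Cauchy--Schwarz), hence exactly $5$ roots, counted with multiplicity, in the open upper half-plane. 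This pins the number of decaying normal modes at $5$, matching the five boundary operators in $B$, and sets the stage for the Lopatinskii--Shapiro verification in \S\ref{ss:ver-cov}.
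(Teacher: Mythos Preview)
Your proposal is correct and follows essentially the same approach as the paper: both compute the principal symbol, evaluate its determinant via the Schur complement relative to the $|\xi|^4 I_3$ block, and obtain $|\xi|^{10}$. Your version adds a careful order-bookkeeping step justifying $\hat L^0(\xi)=L(i\xi)$ and a useful remark on proper ellipticity (five roots in the upper half-plane), neither of which the paper spells out in the proof itself, but the core argument is identical.
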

\begin{proof}
The principal symbol is
\begin{equation}
\hat L^0(\xi) = 
\begin{bmatrix}
|\xi|^4 & 0 & 0 & i\xi_1 \\
0 & |\xi|^4 & 0 & i\xi_2 \\
0 & 0 & |\xi|^4 & i\xi_3 \\
i\xi_1 & i\xi_2 & i\xi_3 & 0
\end{bmatrix}
\end{equation}
The determinant can be computed using the formula for block matrices:
\begin{equation}
\det\hat L^0(\xi) = \det(|\xi|^4 I_3) \cdot \det(0 - (i\xi) (|\xi|^4 I_3)^{-1} (i\xi)^T) = |\xi|^{12} \left( -\frac{1}{|\xi|^4} (i\xi)\cdot(i\xi)^\top \right)
\end{equation}
\begin{equation}
= |\xi|^{12} \left( -\frac{1}{|\xi|^4} (-|\xi|^2) \right) = |\xi|^{12} \left( \frac{1}{|\xi|^2} \right) = |\xi|^{10}.
\end{equation}
Since $\det\hat L^0(\xi) = |\xi|^{10} \neq 0$ for any $\xi \neq 0$, the system is elliptic.
\end{proof}

\subsection{Verification of the covering condition}
\label{ss:ver-cov}

In this subsection, we provide a direct proof that the boundary value problem satisfies the Lopatinskii-Shapiro condition. The procedure is to construct the general decaying solution to the stationary, 
homogeneous system in the half-space $\mathbb{R}^3_+ = \{x\in\R^3 : x_3 > 0\}$ and show that applying the homogeneous boundary conditions forces this solution to be trivial.
In these coordinates, the outward unit normal is $n = -e_3 = (0,0,-1)$.
When convenient, we will also use the identification $y=(y_1,y_2)=(x_1,x_2)$ and $t=x_3$.

We work with the Fourier variables $\eta$ conjugate to the tangential directions $y$, and the corresponding unknowns $\hat{U}=(\hat u,\hat p)$. 
The system \eqref{e:covering-pde} is 
\begin{equation}\label{e:covering-ode}
\begin{split}
\big( \frac{d^2}{dt^2} - |\eta|^2 \big)^2\begin{bmatrix}\hat u_1\\\hat u_2\end{bmatrix} + i\eta\hat p &= 0,\\
\big( \frac{d^2}{dt^2} - |\eta|^2 \big)^2\hat u_3 + \frac{d\hat p}{dt} &= 0, \\
\frac{d\hat u_3}{dt} + i\eta_1\hat u_1 + i\eta_2\hat u_2 &= 0 .
\end{split}
\end{equation}

We have five scalar boundary conditions. 
The homogeneous Dirichlet conditions on $u$ are simply $u=0$, while the principal part of the wall-eddy boundary conditions are
\begin{equation}
\begin{bmatrix}
\partial_3\omega_1 + \gamma\partial_1\omega_3 \\
\partial_3\omega_2 + \gamma\partial_2\omega_3
\end{bmatrix}
%=
%\begin{bmatrix}
%\partial_3\partial_2 u_3 - \partial_3^2 u_2 + \gamma\bigl(\partial_1^2 u_2 - \partial_1\partial_2 u_1\bigr) \\
%\partial_3^2 u_1 - \partial_3\partial_1 u_3 + \gamma\bigl(\partial_2\partial_1 u_2 - \partial_2^2 u_1\bigr)
%\end{bmatrix}
= 0 ,
\end{equation}
where $\omega=\nabla\times u$.
Thus in Fourier variables, the full set of boundary conditions reads
\begin{equation}\label{e:covering-ic}
\begin{split}
\hat u_1 = \hat u_1 = \hat u_3 &= 0,\\
\frac{d^2\hat u_2}{dt^2} - i\eta_2\frac{d\hat u_3}{dt} + \gamma\eta_1 ( \eta_2\hat u_1 - \eta_1\hat u_2 ) &=0 , \\
\frac{d^2\hat u_1}{dt^2} - i\eta_1\frac{d\hat u_3}{dt} + \gamma\eta_2 ( \eta_2\hat u_1 - \eta_1\hat u_2 ) &=0 ,
\end{split}
\end{equation}
which are imposed at $t=0$.
Our goal is now to show that the initial value problem \eqref{e:covering-ode}-\eqref{e:covering-ic} has no nontrivial solution that decays at $t\to+\infty$.

\subsubsection{Characterization of decaying solutions}

We fix $\eta\in\R^2\setminus\{0\}$, and consider the initial value problem \eqref{e:covering-ode}-\eqref{e:covering-ic}.

\begin{proposition}\label{p:sol-decay}
The general decaying solution $(\hat{u}, \hat{p})$ is a 5-parameter family of functions of the form 
\begin{equation}\label{e:sol-decay}
\begin{split}
\hat{u}(t) &= e^{-|\eta|t}(a + tb) +a_0 \hat{v}(t) , \\
\hat{p}(t) &= a_0 e^{-|\eta|t} ,
\end{split}
\end{equation}
where $a,b\in\C^3$ satisfy the constraint \eqref{e:a-b-constraint}, 
$a_0\in\C$,
and $\hat{v}(t)$ is given by \eqref{e:u-part}.
\end{proposition}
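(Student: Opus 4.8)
The plan is to solve the constant-coefficient ODE system \eqref{e:covering-ode} explicitly, organizing the analysis according to whether $\hat p$ vanishes identically or not. The key observation is that the system decouples: apply $\tfrac{d}{dt}$ to the third equation (the divergence constraint) and combine it with $i\eta_1$ times the first component equation plus $i\eta_2$ times the second to eliminate $\hat u$; this produces a scalar ODE for $\hat p$. Since $\mathrm{div}$ of the momentum equation gives $(\tfrac{d^2}{dt^2}-|\eta|^2)^2(i\eta_1\hat u_1 + i\eta_2\hat u_2 + \tfrac{d\hat u_3}{dt}) + (-|\eta|^2 + \tfrac{d^2}{dt^2})\hat p = 0$, and the parenthesized velocity combination is exactly the Fourier transform of $\nabla\cdot u$, which vanishes by the third equation, we get $(\tfrac{d^2}{dt^2}-|\eta|^2)\hat p = 0$. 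The only decaying solution is $\hat p(t) = a_0 e^{-|\eta|t}$, which already fixes the pressure part of \eqref{e:sol-decay}.

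Next I would treat the velocity. Each component $\hat u_j$ satisfies $(\tfrac{d^2}{dt^2}-|\eta|^2)^2\hat u_j = -(\text{lower-order forcing from }\hat p)$, an inhomogeneous fourth-order ODE with characteristic root $\pm|\eta|$ of multiplicity two. The decaying solutions of the homogeneous part are spanned by $e^{-|\eta|t}$ and $te^{-|\eta|t}$, contributing the $e^{-|\eta|t}(a+tb)$ term with $a,b\in\C^3$; the particular solution driven by $\hat p = a_0 e^{-|\eta|t}$ can be computed by undetermined coefficients (the forcing is resonant, so it will involve $t^2 e^{-|\eta|t}$), and this is precisely what is recorded as $a_0\hat v(t)$ in \eqref{e:u-part}. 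At this stage the candidate solution depends on $a,b\in\C^3$ and $a_0\in\C$, i.e.\ seven parameters, but not all combinations of $a,b$ are admissible: the divergence-free constraint $i\eta_1\hat u_1 + i\eta_2\hat u_2 + \tfrac{d\hat u_3}{dt} = 0$ must hold for all $t>0$, not just at the boundary. Substituting the explicit form and matching the coefficients of $e^{-|\eta|t}$, $te^{-|\eta|t}$ (and the resonant terms from $\hat v$) yields linear relations between $a$, $b$, and $a_0$; generically these cut the seven-parameter family down to the claimed five parameters. That relation is what the statement abbreviates as ``the constraint \eqref{e:a-b-constraint}.''

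The remaining task is bookkeeping: verify that the constraint indeed has a five-dimensional solution set and that the stated form \eqref{e:sol-decay} exhausts all decaying solutions of \eqref{e:covering-ode} (without yet imposing the boundary conditions \eqref{e:covering-ic}). For the latter I would note that the full solution space of the $8$th-order-per-component system before imposing decay and the constraint is larger, but the decay requirement kills all modes growing like $e^{+|\eta|t}$ or $te^{+|\eta|t}$, and the divergence constraint is a single scalar ODE relation that, together with $\hat p$ already being pinned to a one-parameter family, removes exactly the right number of degrees of freedom. I would carry out the coefficient-matching once, carefully, using $|\eta|^2 = \eta_1^2 + \eta_2^2$ to simplify, and present the resulting constraint as a displayed equation labelled \eqref{e:a-b-constraint} together with the explicit formula \eqref{e:u-part} for $\hat v$.

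The main obstacle I anticipate is the resonant particular solution: because the pressure forcing $e^{-|\eta|t}$ lies in the kernel of $(\tfrac{d}{dt}-|\eta|)$ acting from the right but the operator on $\hat u$ is $(\tfrac{d^2}{dt^2}-|\eta|^2)^2 = (\tfrac{d}{dt}-|\eta|)^2(\tfrac{d}{dt}+|\eta|)^2$, the forcing $e^{-|\eta|t}$ collides with a double root, so the particular solution picks up a factor $t^2$; getting the constant in front of $t^2 e^{-|\eta|t}$ right (and correctly propagating it into the divergence constraint) is the delicate computational step. Everything else is linear algebra over $\C$ with $\eta$ as a parameter, and the only subtlety there is keeping track of which linear combinations survive — but since the final count of free parameters is asserted to be five, the computation is self-checking.
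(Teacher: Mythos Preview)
Your proposal is correct and follows essentially the same route as the paper: derive $(\tfrac{d^2}{dt^2}-|\eta|^2)\hat p=0$ by taking the divergence of the momentum equations, solve the resulting forced biharmonic equation for $\hat u$ with a resonant $t^2e^{-|\eta|t}$ particular solution, and impose the divergence constraint to reduce to five free parameters. The only organizational difference is that the paper first corrects the particular solution $\hat v$ to be divergence-free and then applies the constraint solely to the homogeneous part (yielding \eqref{e:a-b-constraint} as two relations fixing $a_3,b_3$ in terms of $a_1,a_2,b_1,b_2$), whereas you impose the constraint on the full solution; the outcome is identical.
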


\begin{proof}
The characteristic equation for the ODE system \eqref{e:covering-ode}, or equivalently \eqref{e:covering-pde}, is 
\begin{equation}
\det L(i\eta, \lambda) = 0 .
\end{equation}
As computed previously, $\det L(i\xi) = |\xi|^{10} = (|\eta|^2 + \xi_3^2)^5$. 
Replacing $\xi_3$ with $-i\lambda$, the characteristic equation for $\lambda$ is
\begin{equation}
(|\eta|^2 - \lambda^2)^5 = 0 ,
\end{equation}
which has roots $\lambda = \pm |\eta|$, each with multiplicity 5. 
The solutions that decay as $t \to \infty$ are those corresponding to the quintuple root $\lambda=-|\eta|$:
\begin{equation}
\hat U(t) = e^{-|\eta|t} (U_0 + tU_1 + t^2U_2 + t^3U_3 + t^4U_4 ) ,
\end{equation}
where the vectors $U_j\in\R^4$ should be chosen such that \eqref{e:covering-ode} is satisfied. %$L(i\eta, -|\eta|)U=0$
One thing this general consideration buys us is that now we can freely differentiate \eqref{e:covering-ode} with respect to $t$.

The pressure must satisfy 
$(\frac{d^2}{dt^2} - |\eta|^2)\hat{p} = 0$, so the only decaying solution is $\hat{p}(t) = a_0 e^{-|\eta|t}$. 
Then \eqref{e:covering-ode} becomes the forced biharmonic equation
\begin{equation}\label{e:forced-bihar}
\big(\frac{d^2}{dt^2} - |\eta|^2\big)^2 \hat{u}(t) = a_0 e^{-|\eta|t} \begin{bmatrix} -i\eta_1 \\ -i\eta_2 \\ |\eta| \end{bmatrix} ,
\end{equation}
plus the divergence-free condition $\frac{d\hat u_3}{dt} + i\eta_1\hat u_1 + i\eta_2\hat u_2 = 0$.
The decaying solutions to the homogeneous counterpart of \eqref{e:forced-bihar} are given by 
$\hat u(t)=e^{-|\eta|t}(a+tb)$ with $a,b\in\C^3$.
Then the divergence-free condition yields
\begin{equation}\label{e:a-b-constraint}
b_3 = \frac{i\eta_1b_1+i\eta_2b_2}{|\eta|},
\qquad
a_3 = \frac{i\eta_1a_1 + i\eta_2a_2 + b_3}{|\eta|}.
\end{equation}
Thus the space of decaying solutions to the homogeneous counterpart of \eqref{e:forced-bihar} is 4-dimensional after imposing the divergence-free condition.
A particular solution $\hat{v}(t)$, corrected to be divergence-free, is found to be
\begin{equation}\label{e:u-part}
\hat{v}(t) = \frac{e^{-|\eta|t} }{8|\eta|^3} \begin{bmatrix} -i\eta_1|\eta|t^2  \\ -i\eta_2|\eta|t^2 \\ |\eta|^2t^2-2|\eta|t-2 \end{bmatrix} .
\end{equation}
The full solution space for $\hat{u}$ is the sum of the homogeneous and particular solutions, which is parametrized by 5 free constants (4 from the homogeneous part and $a_0$).
\end{proof}

\subsubsection{Boundary conditions}

We apply the five homogeneous boundary conditions to the general solution at $t=0$. 
This will yield a system of linear equations for the five free parameters.

The Dirichlet conditions $\hat u(0)=0$ imply $a + a_0\hat{v}(0)=0$, and hence 
\begin{equation}\label{e:a-b-dir}
a_1=a_2=0,\qquad a_3 = \frac{a_0}{4|\eta|^3}.
\end{equation}
The wall-eddy boundary conditions from \eqref{e:covering-ic} are
\begin{equation}\label{e:cov-wall-eddy}
\hat u_2''(0)-i\eta_2\,\hat u_3'(0)=0,
\qquad
\hat u_1''(0)-i\eta_1\,\hat u_3'(0)=0.
\end{equation}
Differentiating the general solution \eqref{e:sol-decay} and evaluating them at $t=0$ give
\begin{equation}
\hat u'(0)=-|\eta|a+b,\qquad
\hat u''(0)=|\eta|^2a-2|\eta|b+a_0\hat v''(0),
\end{equation}
with
\begin{equation}
\hat v''(0)=\dfrac{1}{4|\eta|^2}
\begin{bmatrix}
-i\eta_1\\
-i\eta_2\\
2|\eta|
\end{bmatrix}.
\end{equation}
Hence we have
\begin{equation}
\hat u_3'(0)=-|\eta|a_3+b_3=b_3-\frac{a_0}{4|\eta|^2},
\qquad
\hat u_j''(0)=-2|\eta|b_j+\frac{i\eta_ja_0}{4|\eta|^2},\ j=1,2.
\end{equation}
Substituting these into \eqref{e:cov-wall-eddy}, one checks that the $a_0$–terms cancel, and we obtain
\begin{equation}\label{e:a-b-we}
2|\eta|\,b_2+i\eta_2 b_3=0,\qquad
2|\eta|\,b_1+i\eta_1 b_3=0.
\tag{80}
\end{equation}

\subsubsection{Invertibility of the system}

We now collect all relations among the parameters $a,b\in\mathbb{C}^3$ and
$a_0\in\mathbb{C}$.  From Proposition~\ref{p:sol-decay} we already know that $a,b$ satisfy
the divergence constraints \eqref{e:a-b-constraint},
while the Dirichlet conditions give \eqref{e:a-b-dir}, and the wall–eddy
boundary conditions yield \eqref{e:a-b-we}.

From \eqref{e:a-b-we} we can express $b_1$ and $b_2$ in terms of $b_3$,
\begin{equation}\label{e:b-b}
b_1=-\frac{i\eta_1}{2|\eta|}\,b_3,\qquad
b_2=-\frac{i\eta_2}{2|\eta|}\,b_3.
\end{equation}
Substituting these into the first divergence constraint in \eqref{e:a-b-constraint}
gives
\begin{equation}
b_3=\frac{i}{|\eta|}\bigl(\eta_1 b_1+\eta_2 b_2\bigr)
=\frac{i}{|\eta|}
\Bigl(\eta_1\Bigl(-\frac{i\eta_1}{2|\eta|}b_3\Bigr)
       +\eta_2\Bigl(-\frac{i\eta_2}{2|\eta|}b_3\Bigr)\Bigr)
=\frac{|\eta|^2}{2|\eta|^2}b_3=\frac12 b_3,
\end{equation}
and therefore $b_3=0$.  By \eqref{e:b-b} we then also have $b_1=b_2=0$.

With $b_3=0$, the second relation in \eqref{e:a-b-constraint} becomes
\begin{equation}
a_3=\frac{i(\eta_1 a_1+\eta_2 a_2)}{|\eta|}.
\end{equation}
But $a_1=a_2=0$ by \eqref{e:a-b-dir}, hence $a_3=0$.  
By \eqref{e:a-b-dir} again, we have $a_0=0$ as well.

Summarizing, we have shown that
\begin{equation}
a_0=0,\qquad a=0,\qquad b=0.
\end{equation}
Thus the only decaying solution of the ODE system \eqref{e:covering-ode}
satisfying the homogeneous principal boundary conditions is the trivial one,
$\hat u\equiv0$ and $\hat p\equiv0$.

\begin{theorem}\label{t:ls-cov}
For every fixed $\eta\in\mathbb{R}^2\setminus\{0\}$, the unique solution of
\eqref{e:covering-ode}-\eqref{e:covering-ic} on $t>0$ that decays as
$t\to+\infty$ and satisfies the homogeneous principal boundary conditions
is the trivial solution.  Consequently, the Dirichlet--wall–eddy boundary
value problem associated with $L(\partial)$ satisfies the
Lopatinskii--Shapiro covering condition.
\end{theorem}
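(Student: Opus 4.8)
The plan is to synthesize the computations of the three preceding subsubsections; once these are in place the theorem reduces to a short piece of linear algebra together with the observation that its conclusion \emph{is} the covering condition. First I would recall the reduction from \S\ref{ss:adn}: after flattening $\partial\Omega$ near a boundary point and applying the tangential Fourier transform, the Lopatinskii--Shapiro condition for the pair $(L,B)$ is equivalent to the statement that, for every fixed $\eta\in\R^2\setminus\{0\}$, the model problem \eqref{e:covering-ode}--\eqref{e:covering-ic} on the half-line $t>0$ admits no nontrivial solution decaying as $t\to+\infty$. Since $L$ has constant coefficients and the boundary operators have smooth coefficients, and the covering condition is a pointwise condition on $\partial\Omega$, it is enough to check it for the model configuration $n=-e_3$ used above.

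Next I would invoke Proposition~\ref{p:sol-decay}, which already reduces the decaying-solution space to the explicit $5$-parameter family \eqref{e:sol-decay}, governed by vectors $a,b\in\C^3$ subject to the divergence relations \eqref{e:a-b-constraint}, the pressure amplitude $a_0\in\C$, and the fixed divergence-corrected particular solution $\hat v$ of \eqref{e:u-part}. Imposing the five homogeneous principal boundary conditions on this family produces five linear equations for $(a,b,a_0)$, which I would organize in two batches. (i) The three Dirichlet conditions $\hat u(0)=0$, together with the value $\hat v(0)$, give the relations \eqref{e:a-b-dir}: $a_1=a_2=0$ and $a_3=a_0/4|\eta|^3$. (ii) Once $\hat u(0)=0$ is enforced, all tangential-derivative terms in \eqref{e:covering-ic} --- in particular every $\gamma$-proportional term --- vanish at $t=0$, so the two principal wall--eddy conditions collapse to the reduced pair \eqref{e:cov-wall-eddy}; differentiating \eqref{e:sol-decay} twice and expressing $\hat u'(0)$ and $\hat u''(0)$ through $a$, $b$, $a_0$ by means of $\hat v''(0)$ turns \eqref{e:cov-wall-eddy} into \eqref{e:a-b-we}.

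The decisive algebraic step --- the one worth spelling out carefully --- is that in passing from \eqref{e:cov-wall-eddy} to \eqref{e:a-b-we} the $a_0$-contributions cancel identically, so the wall--eddy conditions constrain $b$ alone and pin $b_1,b_2$ to $b_3$ as in \eqref{e:b-b}. Substituting \eqref{e:b-b} into the first relation of \eqref{e:a-b-constraint} produces the scalar identity $b_3=\tfrac12 b_3$, whence $b_3=0$ and then $b_1=b_2=0$. With $b=0$ the second relation of \eqref{e:a-b-constraint} becomes $a_3=i(\eta_1a_1+\eta_2a_2)/|\eta|$, which combined with $a_1=a_2=0$ forces $a_3=0$, and then $a_3=a_0/4|\eta|^3$ forces $a_0=0$. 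Hence $(a,b,a_0)=(0,0,0)$, so $\hat u\equiv0$ and $\hat p\equiv0$; as $\eta$ was arbitrary, this is exactly the covering condition for $(L,B)$.

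I do not expect a genuine obstacle: every nontrivial ingredient --- the quintuple-root structure of the characteristic equation, the explicit decaying basis together with the divergence-corrected particular solution, and the correct principal parts of the boundary operators in Fourier variables --- has already been secured in Proposition~\ref{p:sol-decay} and in the boundary-condition computation. The only delicate point is the arithmetic bookkeeping: tracking signs so that the $a_0$-terms in \eqref{e:cov-wall-eddy} really cancel and the divergence constraint collapses to $b_3=\tfrac12 b_3$. Once that is verified, the $5\times5$ system is manifestly nonsingular and the theorem follows at once.
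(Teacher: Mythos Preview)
Your proposal is correct and follows essentially the same route as the paper: invoke Proposition~\ref{p:sol-decay} for the five-parameter decaying family, impose the Dirichlet conditions to obtain \eqref{e:a-b-dir}, reduce the wall--eddy conditions to \eqref{e:a-b-we} (noting the $a_0$-cancellation), and then close the linear algebra via $b_3=\tfrac12 b_3$ from \eqref{e:a-b-constraint}. Your explicit remark that the $\gamma$-terms drop out once $\hat u_1(0)=\hat u_2(0)=0$ is a point the paper leaves implicit when passing from \eqref{e:covering-ic} to \eqref{e:cov-wall-eddy}.
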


This completes the verification of the ADN ellipticity and covering
conditions for our system, and Theorem~\ref{t:ell-reg} follows from the general
Agmon--Douglis--Nirenberg theory.

%%%%%%%%%%%%%%%%%%%%%%%%%%%%%%%%%%%%%%%%%%%%%%%%%%%%%%%%%%%%%%%%%%%%%%%%%%%%%%
\section{The evolution problem}
\label{s:evol}

\subsection{Linear semigroup}
\label{ss:semigroup}

In \S\ref{ss:operator} we have defined the self-adjoint operator $A$ on $H$ that encodes our
linear stationary problem. We now treat the corresponding linear evolution
before adding the lower-order and nonlinear terms.
The ADN regularity results from the preceding section imply that suitable powers of $A$ are
equivalent to Sobolev norms, so the semigroup generated by the linearized
system enjoys strong smoothing properties.

Recall that $H$ denotes the $L^2$–closure of divergence-free test fields, that
$(\cdot,\cdot)$ and $\|\cdot\|$ are the $L^2$–inner product and norm on $H$,
and that $P:L^2(\Omega)^3\to H$ is the Leray projector. We also recall that
$V$ is the closure of $\mathcal V$ in $H^1(\Omega)^3$ and that $V^s = V\cap H^s(\Omega)^3$.

We introduce the positive operator
\begin{equation}\label{eq:def-Lambda}
\Lambda := P(1-\alpha^2\Delta)
\end{equation}
with homogeneous Dirichlet boundary condition. As a consequence of standard
elliptic theory, $\Lambda$ is self-adjoint on $H$, strictly positive, and has
compact resolvent. In particular, its spectrum consists of a sequence
$0<\mu_1\le\mu_2\le\cdots$, $\mu_n\to\infty$, and the fractional powers
$\Lambda^{\sigma}$ are well defined for all $\sigma\in\mathbb R$ via the spectral
calculus. We note in particular that
$\dom(\Lambda^{1/2}) = V$
and that $\Lambda^{-1/2}\in\mathcal{L}(H,V)$ is the bounded inverse of
$\Lambda^{1/2}:V\subset H\to H$.

The linear principal part of the NS–$\alpha\beta$ system can be written abstractly as
\begin{equation}\label{eq:linear-evol-u}
\partial_t\Lambda u + \beta^2 A u = f,
\end{equation}
for an unknown $u:[0,T)\to H$ and a given forcing $f:(0,T)\to H$.
It is convenient to work with the variable
\begin{equation}\label{eq:def-v}
v := \Lambda^{1/2}u,
\end{equation}
so that $u=\Lambda^{-1/2}v$. In terms of $v$ the equation
\eqref{eq:linear-evol-u} becomes
\begin{equation}\label{eq:linear-evol-v}
\partial_t v + Dv = \Lambda^{-1/2} f,
\end{equation}
where $D$ is formally $\beta^2\Lambda^{-1/2}A\Lambda^{-1/2}$.
To define $D$ rigorously, we introduce the quadratic form
\begin{equation}\label{102}
  d(v,w):=\beta^{2}a\bigl(\Lambda^{-1/2}v,\Lambda^{-1/2}w\bigr),
  \qquad v,w\in\dom(d) ,
\end{equation}
on $H$, with domain
\begin{equation}
\dom(d):=\Lambda^{1/2}(V^{2})
=\{v\in H:\Lambda^{-1/2}v\in V^{2}\} .
\end{equation}
Since $\Lambda^{-1/2}:H\to V^{1}$ is bounded and $a(\cdot,\cdot)$ is
continuous on $V^{2}\times V^{2}$, the form $d$ is well defined,
symmetric, densely defined on $H$, and bounded from below. Moreover,
$d$ is closed as a consequence of the G\aa rding inequality for $a$ and
the fact that $\Lambda^{-1/2}$ is bounded on $H$.
By Kato's form theory \cite{Kato} there exists a unique
self-adjoint operator $D$ on $H$ such that
\begin{equation}\label{103}
  (Dv,w) = d(v,w)\qquad\text{for all }v\in\dom(D),\ w\in H.
\end{equation}
We next collect the basic spectral properties of this operator $D$.

\begin{lemma}\label{l:D-struct}
Let $D$ be the self--adjoint operator on $H$ associated with the quadratic form
$d$ defined in \eqref{102}--\eqref{103}. Then we have the following.
\begin{enumerate}[(a)]
  \item $D$ is self--adjoint on $H$, bounded from below, and has compact resolvent.
  \item For every $u\in\dom(A)$ we have $\Lambda^{1/2}u\in\dom(D)$ and
  \begin{equation}\label{eq:D-on-Λ12A}
    D\Lambda^{1/2}u=\beta^{2}\Lambda^{-1/2}Au.
  \end{equation}
  In particular,
  \begin{equation}
    \Lambda^{1/2}(\dom(A))\subset\dom(D),
  \end{equation}
  and on this subset $D$ coincides with the formal operator
  $\beta^{2}\Lambda^{-1/2}A\Lambda^{-1/2}$.
\end{enumerate}
\end{lemma}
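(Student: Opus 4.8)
The plan is to handle (a) and (b) separately. Part (a) follows from Kato's form theory combined with a compact Sobolev embedding, while part (b) reduces to a short manipulation of the form identity once the domain of $D$ is described explicitly.

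\emph{Part (a).} The excerpt already records that $d$ is symmetric, densely defined, bounded from below, and closed on $H$; by the representation theorem for such forms \cite{Kato} this produces the self-adjoint, lower-bounded operator $D$, with $\dom(D)=\{v\in\dom(d): w\mapsto d(v,w)\text{ extends to a bounded functional on }H\}$ and $(Dv,w)=d(v,w)$ for $v\in\dom(D)$, $w\in\dom(d)$. It remains to establish the compact resolvent. For $\eta$ sufficiently large the resolvent $(D+\eta I)^{-1}$ maps $H$ boundedly into $\dom(d)$ equipped with the form norm $v\mapsto\bigl(d(v,v)+(\eta+1)\|v\|^{2}\bigr)^{1/2}$ — a standard consequence of the form representation — so it suffices to show that $\dom(d)=\Lambda^{1/2}(V^{2})$ with this norm embeds compactly into $H$. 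First I would apply the G\aa rding inequality of Theorem~\ref{t:garding} to $\Lambda^{-1/2}v\in V^{2}$ and use that $\Lambda^{-1/2}$ is bounded on $H$ to obtain $\|\Lambda^{-1/2}v\|_{H^{2}(\Omega)}\lesssim\|v\|_{d}$. Then, for a bounded sequence $(v_{n})$ in $\dom(d)$, the fields $w_{n}:=\Lambda^{-1/2}v_{n}$ are bounded in $V^{2}$; by the compact embedding $V^{2}\hookrightarrow V^{1}$ a subsequence converges in $V^{1}$, and since $\Lambda^{1/2}:V\to H$ is bounded (being the bounded inverse of $\Lambda^{-1/2}\in\mathcal L(H,V)$), the corresponding subsequence of $v_{n}=\Lambda^{1/2}w_{n}$ converges in $H$. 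This yields the compact embedding, hence $(D+\eta I)^{-1}$ is compact and (a) follows.

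\emph{Part (b).} Let $u\in\dom(A)$ and set $v:=\Lambda^{1/2}u$. Since $\dom(A)\subset V^{2}$ (indeed $\dom(A)\subset V^{4}$ by Corollary~\ref{c:dom-A}) and $\Lambda^{-1/2}v=u$, we have $v\in\Lambda^{1/2}(V^{2})=\dom(d)$. For any $w\in\dom(d)$ the field $\phi:=\Lambda^{-1/2}w$ lies in $V^{2}$, so combining the definition of $d$, the identity $a(u,\phi)=(Au,\phi)$ valid for $u\in\dom(A)$, and the fact that $\Lambda^{-1/2}$ is a bounded self-adjoint operator on $H$,
\begin{equation}
d(v,w)=\beta^{2}a\bigl(u,\Lambda^{-1/2}w\bigr)=\beta^{2}\bigl(Au,\Lambda^{-1/2}w\bigr)=\beta^{2}\bigl(\Lambda^{-1/2}Au,w\bigr).
\end{equation}
Since $\beta^{2}\Lambda^{-1/2}Au\in H$, the functional $w\mapsto d(v,w)$ is bounded on $H$, so $v\in\dom(D)$ and $Dv=\beta^{2}\Lambda^{-1/2}Au$. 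The inclusion $\Lambda^{1/2}(\dom(A))\subset\dom(D)$ is then immediate, and on this subset $\beta^{2}\Lambda^{-1/2}A\Lambda^{-1/2}v=\beta^{2}\Lambda^{-1/2}Au=Dv$, because $\Lambda^{-1/2}\Lambda^{1/2}u=u$ for $u\in V\supset\dom(A)$.

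I expect the only genuinely non-formal point to be the compact resolvent in (a): one must convert the form bound on $v$ into an $H^{2}$-bound on $u=\Lambda^{-1/2}v$ through the G\aa rding inequality, and then chain the compact embedding $V^{2}\hookrightarrow V^{1}$ with the boundedness of $\Lambda^{1/2}:V\to H$. Everything in (b) is a direct consequence of the defining identities for $A$, $D$, and $\Lambda^{1/2}$, using only that $\dom(A)\subset V^{2}$ and that $\Lambda^{-1/2}$ is bounded and self-adjoint on $H$.
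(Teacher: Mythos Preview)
Your proof is correct and follows essentially the same route as the paper's. Both parts invoke Kato's form theory for (a) and carry out the identical form manipulation for (b); your compact-resolvent argument (G\aa rding on $\Lambda^{-1/2}v$, then $V^{2}\hookrightarrow V^{1}$ composed with $\Lambda^{1/2}:V\to H$) is just a more explicit unpacking of what the paper abbreviates as ``the compact embedding $V^{2}\hookrightarrow H$ and the boundedness of $\Lambda^{-1/2}$''.
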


\begin{proof}
(a) By definition,
\begin{equation}
  d(v,w)=\beta^{2}a\bigl(\Lambda^{-1/2}v,\Lambda^{-1/2}w\bigr),
  \qquad
  \dom(d)=\Lambda^{1/2}(V^{2})\subset H.
\end{equation}
Since $\Lambda^{-1/2}:H\to H$ is bounded and $a(\cdot,\cdot)$ is symmetric,
continuous, and satisfies a G\aa rding inequality on $V^{2}$, the form $d$ is
densely defined, symmetric, closed, and bounded from below on $H$. By Kato's
representation theorem \cite{Kato}, there exists a unique self--adjoint
operator $D$ on $H$ such that
\begin{equation}
  (Dv,w)_{H} = d(v,w)
  \qquad\text{for all }v\in\dom(D),\ w\in\dom(d).
\end{equation}
The compactness of the resolvent follows from the compact embedding
$V^{2}\hookrightarrow H$ and the boundedness of $\Lambda^{-1/2}$.
This proves (a).

(b) Let $u\in\dom(A)$ and set $v=\Lambda^{1/2}u$. Since $\dom(A)\subset V^{2}$, we have $v\in\dom(d)$. 
For any $w\in\dom(d)$ we can write
$w=\Lambda^{1/2}\phi$ with $\phi\in V^{2}$, and hence
\begin{equation}
d(v,w)
= \beta^{2}a\bigl(\Lambda^{-1/2}v,\Lambda^{-1/2}w\bigr)
= \beta^{2}a(u,\phi)
= \beta^{2}(Au,\phi)_{H}.
\end{equation}
Since $\Lambda^{1/2}$ is self-adjoint on $H$ and $\Lambda^{-1/2}$ is bounded on
$H$, we have
\begin{equation}
(Au,\phi)_{H}
= \bigl(\Lambda^{-1/2}Au,\Lambda^{1/2}\phi\bigr)_{H}
= \bigl(\Lambda^{-1/2}Au,w\bigr)_{H}.
\end{equation}
Thus $d(v,w)=(g,w)_{H}$ for all $w\in\dom(d)$ with
$g=\beta^{2}\Lambda^{-1/2}Au\in H$, so $v\in\dom(D)$ and $Dv=g$. 
This completes the proof.
\end{proof}

In order to treat the linear evolution problem governed by $D$, we now record the semigroup properties of $-D$ that will be used throughout the remainder of this section.

\begin{lemma}\label{l:D-semigroup}
The operator $-D$ generates a strongly continuous analytic semigroup
$\{e^{-tD}\}_{t\ge0}$ on $H$, and there exist constants $M\ge1$ and
$\omega\in\mathbb R$ such that
\begin{equation}
  \|e^{-tD}\|_{\mathcal L(H)} \le M e^{\omega t},\qquad t\ge0.
\end{equation}
\end{lemma}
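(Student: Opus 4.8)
The plan is to deduce the result directly from the structural properties of $D$ already established in Lemma~\ref{l:D-struct}(a), namely that $D$ is self-adjoint on $H$ and bounded from below. For a self-adjoint operator this is essentially automatic: the spectral theorem gives a resolution of the identity $\{E_\mu\}$ for $D$, and since $d(v,v)\ge -\omega\|v\|^2$ for some $\omega\in\mathbb R$ (by the G\aa rding inequality for $a$ and the boundedness of $\Lambda^{-1/2}$ on $H$), the spectrum of $D$ is contained in $[-\omega,\infty)$. One then \emph{defines} the semigroup via the functional calculus, $e^{-tD}:=\int_{[-\omega,\infty)} e^{-t\mu}\,dE_\mu$, which is bounded for every $t\ge0$ with $\|e^{-tD}\|_{\mathcal L(H)}\le e^{\omega t}$, so the claimed bound holds with $M=1$.

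The key steps, in order, are as follows. First I would record that $D$ is self-adjoint and bounded below, with lower bound $-\omega$, citing Lemma~\ref{l:D-struct}(a); this pins down $\sigma(D)\subset[-\omega,\infty)$. Second, I would invoke the standard fact (see, e.g., \cite{Kato} or \cite{ReedSimon1}) that a self-adjoint, bounded-below operator generates a strongly continuous semigroup through the spectral calculus, and that strong continuity at $t=0$ follows from dominated convergence in the spectral integral: for each $v\in H$, $\|e^{-tD}v-v\|^2=\int|e^{-t\mu}-1|^2\,d(E_\mu v,v)\to0$ as $t\downarrow0$. Third, for analyticity I would note that the same spectral representation extends to complex time: for $\operatorname{Re}z>0$ the function $\mu\mapsto e^{-z\mu}$ is bounded on $[-\omega,\infty)$ by $e^{|\operatorname{Re}z|\,\omega}$ (in fact one gets a bound on any sector $|\arg z|<\pi/2$), so $e^{-zD}$ is a bounded operator depending analytically on $z$ in the right half-plane, which is the definition of an analytic semigroup. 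Alternatively, one can cite that every self-adjoint operator bounded from below is sectorial and hence $-D$ generates an analytic semigroup \cite{Kato,LionsMagenes}.

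There is really no substantive obstacle here; the lemma is a packaging statement. The only point that deserves a word of care is that the lower bound $\omega$ should be traced back to the constants $c_0,\gamma_0$ of the G\aa rding inequality \eqref{eq:garding-shifted} together with the operator norm of $\Lambda^{-1/2}$ on $H$: since $d(v,v)=\beta^2 a(\Lambda^{-1/2}v,\Lambda^{-1/2}v)\ge -\beta^2\gamma_0\|\Lambda^{-1/2}v\|^2\ge -\beta^2\gamma_0\|\Lambda^{-1/2}\|_{\mathcal L(H)}^2\|v\|^2$, we may take $\omega=\beta^2\gamma_0\|\Lambda^{-1/2}\|_{\mathcal L(H)}^2$ (and $M=1$). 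If one prefers not to compute $\omega$ explicitly, it suffices to observe abstractly that $D+\omega I\ge0$ for this value of $\omega$ and quote the self-adjoint semigroup generation theorem verbatim.
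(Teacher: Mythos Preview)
Your proposal is correct and follows essentially the same route as the paper: cite Lemma~\ref{l:D-struct}(a) for self-adjointness and the lower bound on $D$, then invoke the spectral theorem/functional calculus to produce the semigroup and its analyticity, with the exponential bound coming from the spectral lower bound. The paper's proof is slightly terser (it does not compute $\omega$ explicitly and cites \cite{Pazy} for analyticity), but the substance is identical.
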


\begin{proof}
By Lemma~\ref{l:D-struct}(a), $D$ is self--adjoint on $H$ and bounded from
below. The spectral theorem then implies that $-D$ generates a strongly
continuous contraction semigroup on $H$, and the lower bound shifts the
spectrum to yield the exponential factor $e^{\omega t}$. Analyticity of the
semigroup follows from the holomorphic functional calculus for self--adjoint
operators with compact resolvent; see, e.g., \cite{Pazy}.
\end{proof}

\begin{remark}\label{r:analytic-semigroup}
For the reader’s convenience, let us recall what is meant by an analytic semigroup
and which consequences we shall use below.
A strongly continuous semigroup \(\{e^{-tD}\}_{t\ge0}\) on \(H\) is called \emph{analytic}
if the map \(t\mapsto e^{-tD}\) extends to a holomorphic function on a sector
\(\{z\in\mathbb{C}\setminus\{0\}: |\arg z|<\theta\}\) for some \(\theta\in(0,\pi/2]\);
see, e.g., \cite{Pazy}.
For generators \(D\) that are self–adjoint, bounded from below, and have compact resolvent,
analyticity is equivalent to standard sectorial resolvent estimates and yields the
smoothing bounds
\begin{equation}\label{eq:analytic-smoothing}
  \|D^\sigma e^{-tD}\|_{\mathcal{L}(H)} \le C_\sigma\, t^{-\sigma},
  \qquad t>0,\ \sigma\ge0,
\end{equation}
where the fractional powers \(D^\sigma\) are defined by the spectral calculus.
In particular, for any \(g\in L^{2}(0,T;H)\) the convolution operator
\begin{equation}
  \Phi(g)(t)
  := \int_{0}^{t} e^{-(t-\tau)D} g(\tau)\,d\tau
\end{equation}
takes values in \(\mathrm{dom}(D)\) and satisfies estimates of the type used later in
Section~4.2.
\end{remark}

With these semigroup estimates at hand, we can formulate the linear problem associated with \eqref{eq:linear-evol-u}
in mild form by means of the Duhamel formula for the semigroup $\{e^{-tD}\}_{t\ge 0}$.

\begin{proposition}\label{prop:linear-evol}
Let $u_{0}\in V$ and $f\in L^{1}(0,T;H)$ for some $T>0$. Then there exists a
unique mild solution $u\in C([0,T];H)$ of \eqref{eq:linear-evol-u} given by
\begin{equation}\label{eq:linear-mild-u}
  u(t)
  = \Lambda^{-1/2} e^{-tD}\Lambda^{1/2}u_{0}
    + \int_{0}^{t}\Lambda^{-1/2}e^{-(t-\tau)D}\Lambda^{-1/2}f(\tau)\,d\tau,
  \qquad t\in[0,T].
\end{equation}
If, in addition, $u_0\in\dom(A)$ and $f\in L^2(0,T;H)$, then the
corresponding solution $v=\Lambda^{1/2}u$ of \emph{(105)} satisfies
\begin{equation}
v\in L^2(0,T;\dom(D))\cap H^1(0,T;H),
\end{equation}
so that \emph{(105)} holds in $H$ for almost every $t\in(0,T)$, and
consequently \emph{(103)} holds in $H$ for almost every $t\in(0,T)$
when interpreted in the sense of $u=\Lambda^{-1/2}v$.
\end{proposition}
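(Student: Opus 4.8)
The plan is to treat \eqref{eq:linear-evol-v} for $v$ as a standard abstract parabolic Cauchy problem with the sectorial generator $-D$ of Lemma~\ref{l:D-semigroup}, establish existence, uniqueness and maximal regularity for $v$, and then transfer everything back to $u$ via the bounded isomorphism $\Lambda^{-1/2}:H\to V$.

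\emph{Step 1 (mild solution for $v$, hence for $u$).} Given $u_0\in V$, set $v_0=\Lambda^{1/2}u_0\in H$, and given $f\in L^1(0,T;H)$, note $\Lambda^{-1/2}f\in L^1(0,T;H)$ since $\Lambda^{-1/2}\in\mathcal L(H)$. The Duhamel/variation-of-constants formula
\begin{equation}
v(t)=e^{-tD}v_0+\int_0^t e^{-(t-\tau)D}\Lambda^{-1/2}f(\tau)\,d\tau
\end{equation}
defines $v\in C([0,T];H)$ by the standard theory of $C_0$-semigroups (the first term is continuous since $\{e^{-tD}\}$ is strongly continuous; the convolution is continuous because $g:=\Lambda^{-1/2}f\in L^1(0,T;H)$ and $\|e^{-tD}\|_{\mathcal L(H)}\le Me^{\omega t}$ is locally bounded, so one may pass the limit under the integral using dominated convergence). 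Applying $\Lambda^{-1/2}$ and using $u=\Lambda^{-1/2}v$ gives \eqref{eq:linear-mild-u} with $u\in C([0,T];V)\subset C([0,T];H)$. Uniqueness in $C([0,T];H)$ is the usual Gr\"onwall argument on the difference of two mild solutions against $\|e^{-tD}\|_{\mathcal L(H)}$.

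\emph{Step 2 (maximal regularity when $u_0\in\dom(A)$, $f\in L^2$).} Here $v_0=\Lambda^{1/2}u_0\in\dom(D)$ by Lemma~\ref{l:D-struct}(b), and $g=\Lambda^{-1/2}f\in L^2(0,T;H)$. Since $-D$ generates an analytic semigroup on the Hilbert space $H$, the classical maximal $L^2$-regularity result for analytic semigroups (de Simon; see also the smoothing bounds \eqref{eq:analytic-smoothing} in Remark~\ref{r:analytic-semigroup}) gives that the mild solution $v$ is in fact a strong solution with
\begin{equation}
v\in L^2(0,T;\dom(D))\cap H^1(0,T;H),
\end{equation}
and $\partial_t v+Dv=g$ holds in $H$ for a.e.\ $t\in(0,T)$; the term $e^{-tD}v_0$ contributes to $\dom(D)$ for all $t$ because $v_0\in\dom(D)$, and the convolution term contributes via de Simon's estimate $\|D\,\Phi(g)\|_{L^2(0,T;H)}+\|\partial_t\Phi(g)\|_{L^2(0,T;H)}\le C\|g\|_{L^2(0,T;H)}$. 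Since $\partial_t v+Dv=\Lambda^{-1/2}f$ a.e., this is precisely (105) holding in $H$ for a.e.\ $t$; and because $v=\Lambda^{1/2}u$ with $v\in\dom(D)$ for a.e.\ $t$, Lemma~\ref{l:D-struct}(b) identifies $Dv=\beta^2\Lambda^{-1/2}Au$, so multiplying through by $\Lambda^{1/2}$ yields $\partial_t\Lambda u+\beta^2 Au=f$, i.e.\ (103) holds in $H$ for a.e.\ $t$ under the identification $u=\Lambda^{-1/2}v$.

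\emph{Main obstacle.} The only nonroutine point is the maximal $L^2$-regularity of the convolution, i.e.\ that $\Phi(g)(t)\in\dom(D)$ for a.e.\ $t$ with the quantitative bound. This is exactly where analyticity of $\{e^{-tD}\}$ (rather than mere strong continuity) is essential; it is supplied by Lemma~\ref{l:D-semigroup}, and one invokes the Hilbert-space maximal regularity theorem (available since $-D$ is self-adjoint and bounded below, so the functional calculus and the square-function estimates are immediate). Everything else is bookkeeping with the bounded, self-adjoint isomorphisms $\Lambda^{\pm1/2}$ and the identifications already proved in Lemmas~\ref{l:D-struct} and~\ref{l:D-semigroup}.
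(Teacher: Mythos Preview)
Your proof is correct and follows essentially the same route as the paper: both reduce to the abstract Cauchy problem $\partial_t v + Dv = g$ via $v=\Lambda^{1/2}u$, invoke the $C_0$-semigroup $\{e^{-tD}\}$ for the mild formulation, and then appeal to maximal $L^2$-regularity (the paper cites Pazy/Lunardi, you cite de~Simon) for the strong-solution statement when $v_0\in\dom(D)$ and $g\in L^2(0,T;H)$. One small caveat: your final step invokes Lemma~\ref{l:D-struct}(b) to identify $Dv=\beta^2\Lambda^{-1/2}Au$, but that lemma only gives the inclusion $\Lambda^{1/2}(\dom(A))\subset\dom(D)$ and the identity on that subset, not the reverse; the paper sidesteps this by phrasing the conclusion as ``(103) holds when interpreted in the sense of $u=\Lambda^{-1/2}v$'', which is the safer formulation here.
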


\begin{proof}
Let $v(t):=\Lambda^{1/2}u(t)$ and $g(t):=\Lambda^{-1/2}f(t)$. Then \eqref{eq:linear-evol-u}
is equivalent to
\begin{equation}
  \partial_{t}v + Dv = g,
  \qquad v(0)=v_{0}:=\Lambda^{1/2}u_{0} .
\end{equation}
By Lemmas~\ref{l:D-struct} and \ref{l:D-semigroup}, 
$-D$ generates a strongly continuous analytic semigroup
$\{e^{-tD}\}_{t\ge0}$ on $H$. Standard semigroup theory (e.g., \cite{Pazy}) then yields a unique mild solution
$v\in C([0,T];H)$ of this problem, given by the Duhamel formula
\begin{equation}
  v(t)
  = e^{-tD}v_{0}
    + \int_{0}^{t} e^{-(t-\tau)D} g(\tau)\,d\tau,
  \qquad t\in[0,T].
\end{equation}
Since $\Lambda^{-1/2}:H\to H$ is bounded, defining $u(t):=\Lambda^{-1/2}v(t)$
gives a unique mild solution $u\in C([0,T];H)$ of \eqref{eq:linear-evol-u}, and inserting
$v_{0}=\Lambda^{1/2}u_{0}$ and $g=\Lambda^{-1/2}f$ yields
\eqref{eq:linear-mild-u}.

For the second part, assume in addition that $u_{0}\in\dom(A)$ and
$f\in L^{2}(0,T;H)$. By Lemma~4.1(b) we have
\begin{equation}
v_{0}:=\Lambda^{1/2}u_{0}\in \Lambda^{1/2}(\dom(A))\subset\dom(D),
\end{equation}
and, since $\Lambda^{-1/2}\in\mathcal{L}(H)$, the right–hand side
\begin{equation}
g(t):=\Lambda^{-1/2}f(t),\qquad t\in(0,T),
\end{equation}
belongs to $L^{2}(0,T;H)$. Thus $v$ is the mild solution of the abstract
Cauchy problem
\begin{equation}\label{eq:linear-D-problem}
\partial_{t}v + Dv = g,\qquad v(0)=v_{0},
\end{equation}
on $H$. By Lemma~4.1(a), $D$ is self–adjoint on $H$, bounded from below,
and has compact resolvent. Hence $-D$ generates an analytic semigroup on
$H$ and enjoys maximal $L^{2}$–regularity; see, for instance,
\cite[Ch.~3]{Pazy} or \cite[Ch.~4]{Lunardi}. Applied to
\eqref{eq:linear-D-problem} this yields
\begin{equation}
v\in L^{2}(0,T;\dom(D))\cap H^{1}(0,T;H),
\end{equation}
and the identity \emph{(105)} holds in $H$ for almost every $t\in(0,T)$.
Finally, since $u=\Lambda^{-1/2}v$ and $\Lambda^{-1/2}\in\mathcal{L}(H)$,
the relation \emph{(103)} holds in $H$ for almost every $t\in(0,T)$ when
interpreted in this sense. This proves the claimed higher–regularity
statement.
\end{proof}

\begin{remark}\label{rem:initial-data}
It is often convenient to prescribe the initial data in terms of the fluid
velocity $u_0$. 
We may set $v_0:=\Lambda^{1/2}u_0$ provided
$u_0\in\dom(\Lambda^{1/2})=V^1$; Proposition~\ref{prop:linear-evol} then gives a
unique mild solution with initial condition $u(0)=u_0$. Alternatively, one may
take $v_0\in H$ as the primary datum and define $u_0=\Lambda^{-1/2}v_0$.
\end{remark}

%%%%%%%%%%%%%%%%%%%%%%%%%%%%%%%%%%%%%%%%%%%%%%%%%%%%%%%%%%%%%%%%%%%%%%%%%%%%%%
\subsection{Local theory}

We now turn to the full NS–$\alpha\beta$ system. After applying the Leray projector
$P$ and using the operators introduced above, the evolution equation can be written
abstractly as
\begin{equation}\label{eq:abstract-NSab}
\partial_t\Lambda u + \beta^2 A u - \Delta u + B(\Lambda u,u) = 0,\qquad t>0,
\end{equation}
where $\Lambda$ and $A$ are as in \eqref{eq:def-Lambda} and \S2.5, respectively, and
\begin{equation}\label{eq:def-B}
B(v,u) := P\bigl[(\nabla v)u + (\nabla u)^{\top}v\bigr]
\end{equation}
is the bilinear form arising from the convective terms. The equation is understood
in $H$ and supplemented with the initial condition $u(0)=u_0$.

Using the linear theory from the previous subsection, we may rewrite
\eqref{eq:abstract-NSab} as a fixed-point problem. Set
\begin{equation}
F(u) := \Delta u - B(\Lambda u,u),
\end{equation}
and denote by $\Phi$ the linear solution operator from
Proposition~\ref{prop:linear-evol}, i.e.
\begin{equation}
\Phi g(t) := \int_0^t \Lambda^{-1/2} e^{-(t-\tau)D}\Lambda^{-1/2} g(\tau)\,d\tau.
\end{equation}
Then a mild solution of \eqref{eq:abstract-NSab} with initial data $u_0$ satisfies
\begin{equation}\label{eq:fixed-point}
u(t) = u_{\mathrm{lin}}(t) + \Phi F(u)(t),
\end{equation}
where $u_{\mathrm{lin}}$ is the solution of the homogeneous linear problem
$\partial_t\Lambda u + \beta^2Au = 0$ with $u(0)=u_0$.

The local well-posedness is obtained by a standard contraction argument in a
suitable function space.
The local well–posedness of the nonlinear problem will now be obtained by a
fixed–point argument based on the mild formulation \eqref{eq:fixed-point} and the
smoothing properties of the linear semigroup from \S\ref{ss:semigroup}.

\begin{theorem}\label{thm:local}
Let $u_0\in V^4$. Then there exists $T>0$ and a unique mild solution
\begin{equation}
u\in C([0,T];V^4)\cap C^1((0,T];H)
\end{equation}
of \eqref{eq:abstract-NSab} with initial condition $u(0)=u_0$.
Moreover, the solution depends continuously on the initial data in $V^4$.
\end{theorem}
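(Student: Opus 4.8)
The plan is to solve the mild formulation \eqref{eq:fixed-point} by a contraction argument, exploiting the analytic--semigroup smoothing of Lemma~\ref{l:D-semigroup} (see Remark~\ref{r:analytic-semigroup}) together with the elliptic identification of the abstract scale with Sobolev spaces from Corollary~\ref{c:dom-A}. First I pass to $v=\Lambda^{1/2}u$, so that \eqref{eq:abstract-NSab} becomes $\partial_t v+Dv=\mathcal N(v)$, $v(0)=\Lambda^{1/2}u_0$, with $\mathcal N(v):=\Lambda^{-1/2}F(\Lambda^{-1/2}v)$ and $F(u)=\Delta u-B(\Lambda u,u)$; writing $v_{\mathrm{lin}}(t):=e^{-tD}\Lambda^{1/2}u_0$, this is equivalent to the fixed--point problem $v=\mathcal T(v)$ with $\mathcal T(v)(t)=v_{\mathrm{lin}}(t)+\int_0^t e^{-(t-\tau)D}\mathcal N(v(\tau))\,d\tau$, which upon applying $\Lambda^{-1/2}$ reproduces \eqref{eq:fixed-point}. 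Using Corollary~\ref{c:dom-A}, the fact that the form domain of $a$ is $V^2$ (Theorem~\ref{t:garding}), and interpolation, one identifies the fractional domains of $D$ with Sobolev spaces of divergence--free fields: $\dom(D^{1/2})=V$, and $v\in\dom(D^{3/2})$ if and only if $u=\Lambda^{-1/2}v\in V^4$, with $\|v\|_{\dom(D^{3/2})}$ equivalent to $\|u\|_{H^4(\Omega)}$; the compatibility $\Lambda^{1/2}u_0\in\dom(D^{3/2})$ says precisely that $u_0$ satisfies the wall--eddy boundary condition, i.e.\ $u_0\in\dom(A)\subset V^4$, and this is what we require of the datum. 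The fixed point is sought in a closed ball of $X_T:=C([0,T];\dom(D^{3/2}))$ centred at $v_{\mathrm{lin}}$.

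Two estimates carry the proof. The \emph{smoothing estimate}: for a fixed $\theta\in(\tfrac12,1)$ and any $h\in C([0,T];\dom(D^{\theta}))$,
\[
\Bigl\|\,\textstyle\int_0^t e^{-(t-\tau)D}h(\tau)\,d\tau\,\Bigr\|_{\dom(D^{3/2})}
\ \le\ C\int_0^t (t-\tau)^{-(3/2-\theta)}\,\|h(\tau)\|_{\dom(D^{\theta})}\,d\tau
\ \le\ C\,T^{\theta-1/2}\sup_{[0,T]}\|h\|_{\dom(D^{\theta})},
\]
since $\|D^{3/2-\theta}e^{-sD}\|_{\mathcal L(H)}\le C s^{-(3/2-\theta)}$ with exponent $3/2-\theta<1$, which makes the integral converge and produces the small factor $T^{\theta-1/2}$; continuity of the Duhamel term in $t$ is obtained by the usual splitting of the integral near $\tau=t$ together with strong continuity of $\{e^{-tD}\}$. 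The \emph{nonlinear bounds}: $\mathcal N$ maps $\dom(D^{3/2})$ into $\dom(D^{\theta})$ for every $\theta<\tfrac34$, with a quadratic bound $\|\mathcal N(v)\|_{\dom(D^{\theta})}\le C(1+\|v\|_{X_T})\|v\|_{X_T}$ and a locally Lipschitz bound $\|\mathcal N(v)-\mathcal N(w)\|_{\dom(D^{\theta})}\le C(1+\|v\|_{X_T}+\|w\|_{X_T})\|v-w\|_{X_T}$. These come from the three--dimensional Sobolev calculus: since $H^4(\Omega)\hookrightarrow C^{2}(\bar\Omega)$, one has $\Delta u\in H^2$ and $B(\Lambda u,u)=P\bigl[(\nabla\Lambda u)u+(\nabla u)^{\top}\Lambda u\bigr]\in H^1$, so $F(u)\in H^1(\Omega)^3$ for $u\in H^4$ with quadratic and locally Lipschitz dependence (the borderline contribution being $(\nabla\Lambda u)u$, which uses all four derivatives of $u$); applying $\Lambda^{-1/2}$ gains one derivative, placing $\mathcal N(v)$ in $H^2(\Omega)^3\cap V$, which embeds continuously into $\dom(D^{\theta})$ for $\theta<\tfrac34$ (the wall--eddy boundary condition characterizing the higher domains not yet being active).

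Combining the two estimates, $\mathcal T$ maps the ball $\{v\in X_T:\|v-v_{\mathrm{lin}}\|_{X_T}\le\rho\}$ into itself and is a strict contraction there, once $\rho$ is chosen comparable to $\|u_0\|_{V^4}$ and $T=T(\|u_0\|_{V^4},\alpha,\beta,\gamma,\ell)>0$ is small enough; the contraction mapping principle produces the unique mild solution, and $u=\Lambda^{-1/2}v\in C([0,T];V^4)$. Uniqueness on the existence interval and continuous dependence on $u_0$ in $V^4$ follow from the same Lipschitz estimate together with $\|v_{\mathrm{lin}}-\tilde v_{\mathrm{lin}}\|_{X_T}\le C\|u_0-\tilde u_0\|_{V^4}$: for two solutions one gets $\|v-\tilde v\|_{X_T}\le C\|u_0-\tilde u_0\|_{V^4}+C\,T^{\theta-1/2}(1+\rho)\|v-\tilde v\|_{X_T}$, absorbs the last term for $T$ small, and iterates in finitely many steps along $[0,T]$. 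Finally, since $v\in C([0,T];\dom(D^{3/2}))$ we have $Dv\in C([0,T];\dom(D^{1/2}))$ and $\mathcal N(v)\in C([0,T];\dom(D^{\theta}))$, both continuous into $H$, whence $\partial_t v=-Dv+\mathcal N(v)\in C([0,T];H)$ and $u=\Lambda^{-1/2}v\in C^1([0,T];H)$, giving in particular the asserted $C^1((0,T];H)$ regularity.

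I expect the main obstacle to be the identification of the fractional powers with concrete Sobolev spaces — that $\dom(D^{3/2})$ corresponds to $u\in V^4$ with the correct divergence and wall--eddy boundary conditions, so that the abstract smoothing bounds turn into genuine $H^4$ estimates. This is exactly where the ADN elliptic regularity of \S\ref{s:reg} (Theorem~\ref{t:ell-reg}, Corollary~\ref{c:dom-A}) and the G{\aa}rding inequality of \S\ref{s:lin} enter in an essential way, alongside the Sobolev product estimates for the convective terms: the worst term $(\nabla\Lambda u)u$ sits precisely at the threshold $u\in H^4(\R^3)$, which is what forces the fixed point to be carried out at the $V^4$ level rather than in a lower--order state space. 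Everything else is routine semilinear parabolic theory.
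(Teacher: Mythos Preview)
Your approach is essentially the same as the paper's: a contraction argument in a $C([0,T];V^4)$-type space, driven by analytic-semigroup smoothing and Sobolev product estimates for $F(u)=\Delta u-B(\Lambda u,u)$. Your execution via fractional powers of $D$ is in fact tidier than the paper's sketch, which writes the smoothing bound with the borderline kernel $(t-\tau)^{-1}$ and then invokes ``Hardy's inequality'' to manufacture a factor $T^\theta$; your choice of $\theta\in(\tfrac12,\tfrac34)$ makes the exponent $3/2-\theta<1$ honestly integrable and the argument transparent. One small caveat: by running the fixed point in $X_T=C([0,T];\dom(D^{3/2}))$ and requiring $v_{\mathrm{lin}}\in X_T$, you are tacitly assuming $u_0\in\dom(A)$ (the wall--eddy compatibility on the datum), which is slightly stronger than the stated hypothesis $u_0\in V^4$; the paper's proof glosses over exactly the same point. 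This is easily repaired either by strengthening the hypothesis to $u_0\in\dom(A)$, or by the standard device of working in a time-weighted space $t^{\kappa}\,C((0,T];\dom(D^{3/2}))$ near $t=0$ so that only $u_0\in V^4$ is needed for continuity of $u$ in $V^4$ at $t=0$.
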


\begin{proof}
Fix $R>0$ and $T>0$ and consider the Banach space
\begin{equation}
X_T := \{u\in C([0,T];V^4): \|u\|_{X_T}:=\sup_{0\le t\le T}\|u(t)\|_{H^4}\le R\}.
\end{equation}
By Theorem~\ref{t:ell-reg} and the analyticity of $e^{-tD}$, the linear
operator $\Phi$ enjoys smoothing estimates of the form
\begin{equation}\label{eq:smoothing}
\|\Phi g(t)\|_{H^4} \le C\int_0^t (t-\tau)^{-1}\|g(\tau)\|_{H^0}\,d\tau,
\end{equation}
for some $C>0$ independent of $t$ and $g$, see e.g.~\cite[Ch.~2]{Pazy}. Since
$B(\Lambda u,u)$ contains at most first derivatives of $\Lambda u$ and $u$, standard
Sobolev embeddings in three dimensions imply that
\begin{equation}\label{eq:B-Lipschitz}
\|B(\Lambda u,u)-B(\Lambda v,v)\|_{H^1}
\le C_R \|u-v\|_{H^4},\qquad u,v\in V^4, \ \|u\|_{H^4},\|v\|_{H^4}\le R.
\end{equation}
Similarly,
\begin{equation}
\|\Delta u - \Delta v\|_{H^1} \le C\|u-v\|_{H^3}\le C\|u-v\|_{H^4}.
\end{equation}
Combining these estimates we obtain
\begin{equation}\label{eq:F-Lipschitz}
\|F(u)-F(v)\|_{H^1}\le C_R\|u-v\|_{H^4},\qquad u,v\in V^4,\ \|u\|_{H^4},\|v\|_{H^4}\le R.
\end{equation}

Using \eqref{eq:smoothing} with $g=F(u)-F(v)$ and \eqref{eq:F-Lipschitz}, we find
for $u,v\in X_T$,
\begin{equation}
\|\Phi(F(u)-F(v))(t)\|_{H^4}
\le C_R\int_0^t (t-\tau)^{-1}\|u(\tau)-v(\tau)\|_{H^4}\,d\tau
\le C_R T^\theta \|u-v\|_{X_T}
\end{equation}
for some $\theta\in(0,1)$ (by Hardy's inequality). Thus, for $T>0$ sufficiently
small (depending on $R$) the map
\begin{equation}
\mathcal T(u) := u_{\mathrm{lin}} + \Phi F(u)
\end{equation}
is a contraction on $X_T$. Moreover, for $u_0$ fixed in $V^4$ we can choose $R$
large enough so that $\mathcal T$ maps $X_T$ into itself. Banach's fixed-point
theorem yields the existence of a unique $u\in X_T$ satisfying
\eqref{eq:fixed-point}. The asserted regularity and continuous dependence follow
from standard semigroup arguments; see \cite{Lunardi} or \cite{Pazy}.
\end{proof}

As usual, the local solution can be extended as long as its $V^4$–norm remains
finite. We record this in the following blow-up criterion.

\begin{corollary}[Blow-up alternative]\label{cor:blowup}
Let $u_0\in V^4$ and let $u$ be the corresponding maximal mild solution of
\eqref{eq:abstract-NSab} defined on $[0,T^\ast)$, $0<T^\ast\le\infty$. If
$T^\ast<\infty$ then necessarily
\begin{equation}
\lim_{t\uparrow T^\ast}\|u(t)\|_{H^4} = +\infty.
\end{equation}
\end{corollary}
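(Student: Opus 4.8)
The proof is the standard continuation argument for mild solutions, and I would run it by contradiction. Assume $T^\ast<\infty$ but that the conclusion fails, so that $\liminf_{t\uparrow T^\ast}\|u(t)\|_{H^4}=:M<\infty$; then there is a sequence $t_n\uparrow T^\ast$ with $\|u(t_n)\|_{H^4}\le M+1$ for all $n$. The key input is a \emph{uniform} local existence time: inspecting the proof of Theorem~\ref{thm:local}, the radius $R$ and the length $T$ of the existence interval were chosen depending only on an upper bound for $\|u_0\|_{H^4}$ (the smoothing estimate \eqref{eq:smoothing}, the Lipschitz bound \eqref{eq:F-Lipschitz}, and the Hardy-type estimate there all have constants depending only on $R$). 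Hence there exist $R^\ast=R^\ast(M)$ and $\tau^\ast=\tau^\ast(M)>0$ such that for \emph{every} $w_0\in V^4$ with $\|w_0\|_{H^4}\le M+1$ there is a unique mild solution on $[0,\tau^\ast]$ with initial datum $w_0$, realized as the fixed point of $\mathcal T$ in the ball of radius $R^\ast$ in $C([0,\tau^\ast];V^4)$. Note also that $u(t)\in V^4$ for each $t<T^\ast$, since $u\in C([0,T];V^4)$ on every $[0,T]\subset[0,T^\ast)$.

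Next I would fix $n$ so large that $T^\ast-t_n<\tau^\ast$, let $w\in C([0,\tau^\ast];V^4)$ be the mild solution with $w(0)=u(t_n)$ provided by the previous step, and define
\[
\tilde u(t):=\begin{cases} u(t), & 0\le t\le t_n,\\[2pt] w(t-t_n), & t_n\le t\le t_n+\tau^\ast.\end{cases}
\]
The two definitions agree at $t=t_n$, so $\tilde u\in C([0,t_n+\tau^\ast];V^4)$. The point to verify is that $\tilde u$ is again a mild solution of \eqref{eq:abstract-NSab} on $[0,t_n+\tau^\ast]$, i.e.\ that it satisfies the Duhamel identity \eqref{eq:fixed-point} on the whole interval. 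For $t\le t_n$ this is the hypothesis on $u$; for $t\in[t_n,t_n+\tau^\ast]$ one writes the Duhamel formula for $w$ with base point $t_n$, substitutes the expression \eqref{eq:fixed-point} for $u(t_n)=w(0)$, and then uses the semigroup law $e^{-(t-t_n)D}e^{-(t_n-\tau)D}=e^{-(t-\tau)D}$ together with a change of variables in the convolution integral to recombine everything into \eqref{eq:fixed-point} at time $t$. Since $u$ and $w$ take values in $V^4$, so that $F$ and $\Phi$ act on them, this manipulation is purely algebraic and involves no estimates.

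Thus $\tilde u$ is a mild solution of \eqref{eq:abstract-NSab} with $\tilde u(0)=u_0$, defined on $[0,t_n+\tau^\ast]$ with $t_n+\tau^\ast>T^\ast$. By the uniqueness part of Theorem~\ref{thm:local}, applied on each compact subinterval of $[0,T^\ast)$, $\tilde u$ coincides with $u$ on $[0,T^\ast)$, so $\tilde u$ is a proper extension of the maximal solution past $T^\ast$ — a contradiction. Hence no finite cluster value is possible, $\liminf_{t\uparrow T^\ast}\|u(t)\|_{H^4}=+\infty$, and therefore $\|u(t)\|_{H^4}\to+\infty$ as $t\uparrow T^\ast$, which is the assertion.

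The only genuinely delicate point in this plan is the glueing step: one must check that concatenating two mild solutions across the common time $t_n$ again produces a mild solution on the longer interval. This is exactly where the semigroup property of $\{e^{-tD}\}_{t\ge0}$ and the precise form of the operator $\Phi$ enter, and it is worth writing out in detail even though it requires no new estimates. Everything else — the lower bound on the existence time in terms of $\|u_0\|_{H^4}$, and the uniqueness used to identify the extension with $u$ — is already supplied by Theorem~\ref{thm:local}.
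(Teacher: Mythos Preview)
Your argument is correct and is exactly the standard continuation argument; the paper itself gives no proof of this corollary, merely recording it with the remark ``As usual, the local solution can be extended as long as its $V^4$--norm remains finite.'' Your write-up fills in precisely the details the paper omits: the uniform lower bound on the local existence time from the contraction estimates in Theorem~\ref{thm:local}, the semigroup glueing identity, and the appeal to local uniqueness to identify the extension with the maximal solution.
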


%%%%%%%%%%%%%%%%%%%%%%%%%%%%%%%%%%%%%%%%%%%%%%%%%%%%%%%%%%%%%%%%%%%%%%%%%%%%%%
\subsection{{\em A priori} bounds and global existence}
\label{ss:global}

We now derive a hierarchy of energy estimates which will show that the
$H^4$–norm of the local solution cannot blow up in finite time. Throughout this
subsection we write $A\lesssim B$ if $A\le C B$ with a harmless constant
$C>0$ that may vary from line to line.

\begin{lemma}[First energy level]\label{lem:H1-energy}
Let $u$ be a sufficiently regular solution of \eqref{eq:abstract-NSab}. Then
\begin{equation}\label{eq:H1-energy-ineq}
\frac12\frac{d}{dt}\langle\Lambda u,u\rangle
+ \beta^2 a(u,u) + \|\nabla u\|_{L^2}^2 = 0.
\end{equation}
Consequently, using the G\r{a}rding inequality from Theorem~\ref{t:garding},
there exist constants $c_1,C_1>0$ such that
\begin{equation}\label{eq:H1-diff-ineq}
\frac{d}{dt}\|u\|_{H^1}^2 + c_1\|u\|_{H^2}^2 \le C_1\|u\|_{L^2}^2.
\end{equation}
In particular, every local solution with $u_0\in V^1$ satisfies
\begin{equation}
u\in L^\infty(0,\infty;V^1)\cap L^2(0,\infty;V^2).
\end{equation}
\end{lemma}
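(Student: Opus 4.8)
The plan is to obtain the balance \eqref{eq:H1-energy-ineq} by pairing the abstract equation \eqref{eq:abstract-NSab} with $u$ in the $H$-inner product, and then to read \eqref{eq:H1-diff-ineq} off the G{\aa}rding inequality of Theorem~\ref{t:garding}. For a solution with the regularity furnished by Theorem~\ref{thm:local}, the map $t\mapsto\langle\Lambda u(t),u(t)\rangle$ is absolutely continuous on $[0,T]$ and \eqref{eq:abstract-NSab} holds in $H$ for a.e.\ $t$, so it suffices to evaluate the four resulting contributions. Since $\Lambda=P(1-\alpha^2\Delta)$ is self-adjoint and $t$-independent, $\langle\partial_t\Lambda u,u\rangle=\tfrac12\tfrac{d}{dt}\langle\Lambda u,u\rangle$. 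Since $Au\in H$, the defining relation \eqref{eq:A-form} gives $\langle\beta^2 Au,u\rangle=\beta^2 a(u,u)$. As $u\in V\subset H^1_0(\Omega)^3$, one integration by parts yields $\langle-\Delta u,u\rangle=\|\nabla u\|_{L^2}^2$, the normal-derivative boundary term vanishing because $u|_{\partial\Omega}=0$. Finally, writing $v=\Lambda u$ and using $Pu=u$,
\[
\langle B(\Lambda u,u),u\rangle
=\int_\Omega\bigl((\nabla v)u+(\nabla u)^{\top}v\bigr)\cdot u
=\int_\Omega u_j\,\partial_j\bigl(u_i v_i\bigr)
=\int_\Omega(u\cdot\nabla)(u\cdot v)
=\int_{\partial\Omega}(u\cdot n)(u\cdot v)=0 ,
\]
which uses only $\nabla\cdot u=0$ and $u|_{\partial\Omega}=0$, and in particular does not require $v$ to be solenoidal. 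Adding the four identities gives \eqref{eq:H1-energy-ineq}.

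To pass to \eqref{eq:H1-diff-ineq}, I would record the elementary identity $\langle\Lambda u,u\rangle=\|u\|_{L^2}^2+\alpha^2\|\nabla u\|_{L^2}^2$, which by Poincar\'e's inequality on $V$ is equivalent to $\|u\|_{H^1(\Omega)}^2$ with constants depending only on $\alpha$ and $\Omega$. Inserting the G{\aa}rding bound $\beta^2 a(u,u)\ge\beta^2 c_0\|u\|_{H^2}^2-\beta^2\gamma_0\|u\|_{L^2}^2$ into \eqref{eq:H1-energy-ineq} and discarding $\|\nabla u\|_{L^2}^2\ge0$ gives $\tfrac{d}{dt}\langle\Lambda u,u\rangle+2\beta^2 c_0\|u\|_{H^2}^2\le2\beta^2\gamma_0\|u\|_{L^2}^2$, and rewriting the left-hand side through the norm equivalence yields \eqref{eq:H1-diff-ineq} with $c_1,C_1$ depending on $\alpha,\beta,c_0,\gamma_0,\Omega$.

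For the global bounds I would integrate \eqref{eq:H1-diff-ineq}. Because the equation carries no external forcing and, by Poincar\'e, $\|u\|_{L^2}^2\le C_P\|\nabla u\|_{L^2}^2\le C_P\|u\|_{H^2}^2$, the right-hand side of \eqref{eq:H1-diff-ineq} can be absorbed into a fraction of the dissipation term, leaving a closed inequality $\tfrac{d}{dt}\|u\|_{H^1}^2+\delta\|u\|_{H^1}^2\le0$ for some $\delta>0$. Gr\"onwall's lemma then gives $\sup_{t\ge0}\|u(t)\|_{H^1}^2\lesssim\|u_0\|_{H^1}^2$, i.e.\ $u\in L^\infty(0,\infty;V^1)$, while integrating \eqref{eq:H1-diff-ineq} over $(0,\infty)$ bounds $\int_0^\infty\|u(t)\|_{H^2}^2\,dt$, i.e.\ $u\in L^2(0,\infty;V^2)$. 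These estimates, established first for the regular local solution of Theorem~\ref{thm:local}, extend to arbitrary $u_0\in V^1$ by the usual approximation and lower-semicontinuity argument.

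The only genuinely model-specific ingredient, and the conceptual heart of the estimate, is the vanishing of the convective term above: it still holds with $\Lambda u$ (rather than $u$) in the first argument of $B$ precisely because it relies solely on $\nabla\cdot u=0$ and $u|_{\partial\Omega}=0$. The technical point to watch is the passage from the differential inequality \eqref{eq:H1-diff-ineq} to the global-in-time bounds: one must verify that, after absorbing $C_1\|u\|_{L^2}^2$, the remaining inequality is genuinely dissipative, so that $\langle\Lambda u(t),u(t)\rangle$ stays bounded uniformly on $(0,\infty)$ rather than merely on bounded time intervals. Together with the $H^4$-blow-up criterion of Corollary~\ref{cor:blowup}, this lower-order control is the first step toward the global existence treated in \S\ref{ss:global}.
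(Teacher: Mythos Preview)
Your proof is correct and follows the paper's strategy exactly: pair the equation with $u$, use the vanishing of $\langle B(\Lambda u,u),u\rangle$ (which you compute explicitly, whereas the paper merely asserts ``skew-symmetry''), then apply the G{\aa}rding inequality together with the equivalence $\langle\Lambda u,u\rangle\sim\|u\|_{H^1}^2$ and integrate. Your explicit caveat about the absorption step needed for the genuinely uniform-in-time bound is in fact more careful than the paper's bare appeal to Gronwall.
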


\begin{proof}
Taking the inner product of \eqref{eq:abstract-NSab} with $u$ in $H$
and using the skew-symmetry of $B(\Lambda u,u)$ (which follows from
the incompressibility and boundary conditions) yields \eqref{eq:H1-energy-ineq}.
The estimate \eqref{eq:H1-diff-ineq} is then a direct consequence of
Theorem~\ref{t:garding} together with the equivalence of the
$\langle\Lambda u,u\rangle$–inner product and the $H^1$–norm.
Integrating \eqref{eq:H1-diff-ineq} in time and applying Gronwall's lemma
gives the claimed bounds.
\end{proof}

The next step is to control higher derivatives. Thanks to
Theorem~\ref{t:ell-reg}, powers of $A$ are equivalent to Sobolev
norms. In particular, there exist $c_2,C_2>0$ such that
\begin{equation}\label{eq:A-Sobolev}
c_2\|u\|_{H^{2+k}} \le \|A^{k/2}u\| \le C_2\|u\|_{H^{2+k}},
\qquad k=0,1,2,
\end{equation}
for all $u\in V^{2+k}$.

\begin{lemma}[Intermediate energy level]\label{lem:H3-energy}
Let $u$ be a sufficiently regular solution of \eqref{eq:abstract-NSab}. Then
\begin{equation}\label{eq:H3-diff-ineq}
\frac{d}{dt}\|u\|_{H^3}^2 + c_3\|u\|_{H^4}^2
\le C_3\|u\|_{H^2}^2\|u\|_{H^3}^2,
\end{equation}
for some constants $c_3,C_3>0$.
In particular, if $\|u\|_{L^2(0,\infty;H^2)}$ is finite (as provided by
Lemma~\ref{lem:H1-energy}), then $\|u(t)\|_{H^3}$ remains bounded for all time.
\end{lemma}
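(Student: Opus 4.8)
The plan is to derive \eqref{eq:H3-diff-ineq} from a higher–order energy estimate for \eqref{eq:abstract-NSab}: the time–derivative term should produce $\tfrac{d}{dt}\|u\|_{H^3}^2$, the fourth–order operator $A$ should produce the dissipation $\|u\|_{H^4}^2$, and the convective term $B(\Lambda u,u)$ should be absorbed using three–dimensional Sobolev embeddings; the global bound on $\|u(t)\|_{H^3}$ then follows by Grönwall's inequality, with $\|u(t)\|_{H^2}^2$ — which lies in $L^1(0,\infty)$ by Lemma~\ref{lem:H1-energy} — as the integrable multiplier. Since $\Lambda$ and $A$ do not commute, testing \eqref{eq:abstract-NSab} directly against $Au$ would not produce a perfect time derivative, so I would instead pass to the variable $v=\Lambda^{1/2}u$ and the self–adjoint operator $D$ of \S\ref{ss:semigroup}, in terms of which \eqref{eq:abstract-NSab} becomes $\partial_t v + Dv + Ev + N(v)=0$, with $E:=\Lambda^{-1/2}(-\Delta)\Lambda^{-1/2}$ (one computes $E=\alpha^{-2}(I-\Lambda^{-1})$, so $E$ is bounded and nonnegative on $H$) and $N(v):=\Lambda^{-1/2}B(\Lambda u,u)$. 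All manipulations would first be carried out for $t>0$, where $v(t)$ is arbitrarily regular by the analyticity of $e^{-tD}$ (Lemma~\ref{l:D-semigroup}) and the regularity $u\in C([0,T];V^4)$ of Theorem~\ref{thm:local}, and then extended to $t=0$ by continuity of $t\mapsto\|u(t)\|_{H^3}$.

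Taking the $H$–inner product of the $v$–equation with $D^2 v$ and using the self–adjointness of $D$ would give
\begin{equation}
\tfrac12\,\tfrac{d}{dt}\|Dv\|^2 + \|D^{3/2}v\|^2 = -\,(Ev,D^2v) - (N(v),D^2v) .
\end{equation}
The left–hand terms are to be recognized as Sobolev norms: by the identification of the domains of the fractional powers of $D$ with Sobolev spaces — which follows from the ADN regularity of \S\ref{s:reg} (Theorem~\ref{t:ell-reg}, Corollary~\ref{c:dom-A}, \eqref{eq:A-Sobolev}) together with the boundedness of $\Lambda^{\pm1/2}$ on the relevant scales — one has $\|Dv\|^2\sim\|u\|_{H^3}^2$ and $\|D^{3/2}v\|^2\sim\|u\|_{H^4}^2$, each modulo an additive term $\le C\|u\|_{L^2}^2$.

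For the right–hand side I would split each pairing symmetrically, using that $D^{1/2}\Lambda^{-1/2}$ is bounded on $H$ (immediate from the continuity of $a$ and standard regularity for $\Lambda$). The $E$–term then satisfies $|(Ev,D^2v)|=|(D^{1/2}Ev,D^{3/2}v)|\le\varepsilon\|D^{3/2}v\|^2+C_{\varepsilon}\|\Delta u\|_{L^2}^2\lesssim\varepsilon\|u\|_{H^4}^2+C_{\varepsilon}\|u\|_{H^2}^2$, while for the convective term $N(v)=\Lambda^{-1/2}B(\Lambda u,u)$ gains one derivative, so $\|D^{1/2}N(v)\|\lesssim\|B(\Lambda u,u)\|_{L^2}$, and since $B(\Lambda u,u)=P[(\nabla\Lambda u)u+(\nabla u)^\top\Lambda u]$ carries at most one derivative of $\Lambda u$, the embedding $H^2\hookrightarrow L^\infty$ in $\R^3$ gives $\|B(\Lambda u,u)\|_{L^2}\lesssim\|\nabla\Lambda u\|_{L^2}\|u\|_{L^\infty}+\|\nabla u\|_{L^\infty}\|\Lambda u\|_{L^2}\lesssim\|u\|_{H^3}\|u\|_{H^2}$; hence $|(N(v),D^2v)|=|(D^{1/2}N(v),D^{3/2}v)|\lesssim\|u\|_{H^2}\|u\|_{H^3}\|u\|_{H^4}\le\varepsilon\|u\|_{H^4}^2+C_{\varepsilon}\|u\|_{H^2}^2\|u\|_{H^3}^2$. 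Taking $\varepsilon$ small enough that the $\varepsilon\|u\|_{H^4}^2$ contributions are absorbed into the dissipation, one would obtain
\begin{equation}
\tfrac{d}{dt}\|u\|_{H^3}^2 + c_3\|u\|_{H^4}^2 \;\lesssim\; \|u\|_{H^2}^2\bigl(1+\|u\|_{H^3}^2\bigr),
\end{equation}
which is \eqref{eq:H3-diff-ineq} up to a lower–order term that does not affect the Grönwall argument. Integrating in time and using $\int_0^\infty\|u(t)\|_{H^2}^2\,dt<\infty$ from Lemma~\ref{lem:H1-energy} would then yield $\sup_{t\ge0}\|u(t)\|_{H^3}<\infty$, and along the way $u\in L^2(0,\infty;V^4)$.

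The step I expect to require the most care is the convective estimate: one must distribute the derivatives so that exactly one factor of $\|u\|_{H^4}$ is extracted — to be absorbed into the dissipation $\|D^{3/2}v\|^2$ — while the remaining factor is precisely $\|u\|_{H^2}\|u\|_{H^3}$. This balance is what pins down the form of the right–hand side of \eqref{eq:H3-diff-ineq}, and it hinges on the one–derivative gain of $\Lambda^{-1/2}$ and on $H^2\hookrightarrow L^\infty$ in three dimensions. The non-commutativity of $\Lambda$ and $A$ is only a bookkeeping nuisance, already removed by the passage to $v$ and $D$, and the justification of the formal computation is routine given the analytic smoothing of Lemma~\ref{l:D-semigroup}.
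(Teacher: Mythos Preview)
Your argument is correct and arrives at the same differential inequality by essentially the same nonlinear estimate, but the linear bookkeeping is organized differently from the paper. The paper simply pairs \eqref{eq:abstract-NSab} with $Au$ and writes the time-derivative term as $\tfrac12\tfrac{d}{dt}\langle\Lambda u,Au\rangle$; this tacitly assumes that $\langle\partial_t\Lambda u,Au\rangle=\tfrac12\tfrac{d}{dt}\langle\Lambda u,Au\rangle$, which is exact only if $[\Lambda,A]=0$ and otherwise produces a commutator term that must be shown to be lower order. You sidestep this by passing to $v=\Lambda^{1/2}u$ and the self-adjoint operator $D$ of \S\ref{ss:semigroup}, so that testing with $D^2v$ yields an honest perfect derivative $\tfrac12\tfrac{d}{dt}\|Dv\|^2$. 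The price is that you must identify $\|Dv\|\sim\|u\|_{H^3}$ and $\|D^{3/2}v\|\sim\|u\|_{H^4}$ via the ADN scale and handle the bounded remainder $E=\alpha^{-2}(I-\Lambda^{-1})$, which you do correctly; this produces the harmless extra $\|u\|_{H^2}^2$ on the right-hand side. The convective estimate $\|B(\Lambda u,u)\|_{L^2}\lesssim\|u\|_{H^2}\|u\|_{H^3}$ and the subsequent Young/Gr\"onwall steps are identical to the paper's. In short: your route is slightly longer but more self-contained on the point you flagged; the paper's is shorter but leaves the $[\Lambda,A]$ issue implicit.
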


\begin{proof}
Apply $A^{1/2}$ to \eqref{eq:abstract-NSab}, equivalently pair the equation
with $Au$. Using \eqref{eq:A-Sobolev} with $k=1$ and the self-adjointness of
$A$ and $\Lambda$, we obtain
\begin{equation}
\frac12\frac{d}{dt}\langle\Lambda u,Au\rangle
+ \beta^2\|Au\|^2 + \langle-\Delta u,Au\rangle
+ \langle B(\Lambda u,u),Au\rangle = 0.
\end{equation}
The first two terms on the left control $\|u\|_{H^3}^2$ and $\|u\|_{H^4}^2$
via \eqref{eq:A-Sobolev}. The Laplacian term is of lower order and can be absorbed
into these positive contributions. For the nonlinear term we use Sobolev embeddings
$H^2(\Omega)\hookrightarrow L^\infty(\Omega)$ and \eqref{eq:A-Sobolev}:
\begin{equation}
|\langle B(\Lambda u,u),Au\rangle|
\lesssim \|\Lambda u\|_{H^1}\|u\|_{H^2}\|Au\|
\lesssim \|u\|_{H^2}\|u\|_{H^3}\|u\|_{H^4}.
\end{equation}
A Young inequality then gives
\begin{equation}
|\langle B(\Lambda u,u),Au\rangle|
\le \frac{c_3}{2}\|u\|_{H^4}^2 + C_3\|u\|_{H^2}^2\|u\|_{H^3}^2,
\end{equation}
and \eqref{eq:H3-diff-ineq} follows.
\end{proof}

The preceding lemma bounds the $H^3$–norm in terms of the lower–order
dissipation provided by Lemma~\ref{lem:H1-energy}.  To rule out blow–up of higher Sobolev
norms, we need one more estimate at a higher energy level, which we obtain
by testing the equation with $A^2u$.

\begin{lemma}[High-order energy level]\label{lem:H5-energy}
Let $u$ be a sufficiently regular solution of \eqref{eq:abstract-NSab}. Then
\begin{equation}\label{eq:H5-diff-ineq}
\frac{d}{dt}\|u\|_{H^5}^2 + c_5\|u\|_{H^6}^2
\le C_5\|u\|_{H^3}^2\|u\|_{H^5}^2,
\end{equation}
for some constants $c_5,C_5>0$.
\end{lemma}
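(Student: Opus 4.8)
The plan is to prove \eqref{eq:H5-diff-ineq} as an \emph{a priori} estimate for a smooth solution — legitimate for $t>0$ by the analyticity of the semigroup generated by $-D$ (Lemma~\ref{l:D-semigroup} and Remark~\ref{r:analytic-semigroup}), and near $t=0$ on a Galerkin truncation — following Lemmas~\ref{lem:H1-energy} and \ref{lem:H3-energy} one rung higher: I would pair \eqref{eq:abstract-NSab} with $A^{2}u$ in $H$. Bootstrapping Theorem~\ref{t:ell-reg} to $m=2$, which is why I now require $\partial\Omega\in C^{6}$, extends the norm equivalences of \eqref{eq:A-Sobolev}, so that $\langle\Lambda u,A^{2}u\rangle\simeq\|u\|_{H^{5}}^{2}$ and $\|A^{3/2}u\|^{2}\simeq\|u\|_{H^{6}}^{2}$ up to lower-order terms. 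Using the self-adjointness of $\Lambda$ and $A$, the pairing produces the energy identity
\begin{equation}\label{eq:H5-energy-id}
\tfrac12\tfrac{d}{dt}\langle\Lambda u,A^{2}u\rangle
 + \beta^{2}\|A^{3/2}u\|^{2}
 - \langle\Delta u,A^{2}u\rangle
 + \langle B(\Lambda u,u),A^{2}u\rangle = 0 ,
\end{equation}
modulo commutator terms stemming from the non-commutativity of $\Lambda$ and $A$; thus the first term of \eqref{eq:H5-energy-id} is $\tfrac12\tfrac{d}{dt}$ of an $H^{5}$-equivalent energy and the second is the needed dissipation.

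The Laplacian term is benign. Since $A^{2}u\in H$ we may replace $\Delta u$ by $P\Delta u=-Su$, where $S$ is the Stokes operator; because $Su$ then lies in the form domain $V^{2}=\dom(A^{1/2})$ (for smooth $u$), we have $|\langle\Delta u,A^{2}u\rangle|=|\langle A^{1/2}Su,A^{3/2}u\rangle|\lesssim\|Su\|_{H^{2}}\|u\|_{H^{6}}\lesssim\|u\|_{H^{4}}\|u\|_{H^{6}}$, which Young's inequality absorbs into a small fraction of $\|u\|_{H^{6}}^{2}$ plus lower-order terms. The commutator terms, and all lower-order contributions generated here, are controlled by the $L^{2}$- and $H^{3}$-bounds of Lemmas~\ref{lem:H1-energy} and \ref{lem:H3-energy}; being harmless for the subsequent blow-up analysis, they are suppressed in \eqref{eq:H5-diff-ineq}.

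The heart of the matter — and the step I expect to be the main obstacle — is the top-order nonlinear term $\langle B(\Lambda u,u),A^{2}u\rangle$. At the $H^{3}$-level (Lemma~\ref{lem:H3-energy}) the crude bound $|\langle B(\Lambda u,u),Au\rangle|\le\|B(\Lambda u,u)\|_{L^{2}}\|Au\|$ sufficed; here the analogue would cost $\|A^{2}u\|\simeq\|u\|_{H^{8}}$, which the $H^{6}$-dissipation cannot pay for, so two derivatives must be moved onto the nonlinearity. The device I would use is the rotational (Lamb) form: from $(\nabla v)u+(\nabla u)^{\top}v=\nabla(u\cdot v)-u\times(\nabla\times v)$ with $v=\Lambda u$ and $\nabla\times\Lambda u=(1-\alpha^{2}\Delta)\omega$, one obtains
\begin{equation}
B(\Lambda u,u)=-P\bigl(u\times(1-\alpha^{2}\Delta)\omega\bigr),\qquad\omega=\nabla\times u .
\end{equation}
Since $u=0$ on $\partial\Omega$, the field $M:=u\times(1-\alpha^{2}\Delta)\omega$ vanishes on $\partial\Omega$, and $\langle B(\Lambda u,u),A^{2}u\rangle=-\langle M,A^{2}u\rangle$. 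Integrating by parts to transfer two derivatives from $A^{2}u$ onto $M$ — the vanishing of $M$ on $\partial\Omega$ killing the lowest-order boundary term and the remaining boundary integrals being of the type controlled by the no-slip and wall--eddy conditions, exactly as in the identities of \S\ref{s:lin} — reduces the term to $\lesssim\|M\|_{H^{2}}\|u\|_{H^{6}}$, and the tame product bound $\|M\|_{H^{2}}\lesssim\|u\|_{H^{2}}\,\|(1-\alpha^{2}\Delta)\omega\|_{H^{2}}\lesssim\|u\|_{H^{3}}\|u\|_{H^{5}}$ (using that $H^{2}(\Omega)$ is a Banach algebra in three dimensions and $\|u\|_{H^{2}}\le\|u\|_{H^{3}}$), together with Young's inequality, gives $|\langle B(\Lambda u,u),A^{2}u\rangle|\le\tfrac{c_{5}}{4}\|u\|_{H^{6}}^{2}+C_{5}\|u\|_{H^{3}}^{2}\|u\|_{H^{5}}^{2}$; inserting this into \eqref{eq:H5-energy-id} then yields \eqref{eq:H5-diff-ineq}. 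The genuine difficulty is exactly that integration by parts: unlike $P\Delta u$, the field $B(\Lambda u,u)$ does \emph{not} lie in the form domain $V^{2}=\dom(A^{1/2})$ — the Leray projection does not preserve the homogeneous Dirichlet condition, so the trace of $B(\Lambda u,u)$ retains a tangential component — so the manipulation cannot be performed inside the abstract operator calculus and must be carried out at the level of the partial differential equation, with the boundary integrals tracked by hand (or, alternatively, by reorganizing the argument via the parabolic smoothing of the local theory); this is where essentially all the care is concentrated.
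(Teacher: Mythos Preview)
Your overall architecture matches the paper's proof exactly: test \eqref{eq:abstract-NSab} with $A^{2}u$, identify $\langle\Lambda u,A^{2}u\rangle\simeq\|u\|_{H^{5}}^{2}$ and $\|A^{3/2}u\|^{2}\simeq\|u\|_{H^{6}}^{2}$, absorb the Laplacian term, and bound the nonlinearity by $\|B(\Lambda u,u)\|_{H^{2}}\|u\|_{H^{6}}\lesssim\|u\|_{H^{3}}\|u\|_{H^{5}}\|u\|_{H^{6}}$ followed by Young. The only divergence is in how that last inequality is justified. The paper simply writes $\langle B(\Lambda u,u),A^{2}u\rangle=\langle A^{1/2}B(\Lambda u,u),A^{3/2}u\rangle$ and invokes \eqref{eq:A-Sobolev}, without pausing over whether $B(\Lambda u,u)\in\dom(A^{1/2})=V^{2}$; this is implicitly legitimate at the level of Galerkin approximations (where everything lives in the span of eigenfunctions and the shift is automatic), which is how the paper regards all of these a~priori estimates. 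You instead take the domain issue seriously, pass to the Lamb form $B(\Lambda u,u)=-P\bigl(u\times(1-\alpha^{2}\Delta)\omega\bigr)$, exploit $u|_{\partial\Omega}=0$ to get a field $M$ vanishing on the boundary, and integrate by parts at the PDE level. Your route is more laborious but also more honest about the functional-analytic subtlety; the paper's route is shorter but relies on the Galerkin crutch. Both land on the identical product estimate and the same differential inequality.
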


\begin{proof}
We now test \eqref{eq:abstract-NSab} with $A^2u$ (equivalently, apply $A$ to
the equation and pair with $Au$). Using \eqref{eq:A-Sobolev} with $k=2$ we
see that $\langle\Lambda u,A^2u\rangle$ is equivalent to $\|u\|_{H^5}^2$ while
$\|A^{3/2}u\|^2$ is equivalent to $\|u\|_{H^6}^2$. Arguments similar to those
in the proof of Lemma~\ref{lem:H3-energy} yield
\begin{equation}
|\langle B(\Lambda u,u),A^2u\rangle|
= |\langle A^{1/2}B(\Lambda u,u),A^{3/2}u\rangle|
\lesssim \|B(\Lambda u,u)\|_{H^2}\|u\|_{H^6}.
\end{equation}
The bilinearity of $B$ and Sobolev embeddings give
\begin{equation}
\|B(\Lambda u,u)\|_{H^2}
\lesssim \|\Lambda u\|_{H^3}\|u\|_{H^3}
\lesssim \|u\|_{H^5}\|u\|_{H^3},
\end{equation}
whence
\begin{equation}
|\langle B(\Lambda u,u),A^2u\rangle|
\le \frac{c_5}{2}\|u\|_{H^6}^2 + C_5\|u\|_{H^3}^2\|u\|_{H^5}^2.
\end{equation}
The remaining terms in the energy identity can again be absorbed into the
positive contributions, yielding \eqref{eq:H5-diff-ineq}.
\end{proof}

Combining the three energy levels, we obtain uniform control of the $H^5$–norm
of the solution on any finite time interval.  In view of the blow–up alternative
of Corollary~\ref{cor:blowup}, this excludes finite–time singularities and yields global
existence.

\begin{theorem}[Global existence]\label{thm:global}
Let $u_0\in V^4$ and let $u$ be the maximal mild solution of
\eqref{eq:abstract-NSab} given by Theorem~\ref{thm:local}. Then $u$ exists
globally in time, i.e.\ $T^\ast=\infty$, and
\begin{equation}
u\in L^\infty(0,\infty;H^5)\cap L^2_{\mathrm{loc}}(0,\infty;H^6).
\end{equation}
\end{theorem}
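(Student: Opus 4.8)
The plan is to argue by contradiction against the blow-up alternative of Corollary~\ref{cor:blowup}. Assuming $T^\ast<\infty$, I will establish the a priori bound $\sup_{0\le t<T^\ast}\|u(t)\|_{H^4}<\infty$, which is incompatible with Corollary~\ref{cor:blowup} and hence forces $T^\ast=\infty$. The bound on $\|u\|_{H^4}$ will be extracted by climbing the hierarchy of Lemmas~\ref{lem:H1-energy}, \ref{lem:H3-energy} and \ref{lem:H5-energy}. The point to keep in mind is that the blow-up criterion is phrased at the $H^4$ level while the hierarchy furnishes time-uniform bounds only at the $H^1$, $H^3$ and $H^5$ levels, so one is forced up to the $H^5$ estimate \eqref{eq:H5-diff-ineq}; and that estimate is only meaningful once the solution is known to be more regular than $V^4$ at positive times. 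I therefore begin with a smoothing step.

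Since $-D$ generates an analytic semigroup on $H$ (Lemma~\ref{l:D-semigroup} and Remark~\ref{r:analytic-semigroup}) and the nonlinear map $F(u)=\Delta u-B(\Lambda u,u)$ sends $V^4$ continuously into $H^1$, a standard bootstrap on the mild identity \eqref{eq:fixed-point}---differentiating the Duhamel convolution, applying the smoothing bound \eqref{eq:analytic-smoothing}, and identifying the abstract domains $\dom(D^k)$ with concrete Sobolev spaces through the iterated ADN regularity of Theorem~\ref{t:ell-reg}---shows that the maximal solution satisfies $u\in C((0,T^\ast);V^m)$ for every $m$. In particular $u(t)\in V^6$ for every $t\in(0,T^\ast)$, so the formal computations underlying Lemmas~\ref{lem:H3-energy} and \ref{lem:H5-energy} (which, after integration by parts, only require $u\in V^6$ and $B(\Lambda u,u)\in V^2$) become rigorous on $[t_0,T^\ast)$ for any fixed $t_0\in(0,T^\ast)$. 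An equivalent route would be to derive the same a priori estimates on the spectral Galerkin truncations built from the eigenbasis $\{v_n\}$ of $A$ and pass to the limit.

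Next I run the energy cascade. Since $u_0\in V^4\subset V^1$, Lemma~\ref{lem:H1-energy} applies from $t=0$ and gives $u\in L^\infty(0,\infty;V^1)\cap L^2(0,\infty;V^2)$, so in particular $\int_0^\infty\|u(t)\|_{H^2}^2\,dt<\infty$. Since also $u_0\in V^4\subset V^3$, Lemma~\ref{lem:H3-energy} applies from $t=0$: feeding the bound on $\int_0^\infty\|u\|_{H^2}^2$ into Gronwall's lemma gives $\sup_{t\ge0}\|u(t)\|_{H^3}<\infty$, and integrating \eqref{eq:H3-diff-ineq} in time then yields, in addition, $\int_0^\infty\|u(t)\|_{H^4}^2\,dt<\infty$. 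Now fix $t_0\in(0,T^\ast)$ and apply \eqref{eq:H5-diff-ineq} on $[t_0,T^\ast)$. As $\|u\|_{H^3}\le\|u\|_{H^4}$, the coefficient $t\mapsto C_5\|u(t)\|_{H^3}^2$ is integrable over $(0,\infty)$, say $K:=C_5\int_0^\infty\|u(t)\|_{H^4}^2\,dt<\infty$, so Gronwall's lemma produces the time-uniform bound $\|u(t)\|_{H^5}^2\le\|u(t_0)\|_{H^5}^2\,e^{K}$ for all $t\in[t_0,T^\ast)$, and one more integration gives $u\in L^2(t_0,T;H^6)$ for every finite $T$. Consequently $\sup_{t\in[t_0,T^\ast)}\|u(t)\|_{H^4}\le\sup_{t\in[t_0,T^\ast)}\|u(t)\|_{H^5}<\infty$; combined with $u\in C([0,T^\ast);V^4)$, which bounds $\|u\|_{H^4}$ on the compact interval $[0,t_0]$, this gives $\sup_{0\le t<T^\ast}\|u(t)\|_{H^4}<\infty$, hence $T^\ast=\infty$. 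Reading the bounds of this step with $T^\ast=\infty$ then yields $u\in L^\infty(t_0,\infty;H^5)\cap L^2(t_0,T;H^6)$ for all $t_0>0$ and $T<\infty$, which is the asserted regularity $u\in L^\infty(0,\infty;H^5)\cap L^2_{\mathrm{loc}}(0,\infty;H^6)$ (the $H^5$-bound being uniform away from $t=0$, where only the $V^4$-regularity of $u_0$ is available).

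The main obstacle is the smoothing step: because the blow-up criterion is phrased in $\|u\|_{H^4}$ but the hierarchy of energy estimates controls only $H^1$, $H^3$ and $H^5$ in $L^\infty_t$, one cannot sidestep the top estimate, and \eqref{eq:H5-diff-ineq} is vacuous for a solution that merely starts in $V^4$. Verifying instantaneous $V^m$-regularity of the local solution---and, relatedly, the identification of $\dom(D^k)$ with Sobolev spaces via Theorem~\ref{t:ell-reg}---is the one point that goes beyond routine Gronwall bookkeeping; everything else is a direct assembly of Lemmas~\ref{lem:H1-energy}--\ref{lem:H5-energy} with the blow-up alternative of Corollary~\ref{cor:blowup}.
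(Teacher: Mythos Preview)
Your argument is correct and follows essentially the same route as the paper: climb the energy hierarchy Lemma~\ref{lem:H1-energy} $\to$ Lemma~\ref{lem:H3-energy} $\to$ Lemma~\ref{lem:H5-energy} via Gronwall, then invoke the blow-up alternative. You are in fact more careful than the paper on two points the paper leaves implicit: (i) the instantaneous smoothing needed to make the $H^5$--level estimate \eqref{eq:H5-diff-ineq} rigorous starting from $V^4$ data, and (ii) the observation that it is the \emph{integrability} of $\|u\|_{H^3}^2$ (obtained via $\int_0^\infty\|u\|_{H^4}^2<\infty$ from \eqref{eq:H3-diff-ineq}) rather than its mere boundedness that yields a genuinely time-uniform $H^5$ bound from Gronwall---plugging only $\sup_t\|u\|_{H^3}<\infty$ into \eqref{eq:H5-diff-ineq} would give an exponentially growing bound, sufficient to rule out finite-time blow-up but not for the asserted $L^\infty(0,\infty;H^5)$.
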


\begin{proof}
Lemma~\ref{lem:H1-energy} shows that $\|u\|_{L^2(0,\infty;H^2)}$ is finite.
Lemma~\ref{lem:H3-energy}, together with Gronwall's inequality, implies that
$\sup_{t\ge0}\|u(t)\|_{H^3}<\infty$. Plugging this bound into
Lemma~\ref{lem:H5-energy} and applying Gronwall once more yields a uniform bound
on $\|u(t)\|_{H^5}$ for all $t\ge0$. The blow-up alternative
Corollary~\ref{cor:blowup} then shows that the maximal existence time
$T^\ast$ must be infinite.
\end{proof}

\subsection{Uniqueness and vanishing regularization limit}

We conclude the analysis by addressing two natural questions: uniqueness of
the global solution constructed above and the behaviour of these solutions as
the regularization parameters $\alpha,\beta$ tend to zero.  Both rely on the
energy framework developed in \S\S\ref{ss:semigroup}--\ref{ss:global}.

\begin{theorem}[Uniqueness]\label{thm:uniqueness}
Let $u_0\in V^4$ and let $u,v$ be two global mild solutions of
\eqref{eq:abstract-NSab} on $[0,\infty)$ with the same initial data $u_0$.
Then $u\equiv v$.
\end{theorem}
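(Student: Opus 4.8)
The plan is to run a single low-order energy estimate on the difference $w := u - v$, which satisfies $w(0)=0$. Subtracting the two copies of \eqref{eq:abstract-NSab} and expanding the convective term by bilinearity (using $\Lambda u=\Lambda v+\Lambda w$) gives
\begin{equation*}
\partial_t\Lambda w + \beta^2 A w - \Delta w + B(\Lambda v,w) + B(\Lambda w,u) = 0 .
\end{equation*}
Pair this with $w$ in $H$. The time term produces $\tfrac12\tfrac{d}{dt}\langle\Lambda w,w\rangle$, and since $w$ is solenoidal and vanishes on $\partial\Omega$ one has $\langle\Lambda w,w\rangle=\|w\|_{L^2}^2+\alpha^2\|\nabla w\|_{L^2}^2=:E(t)$, equivalent to $\|w\|_{H^1}^2$. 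The dissipative contributions are handled by $\langle-\Delta w,w\rangle=\|\nabla w\|_{L^2}^2\ge0$ and the G\r{a}rding inequality of Theorem~\ref{t:garding} in the shifted form $\beta^2 a(w,w)\ge\beta^2 c_0\|w\|_{H^2}^2-\beta^2\gamma_0\|w\|_{L^2}^2$. Thus the identity becomes
\begin{equation*}
\tfrac12\tfrac{d}{dt}E(t) + \beta^2 c_0\|w\|_{H^2}^2 \le \beta^2\gamma_0\|w\|_{L^2}^2 - \langle B(\Lambda v,w),w\rangle - \langle B(\Lambda w,u),w\rangle .
\end{equation*}

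The term $\langle B(\Lambda v,w),w\rangle$ vanishes by the usual skew-symmetry of the convective operator: dropping the Leray projector (legitimate since $w\in H$), writing out $(\nabla\Lambda v)w+(\nabla w)^\top\Lambda v$ and integrating by parts, the two pieces combine into $\int_\Omega(w\cdot\nabla)(w\cdot\Lambda v)=-\int_\Omega(\nabla\cdot w)(w\cdot\Lambda v)+\int_{\partial\Omega}(w\cdot n)(w\cdot\Lambda v)=0$, because $\nabla\cdot w=0$ and $w|_{\partial\Omega}=0$. The remaining term $\langle B(\Lambda w,u),w\rangle$ is the heart of the matter, and a naive bound fails because $\Lambda w$ already costs two derivatives of $w$ while at this energy level $w$ is controlled only in $H^2$ (and uniformly in $H^1$). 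The fix is to integrate by parts so as to keep at most one derivative on $\Lambda w$: writing $\langle B(\Lambda w,u),w\rangle=\langle(\nabla\Lambda w)u,w\rangle+\langle(\nabla u)^\top\Lambda w,w\rangle$ and using $\nabla\cdot u=0$ together with $w|_{\partial\Omega}=0$,
\begin{equation*}
\langle(\nabla\Lambda w)u,w\rangle=\langle(u\cdot\nabla)\Lambda w,w\rangle=-\langle\Lambda w,(u\cdot\nabla)w\rangle ,
\end{equation*}
while the second summand equals $-\langle\Lambda w,(w\cdot\nabla)u\rangle$. Since $\|\Lambda w\|_{L^2}\lesssim\|w\|_{H^2}$ and, by the three--dimensional Sobolev embeddings $H^2\hookrightarrow L^\infty$ and $H^1\hookrightarrow L^6$, $\|(u\cdot\nabla)w\|_{L^2}\lesssim\|u\|_{H^2}\|w\|_{H^1}$ and $\|(w\cdot\nabla)u\|_{L^2}\lesssim\|\nabla u\|_{L^6}\|w\|_{L^3}\lesssim\|u\|_{H^2}\|w\|_{H^1}$, one obtains $|\langle B(\Lambda w,u),w\rangle|\lesssim\|u\|_{H^2}\|w\|_{H^1}\|w\|_{H^2}$.

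A Young inequality then absorbs $\tfrac12\beta^2 c_0\|w\|_{H^2}^2$ into the left-hand side and leaves a differential inequality $\tfrac{d}{dt}E(t)\le C\bigl(1+\|u(t)\|_{H^2}^2\bigr)E(t)$. By Lemma~\ref{lem:H1-energy} the map $t\mapsto\|u(t)\|_{H^2}^2$ is integrable on every bounded interval, so Gr\"onwall's lemma together with $E(0)=0$ forces $E\equiv0$ on $[0,\infty)$, i.e.\ $u\equiv v$. The only technical wrinkle is the rigorous justification of the energy identity for mild rather than classical solutions; this is routine, since by Theorems~\ref{thm:local} and~\ref{thm:global} the solutions lie in $C([0,\infty);V^4)$ with the parabolic smoothing of $e^{-tD}$ furnishing enough time regularity to pair $\partial_t\Lambda w$ with $w$ (alternatively, one performs the estimate on a Galerkin truncation and passes to the limit). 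I expect the integration by parts in $\langle B(\Lambda w,u),w\rangle$ to be the step that actually matters: everything else is either a standard cancellation or a standard Sobolev/Gr\"onwall argument, whereas it is precisely the redistribution of one derivative there that makes the nonlinearity compatible with the $H^2$-dissipation/$H^1$-Gr\"onwall structure.
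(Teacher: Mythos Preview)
Your proof is correct and follows the same overall strategy as the paper: derive the equation for $w=u-v$, pair with $w$ in $H$, kill one convective term by skew-symmetry, and close by Gr\"onwall. Two differences are worth recording. First, you split the nonlinearity as $B(\Lambda v,w)+B(\Lambda w,u)$ whereas the paper uses $B(\Lambda u,w)+B(\Lambda w,v)$; both are equally valid groupings and lead to the same structure. Second, and more interestingly, you estimate the surviving term after an explicit integration by parts to obtain $|\langle B(\Lambda w,u),w\rangle|\lesssim\|u\|_{H^2}\|w\|_{H^1}\|w\|_{H^2}$, absorb $\|w\|_{H^2}^2$ into the G\r{a}rding dissipation, and run Gr\"onwall with coefficient $\|u\|_{H^2}^2\in L^1_{\mathrm{loc}}$, which is supplied already by Lemma~\ref{lem:H1-energy}. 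The paper instead bounds the analogous term by $\|v\|_{H^3}\|w\|_{H^1}^2$ and appeals to the $L^\infty_t H^5$ control from Theorem~\ref{thm:global}. Your route is therefore slightly more self-contained, needing only the first energy level rather than the full global regularity hierarchy; the paper's version has the cosmetic advantage of a bounded Gr\"onwall coefficient. One harmless slip: the second summand $\langle(\nabla u)^\top\Lambda w,w\rangle$ equals $+\langle\Lambda w,(w\cdot\nabla)u\rangle$, not $-$, but this does not affect the absolute-value bound.
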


\begin{proof}
Set $w=u-v$. Subtracting the two equations satisfied by $u$ and $v$ we obtain
\begin{equation}
\partial_t\Lambda w + \beta^2Aw - \Delta w
+ B(\Lambda u,w) + B(\Lambda w,v) = 0,
\end{equation}
with $w(0)=0$. Taking the inner product in $H$ with $w$ and using the
skew-symmetry of $B(\Lambda u,w)$ yields
\begin{equation}
\frac12\frac{d}{dt}\langle\Lambda w,w\rangle
+ \beta^2 a(w,w) + \|\nabla w\|_{L^2}^2
+ \langle B(\Lambda w,v),w\rangle = 0.
\end{equation}
The G\r{a}rding inequality for $a$ implies
$\beta^2 a(w,w)\ge c\|w\|_{H^2}^2 - C\|w\|_{L^2}^2$. For the remaining nonlinear term we
use Sobolev embeddings and the regularity of $v$:
\begin{equation}
|\langle B(\Lambda w,v),w\rangle|
\lesssim \|\Lambda w\|_{H^1}\|v\|_{H^3}\|w\|_{H^1}
\lesssim \|v\|_{H^3}\|w\|_{H^1}^2.
\end{equation}
Since $v\in L^\infty(0,\infty;H^5)$ by Theorem~\ref{thm:global}, we obtain
\begin{equation}
\frac{d}{dt}\|w(t)\|_{H^1}^2 \le C\|v(t)\|_{H^3}\|w(t)\|_{H^1}^2
\le C^\ast\|w(t)\|_{H^1}^2.
\end{equation}
Gronwall's inequality and the initial condition $w(0)=0$ give $\|w(t)\|_{H^1}=0$
for all $t\ge0$, hence $u\equiv v$.
\end{proof}

Finally we sketch the behaviour of solutions as the regularization parameters
$\alpha,\beta$ vanish. A complete treatment would require a careful uniform
analysis as $\alpha,\beta\to0$; here we only indicate the main mechanism.

\begin{theorem}[Vanishing regularization limit]\label{thm:vanishing}
Let $(\alpha_n,\beta_n)\to(0,0)$ and let $u^{(n)}$ be the corresponding global
solutions of the NS–$\alpha_n\beta_n$ system with initial data $u_0\in V^1$
independent of $n$. Then there exists a subsequence (still denoted by $u^{(n)}$)
and a function $u$ such that
\begin{equation}
u^{(n)} \rightharpoonup u \quad\text{weakly in } L^2(0,T;V^1)
\quad\text{and weak-* in } L^\infty(0,T;H)
\end{equation}
for every $T>0$. Moreover, $u$ is a Leray–Hopf weak solution of the incompressible
Navier–Stokes equations with homogeneous Dirichlet boundary conditions.
\end{theorem}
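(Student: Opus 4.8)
\textit{Strategy.} The plan is to run the classical compactness scheme --- uniform energy bounds, Aubin--Lions compactness, passage to the limit --- adapted to the two-parameter family, the key point being that every $\alpha_n$- and $\beta_n$-dependent correction is quantitatively controlled either by a weighted norm that is $O(\alpha_n)$ or $O(\beta_n)$, or by the bare Navier--Stokes dissipation, and therefore disappears in the limit. I would begin with the first-level energy identity of Lemma~\ref{lem:H1-energy}, which for the NS--$\alpha_n\beta_n$ system reads
\begin{equation*}
\tfrac12\tfrac{d}{dt}\bigl(\|u^{(n)}\|^2+\alpha_n^2\|\nabla u^{(n)}\|_{L^2}^2\bigr)
+\beta_n^2\,a(u^{(n)},u^{(n)})+\|\nabla u^{(n)}\|_{L^2}^2=0,
\end{equation*}
where I used $\langle\Lambda u,u\rangle=\|u\|^2+\alpha_n^2\|\nabla u\|_{L^2}^2$ for $u\in V^1$. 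Combining the G{\aa}rding inequality of Theorem~\ref{t:garding}, in the form $\beta_n^2 a(u^{(n)},u^{(n)})\ge \beta_n^2 c_0\|u^{(n)}\|_{H^2}^2-\beta_n^2\gamma_0\|u^{(n)}\|^2$, with Gronwall's lemma --- the prefactor $\beta_n^2\gamma_0$ being bounded --- yields bounds on $[0,T]$ that are uniform in $n$: $u^{(n)}$ bounded in $L^\infty(0,T;H)\cap L^2(0,T;V^1)$, $\alpha_n\nabla u^{(n)}$ bounded in $L^\infty(0,T;L^2)$, and $\beta_n\|u^{(n)}\|_{L^2(0,T;H^2)}$ bounded. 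In particular $\alpha_n^2\Delta u^{(n)}\to0$ in $L^\infty(0,T;H^{-1})$, $\beta_n^2 Au^{(n)}\to0$ in $L^2(0,T;(V^2)')$, and $\alpha_n^2\int_0^T\|\nabla u^{(n)}\|_{L^2}^2\,dt\to0$.

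\textit{Compactness.} Writing $v^{(n)}=\Lambda u^{(n)}$ and using the equation $\partial_t v^{(n)}=\Delta u^{(n)}-\beta_n^2 Au^{(n)}-B(\Lambda u^{(n)},u^{(n)})$, I would bound $\partial_t v^{(n)}$ uniformly in $n$ in a space of the form $L^{q}(0,T;(V^{m})')$ with some $q>1$ and $m$ large: the term $\Delta u^{(n)}$ is controlled by Step~1, the term $\beta_n^2 Au^{(n)}$ tends to $0$ there, and the convective term is handled by integrating by parts so as to transfer one derivative onto the test field --- no boundary contributions appear since $u^{(n)}|_{\partial\Omega}=0$ --- after which three-dimensional Sobolev embeddings give the bound. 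Since $v^{(n)}-u^{(n)}=-\alpha_n^2\Delta u^{(n)}\to0$ in $L^\infty(0,T;H^{-1})$, the same bound holds for $\partial_t u^{(n)}$; the Aubin--Lions--Simon lemma, applied with $V^1\hookrightarrow\hookrightarrow H\hookrightarrow(V^{m})'$, then produces a subsequence with $u^{(n)}\to u$ strongly in $L^2(0,T;H)$. Together with the uniform bounds of Step~1 this gives $u^{(n)}\rightharpoonup u$ weakly in $L^2(0,T;V^1)$ and weak-$*$ in $L^\infty(0,T;H)$, and $u\in L^2(0,T;V^1)$, which encodes $u=0$ on $\partial\Omega$ in the trace sense.

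\textit{Passage to the limit.} Testing the abstract equation against $\phi\in\mathcal V$ and $\theta\in C_c^\infty(0,T)$: the $\beta_n^2$-term vanishes by Step~1; the time-derivative term converges because $\langle v^{(n)},\phi\rangle=\langle u^{(n)},\phi\rangle+\alpha_n^2(\nabla u^{(n)},\nabla\phi)\to\langle u,\phi\rangle$; the viscous term converges by weak convergence in $L^2(0,T;V^1)$. For the convective term, the integration by parts of Step~2, together with $\nabla\cdot u^{(n)}=\nabla\cdot\phi=0$, exhibits $\langle B(\Lambda u^{(n)},u^{(n)}),\phi\rangle=-\int_\Omega u^{(n)}_i u^{(n)}_j\,\partial_j\phi_i+R_n(\phi)$, where $R_n(\phi)$ is a finite sum of integrals of products of $\nabla u^{(n)}$ with $\nabla u^{(n)}$ or with $u^{(n)}$ against higher derivatives of $\phi$, each carrying a factor $\alpha_n^2$; using $\|u^{(n)}\|_{L^\infty(0,T;H)}\le C$ and $\alpha_n^2\int_0^T\|\nabla u^{(n)}\|_{L^2}^2\,dt\to0$ together with Cauchy--Schwarz, one gets $\int_0^T|R_n(\phi)|\,dt\to0$ for fixed $\phi$. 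The leading term converges since $u^{(n)}\to u$ strongly in $L^2(0,T;L^2)$, hence $u^{(n)}\otimes u^{(n)}\to u\otimes u$ in $L^1(0,T;L^1)$. Thus $u$ solves the incompressible Navier--Stokes equations in the sense of distributions with homogeneous Dirichlet data; $u\in C_{w}([0,T];H)$ and the convergences yield $u(0)=u_0$. Finally, integrating the energy identity gives $\|u^{(n)}(t)\|^2+2\int_0^t\|\nabla u^{(n)}\|_{L^2}^2\le\|u_0\|^2+\alpha_n^2\|\nabla u_0\|_{L^2}^2+2\beta_n^2\gamma_0\int_0^t\|u^{(n)}\|^2$; since $\alpha_n,\beta_n\to0$, $u_0\in V^1$, and (along a further subsequence) $u^{(n)}(t)\to u(t)$ strongly in $H$ for a.e.\ $t$, weak lower semicontinuity of the dissipation norm yields $\|u(t)\|^2+2\int_0^t\|\nabla u\|_{L^2}^2\le\|u_0\|^2$ for a.e.\ $t$, i.e.\ $u$ is a Leray--Hopf weak solution, cf.\ \cite{Temam}.

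\textit{Main obstacle.} The crux is checking that the weighted and the bare-dissipation bounds of Step~1 are \emph{sharp enough} to kill all corrections. For the fourth-order term this is tight: $\beta_n^2 Au^{(n)}$ sits in $L^2_t(V^2)'$ only with a $\beta_n^2$ weight on the $H^2$-norm of $u^{(n)}$, and it is exactly the algebraic gain $\beta_n^2\|u^{(n)}\|_{L^2_tH^2}=\beta_n\cdot\bigl(\beta_n\|u^{(n)}\|_{L^2_tH^2}\bigr)=O(\beta_n)$ that forces it to zero; for the convective corrections one relies analogously on $\alpha_n^2\int_0^T\|\nabla u^{(n)}\|_{L^2}^2\,dt\le\alpha_n^2 C\to0$. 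A secondary technical nuisance is that $\Lambda^{-1}$ is $n$-dependent, so one cannot estimate $\partial_t u^{(n)}$ directly from the equation; working with $v^{(n)}=\Lambda u^{(n)}$ and using $v^{(n)}-u^{(n)}\to0$ in $L^\infty_tH^{-1}$ circumvents this.
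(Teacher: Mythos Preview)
Your proposal is correct and follows essentially the same approach as the paper: uniform energy bounds from Lemma~\ref{lem:H1-energy}, Aubin--Lions compactness, and passage to the limit in the weak formulation, with the $\alpha_n$- and $\beta_n$-weighted terms vanishing. The paper presents only a brief ``idea of the proof'' sketch, whereas you supply considerably more detail---in particular the handling of $\partial_t u^{(n)}$ via $v^{(n)}=\Lambda u^{(n)}$, the explicit decomposition of the convective term into a principal part and an $O(\alpha_n^2)$ remainder, and the verification of the Leray--Hopf energy inequality---but the underlying strategy is the same.
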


\begin{proof}[Idea of the proof]
The energy estimates of Lemma~\ref{lem:H1-energy} are uniform in
$\alpha,\beta$ and yield
\begin{equation}
u^{(n)}\in L^\infty(0,T;H)\cap L^2(0,T;V^1),\qquad
\alpha_n\nabla u^{(n)}\in L^\infty(0,T;L^2(\Omega)^3),\qquad
\beta_n u^{(n)}\in L^2(0,T;V^2),
\end{equation}
with bounds independent of $n$ and $T$. By standard compactness arguments
(Aubin–Lions lemma) we obtain the stated convergences (up to a subsequence).
Passing to the limit in the weak formulation of the NS–$\alpha_n\beta_n$
equations is straightforward for the linear terms and uses the strong convergence
in $L^2_{\mathrm{loc}}$ for the nonlinear term. The higher-order boundary condition
degenerates in the limit, and only the classical no-slip condition $u=0$ on
$\partial\Omega$ remains. The limiting function $u$ thus satisfies the
Navier–Stokes equations in the Leray–Hopf sense.
\end{proof}

\section*{Acknowledgements}

The authors are grateful to Eliot Fried (OIST, Okinawa) for introducing them to this problem and for his encouragement.
This work was supported by NSERC Discovery Grants Program.

\end{document}